\documentclass{amsart}
\usepackage{latexsym,amscd,amssymb,url}
\usepackage{extarrows}
\usepackage{verbatim}
\pagestyle{headings}

\usepackage{dsfont}
\usepackage{tikz-cd}
\textwidth=450pt 
\oddsidemargin=12pt
\evensidemargin=12pt

\setlength{\footskip}{25pt}
\usepackage[all,cmtip]{xy}  
\usepackage{graphicx}
\usepackage{xcolor}
\usepackage{enumitem} 
\definecolor{dblue}{rgb}{0,0,.6}
\usepackage[colorlinks=true, linkcolor=dblue, citecolor=dblue, filecolor = dblue, menucolor = dblue, urlcolor = dblue]{hyperref}

\interfootnotelinepenalty=10000

\numberwithin{equation}{section}

%
\newtheorem{theorem}{Theorem}[section]

\theoremstyle{plain}

\newtheorem{claim}{Claim}

\newtheorem{corollary}[theorem]{Corollary}

\newtheorem{example}[theorem]{Example}

\newtheorem{lemma}[theorem]{Lemma}

\newtheorem{proposition}[theorem]{Proposition}
\newtheorem{remark}[theorem]{Remark}

\setcounter{tocdepth}{1}


\newcommand{\del}{\partial}
\newcommand{\Z}{\mathbb Z}
\newcommand{\Q}{\mathbb Q}
\newcommand{\A}{\mathbb A}

\newcommand{\C}{\mathbb C}

\newcommand{\im}{\operatorname{im}}

\newcommand{\Pic}{\operatorname{Pic}}

\newcommand{\Fix}{\operatorname{Fix}}

\newcommand{\id}{\operatorname{id}}

\newcommand{\Spec}{\operatorname{Spec}}

\newcommand{\pr}{\operatorname{pr}}
\newcommand{\NS}{\operatorname{NS}}

\newcommand{\codim}{\operatorname{codim}}

\newcommand{\rat}{\operatorname{rat}}

\newcommand{\Br}{\operatorname{Br}}

\newcommand{\CH}{\operatorname{CH}}

\newcommand{\cl}{\operatorname{cl}}

  \newcommand{\coker}{\operatorname{coker}}
  \newcommand{\Griff}{\operatorname{Griff}}  
     
\newcommand{\tors}{\operatorname{tors}}

\newcommand{\alg}{\operatorname{alg}}

\newcommand{\et}{\text{\'et}}

\newcommand{\dashedlongrightarrow}{\xymatrix@1@=15pt{\ar@{-->}[r]&}}
\renewcommand{\longrightarrow}{\xymatrix@1@=15pt{\ar[r]&}}
\renewcommand{\mapsto}{\xymatrix@1@=15pt{\ar@{|->}[r]&}}
\renewcommand{\twoheadrightarrow}{\xymatrix@1@=15pt{\ar@{->>}[r]&}}
\newcommand{\hooklongrightarrow}{\xymatrix@1@=15pt{\ar@{^(->}[r]&}}
\newcommand{\congpf}{\xymatrix@1@=15pt{\ar[r]^-\sim&}}
\renewcommand{\cong}{\simeq}

\begin{document}    
\title[Torsion in Griffiths Groups]{Torsion in Griffiths Groups}
\author{Theodosis Alexandrou} 
\address{Institut für Mathematik, Humboldt-Universität zu Berlin, Rudower Chaussee 25, 10099 Berlin, Germany}
\email{theodosis.alexandrou@hu-berlin.de} 
\date{\today} 
\subjclass[2020]{primary 14C25 ; secondary 14J29}
\keywords{Griffiths group, algebraic cycles, unramified cohomology, degenerations of surfaces.} 
 \begin{abstract} We show that for any integer $n\geq2$ there is a smooth complex projective variety $X$ of dimension $5$ whose third Griffiths group $\Griff^{3}(X)$ contains infinitely many torsion elements of order $n$. This generalises a recent theorem of Schreieder who proved the result for $n=2$.\end{abstract}
\maketitle 
\section{Introduction}
 The Griffiths group $\Griff^i(X)$ of a smooth complex projective variety $X$ is the group of nullhomologous codimension $i$ cycles modulo algebraic equivalence. For divisors and zero-cycles these two equivalence relations of algebraic cycles coincide and so $\Griff^i(X)$ is interesting only if $2\leq i \leq \dim X-1$.\par The terminology of this group is due to the famous example of Griffiths \cite{griffiths}, who showed that $\Griff^2(X)$ is not a torsion group for a very general quintic $X\subset\mathbb{P}^4_{\mathbb{C}}$. Clemens \cite{clemens} proved later that in this case $\Griff^2(X)$ has in fact infinite rank. Further examples of infinite dimensionality of $\Griff^i(X)\otimes \mathbb{Q}$ were constructed by Nori \cite{nori} and Voisin \cite{voisin-calabi-yau}.\par Totaro \cite{totaro-annals} showed that the vector space $\Griff^{i}(X)/\ell$ can be infinite dimensional for all prime numbers $\ell$ and all $2\leq i\leq \dim X-1$. This builds on earlier works of Schoen \cite{schoen-modn} and Rosenschon-Srinivas \cite{RS}, where there are constraints on the prime $\ell$. The result of Totaro also relies on Griffiths' method \cite{griffiths}, a theorem of Bloch-Esnault \cite{Bloch-Esnault} and Nori's approach \cite{nori} of the proof of Clemens' theorem.
 
 \par Schoen \cite{schoen-torsion} proved that the torsion subgroup of the Griffiths groups is generally not zero. Merkurjev--Suslin \cite{MS} used Bloch's map \cite{bloch-compositio} to show that the $n$-torsion in $\Griff^{2}(X)$ is always finite. Thus, it was a natural problem in the field whether $\Griff^{i}(X)$ for $i\geq3$ can have infinite $n$-torsion, cf. \cite{schoen-torsion}. This was recently solved by Schreieder \cite{Sch-griffiths}, who produced examples with infinite 2-torsion in their third Griffiths group. The question of infinite $\ell$-torsion for odd primes $\ell$ was left open. Building on Schreieder’s approach, in this paper, we solve the problem in the following strong sense:
 
 \begin{theorem}\label{thm:ell-torsion-griffiths} Let $n\geq2$ be an integer. There is a smooth complex projective variety $X$ of dimension 5 whose third Griffiths group $\Griff^{3}(X)$ contains infinitely many torsion elements of order $n$. Moreover, these elements are linearly independent modulo $n$, i.e. in $\Griff^{3}(X)/n$.\end{theorem}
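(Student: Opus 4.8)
The plan is to follow the blueprint that Schreieder used for $n=2$ and to run each step with $\mathbb Z/n$-coefficients for an arbitrary integer $n\ge2$; the geometric construction is then essentially a matter of bookkeeping, and all the new difficulty is concentrated in one cohomological computation. The first thing to fix is the source of $n$-torsion. In the case $n=2$ one uses a degeneration of surfaces whose geometry produces a non-trivial $2$-torsion class of geometric origin in cohomology (built, say, from Enriques-type surfaces, or from the combinatorics of a semistable degeneration). For arbitrary $n$ the first task is to produce, uniformly in $n$ and with no growth in dimension, a smooth projective surface $T$ — or a family of such over a curve — carrying a non-trivial $n$-torsion class in $H^3(T,\mathbb Z)$ (equivalently in its Brauer group); such surfaces exist for every $n$, e.g.\ among product-quotient surfaces or generalized Godeaux surfaces, where the torsion comes from a $\mathbb Z/n$-quotient of $\pi_1$ and is therefore visible through a degree-$n$ \'etale cover. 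The point is that $\dim T=2$ for every $n$, which is exactly why the dimension of the variety $X$ produced below remains $5$.

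Next, following Schreieder, I would spread this torsion cycle out in a family and assemble a $5$-fold. Concretely: take a flat projective family $\pi\colon\mathcal S\to B$ of surfaces over a smooth curve $B$ whose general fibre is a surface as above and which degenerates in a controlled way over finitely many points, and let $X$ be a smooth projective model of a suitable product — or fibre product over $B$ — of $\mathcal S$ with fixed auxiliary curves or surfaces, arranged so that $\dim X=5$. The construction then comes equipped with an explicit infinite sequence $Z_1,Z_2,\dots$ of codimension-$3$ cycles on $X$, one for each point $b_k$ of an infinite subset of $B$ (or of a set of torsion points of an abelian variety occurring in the construction): each $Z_k$ is a difference $\Gamma_{b_k}-\Gamma_{b_0}$ of ``fibrewise'' cycles, hence nullhomologous, it is killed by $n$ because the torsion class on the surfaces is, and it has order exactly $n$, inherited from $T$. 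So the theorem reduces to the statement that the classes of the $Z_k$ are $\mathbb Z/n$-linearly independent modulo algebraic equivalence, i.e.\ in $\Griff^3(X)/n$.

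To detect these classes I would invoke Schreieder's theory of refined unramified cohomology together with Bloch's map: it provides a functorial invariant $\Griff^3(X)/n\to H^{5}_{2,\mathrm{nr}}(X,\mathbb Z/n(3))$ — valued in the level of refined unramified cohomology that governs algebraic equivalence of codimension-$3$ cycles, and built out of the cohomology of the generic points of the surfaces contained in $X$ — which is compatible with the correspondence action and with specialization along $\pi$. Since the $Z_k$ lie over distinct points $b_k$ of the base curve and the degeneration is transverse there, a residue / leading-term computation along $\pi$ should exhibit, for each $k$, a separate $\mathbb Z/n$-summand, coming from the $n$-torsion cohomology of the surface over $b_k$, into which $Z_k$ maps non-trivially while every $Z_j$ with $j\ne k$ maps trivially. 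This yields the non-vanishing of the classes and their $\mathbb Z/n$-independence simultaneously, hence the theorem.

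The hard part will be exactly this computation: establishing the $\mathbb Z/n$-analogue of Schreieder's detection statement and checking that the $n$-torsion genuinely survives the specialization. Several features special to $n=2$ must be revisited. The sheaf $\mu_n$ is neither canonically trivial nor self-dual, so Tate twists and Bockstein operators have to be tracked throughout rather than suppressed; one needs the comparison between $\Griff^3(\,\cdot\,)/n$ and refined unramified cohomology, and the accompanying injectivity, with $\mathbb Z/n$- rather than $\mathbb Z/2$-coefficients; and one must arrange the degeneration $\pi\colon\mathcal S\to B$ so that the geometric $n$-torsion class extends over $B$ and contributes in the limit instead of being annihilated there. Once the $\mathbb Z/n$-version of the detection statement is in hand, the construction of $X$ and the independence argument should go through as in the case $n=2$.
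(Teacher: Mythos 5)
Your overall frame (a surface with $\Z/n$ in $\NS_{\tors}$ coming from a free $\Z/n$-quotient, a degeneration used to detect classes via refined unramified cohomology, and the observation that the surface keeps the dimension at $5$) matches the paper, but two essential ingredients are misidentified or missing. First, the source of the infinitely many independent classes. The paper does not take cycles $\Gamma_{b_k}-\Gamma_{b_0}$ indexed by points of the base of the degeneration, and the degeneration is \emph{not} part of the variety $X$: the example is the plain product $X=S\times JC$, where $S$ is a fixed surface and $JC$ is the Jacobian of a very general quartic curve. The infinitude comes from Totaro's theorem that $\CH^2(JC)/\ell$ is infinite (pullbacks of the Ceresa cycle under infinitely many isogenies), upgraded to ``infinitely many elements of order exactly $n$ in $\Griff^2(JC)/n$'' using the Merkurjev--Suslin finiteness of $\Griff^2[n]$. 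The key step is then an \emph{injectivity} statement: exterior product with the torsion class $\mathcal{L}\in\NS(S)_{\tors}$ injects $\Griff^2(JC)/n$ into $\Griff^3(S\times JC)[n]/n\Griff^3(S\times JC)[n^2]$. The degeneration of $S$ (over $\Spec\kappa[[t]]$, after spreading $JC$ out over a countable field) is only the tool for proving this injectivity, via Schreieder's Theorem 6.1. Your proposed detection — one $\Z/n$-summand per point $b_k$, separated by a residue computation along $\pi$ — is not how the independence is obtained, and there is no reason it would work: cycles supported over distinct fibres of a one-parameter family are not distinguished this way, and your $Z_k$ are not even shown to be non-zero.

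Second, the step you dispose of with ``such surfaces exist, e.g.\ among product-quotient surfaces or generalized Godeaux surfaces'' is where essentially all of the new work in the paper lives. One needs, for every $n$, not just a surface with $\NS(S)_{\tors}\cong\Z/n$, but one admitting a semistable degeneration such that (i) a Brauer class whose Bockstein generates $H^3(S_{\bar\eta},\Z_\ell(1))_{\tors}$ extends over the total space, and (ii) this class restricts to zero on every component of the special fibre. This is delicate: the paper explicitly notes that property (i) \emph{fails} for Persson's degenerations of Godeaux surfaces, so your suggested example is the one the paper rules out. The actual construction takes $S=Y/\varphi_n$ for $Y$ a $\varphi_n$-invariant $(n,n,n)$ complete intersection in $\mathbb{P}^5$, degenerates $Y$ to a complete intersection with a single $n$-fold point at the unique fixed point of $\varphi_n$, blows up, and quotients; for $n>2$ the component $W$ of the special fibre has positive geometric genus, so both (i) and (ii) require genuinely new arguments (computing $H^3(\mathcal{S},\Z_\ell(1))_{\tors}$ via proper base change and a chain of ruled components, and correcting the lift by an integral class to kill it on $W$). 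Without this construction the proof does not get off the ground for $n>2$.
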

 \par The above result implies that for any finite set \textbf{P} of prime numbers, there is a smooth complex projective $5$-fold $X$, such that the vector space $\Griff^{3}(X)[\ell^{\infty}]/\ell$ is infinite dimensional for all $\ell\in\textbf{P}$. 
 
 \par Taking products with projective spaces, we obtain the following generalization in higher dimensions. 
 \begin{corollary}\label{cor:ell-torsion-griffiths} For any integers $m\geq 5$ and $n\geq 2$, there is a smooth complex projective $m$-fold $X$ whose $j$-th Griffiths group $\Griff^{j}(X)$ contains infinitely many torsion elements of order $n$ for all $3\leq j\leq m-2$. Moreover, these elements are linearly independent in $\Griff^{j}(X)/n$.\end{corollary}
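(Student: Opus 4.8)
The plan is to deduce the corollary from Theorem~\ref{thm:ell-torsion-griffiths} by taking products with projective spaces, using only the elementary functorialities of Chow groups. So I would fix the smooth complex projective $5$-fold $X_0$ furnished by Theorem~\ref{thm:ell-torsion-griffiths}, together with the infinitely many classes $z_1,z_2,\dots\in\Griff^{3}(X_0)$ of order $n$ that are linearly independent in $\Griff^{3}(X_0)/n$. Given $m\geq 5$, I set $N:=m-5$ and $X:=X_0\times\CP^{N}$, a smooth complex projective $m$-fold, with projections $p\colon X\to X_0$ and $q\colon X\to\CP^{N}$ and relative hyperplane class $h:=q^{*}c_{1}(\OO_{\CP^{N}}(1))\in\CH^{1}(X)$, so that $p_{*}(h^{N})=[X_0]$ and $p_{*}(h^{i})=0$ for $0\leq i<N$.

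Next, for each $j$ with $3\leq j\leq m-2$ I would put $a:=j-3\in\{0,\dots,N\}$ and consider the homomorphisms $\iota_j\colon\CH^{3}(X_0)\to\CH^{j}(X)$ defined by $\iota_j(z):=p^{*}z\cdot h^{a}$ and $r_j\colon\CH^{j}(X)\to\CH^{3}(X_0)$ defined by $r_j(\beta):=p_{*}(\beta\cdot h^{N-a})$. The projection formula together with the identities for $p_{*}(h^{i})$ above yields $r_j\circ\iota_j=\id$. Since flat pull-back, proper push-forward and capping with the first Chern class of a line bundle all preserve homological as well as algebraic equivalence, and since $[p^{*}z\cdot h^{a}]=p^{*}[z]\cup h^{a}$ vanishes in $H^{*}(X)$ whenever $[z]=0$ by the Künneth formula, both $\iota_j$ and $r_j$ descend to homomorphisms $\iota_j\colon\Griff^{3}(X_0)\to\Griff^{j}(X)$ and $r_j\colon\Griff^{j}(X)\to\Griff^{3}(X_0)$ still satisfying $r_j\circ\iota_j=\id$. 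Hence $\iota_j$ is a split injection, realising $\Griff^{3}(X_0)$ as a direct summand of $\Griff^{j}(X)$. A split injection of abelian groups preserves the exact order of any torsion element and remains injective after tensoring with $\Z/n$; therefore $\iota_j(z_1),\iota_j(z_2),\dots$ is an infinite family of elements of $\Griff^{j}(X)$ of order exactly $n$ which are linearly independent in $\Griff^{j}(X)/n$. Letting $j$ range over $\{3,\dots,m-2\}$ completes the argument.

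I do not expect a genuine obstacle: essentially all of the content sits in Theorem~\ref{thm:ell-torsion-griffiths}, and the remaining ingredients — the compatibility of $p^{*}$, $p_{*}$ and $\,\cdot\,h$ with homological and algebraic equivalence, and the Künneth computation showing that $\iota_j$ takes values in the nullhomologous cycles — are standard. The only point that deserves a little care is to define $\iota_j$ and $r_j$ through the fixed class $h$ rather than through an actual linear subspace of $\CP^{N}$, so that they are manifestly well defined on cycles and no transversality or moving-lemma input is required.
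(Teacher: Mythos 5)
Your proposal is correct and is essentially the paper's argument: the paper simply invokes the projective bundle formula for $X=S\times JC\times\mathbb{P}^{m-5}_{\C}$, and your maps $\iota_j$ and $r_j$ are exactly the split injection and retraction onto the summand $\Griff^{3}(X_0)\cdot h^{j-3}$ that this formula provides, spelled out by hand. The verification that the splitting descends to Griffiths groups and preserves exact order and independence modulo $n$ is carried out correctly.
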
 
 
 \par The examples in Theorem \ref{thm:ell-torsion-griffiths} are given by a product $X:=S\times JC$, where $S$ is a $\Z/n$-quotient of a carefully chosen smooth complete intersection of multidegree $(n,n,n)$ in $\mathbb{P}^{5}_{\C}$ (see $\S$\ref{sec:cyclic-quotient}) and $C$ is a very general genus $3$ curve. The surface $S$ is somewhat special as such admits an exceptional degeneration, see $\S \ref{sb:degenerations}$ below. For example,if $n=2$, then $S$ is an Enriques surface (as used in \cite{Sch-griffiths}) that degenerates into a flower pot. For $n>2$, the surface $S$ is of general type; its geometric genus is positive and grows polynomially in $n$, see Proposition \ref{prp:invariants-X}.
 
 \par Let $Y$ be a smooth complex projective variety and let $\NS(Y):=\Pic(Y)/\Pic^{0}(Y)$ denote its N\'eron Severi group. The following injectivity result is crucial for us.
 
 \begin{theorem}\label{thm:injectivity-result} Let $n\geq2$ be an integer. Let $Z$ be a complex smooth projective variety. Then there is a complex smooth projective surface $S$ with $\NS(S)_{\tors}\cong\Z/n$, such that the exterior product map \begin{align}\label{ext-prod-griffiths}\Griff^{j}(Z)/n\longrightarrow\frac{\Griff^{j+1}(S\times Z)[n]}{n\Griff^{j+1}(S\times Z)[n^{2}]},\ [z]\mapsto [\mathcal{L}\times z]\end{align} is injective, where $0\neq\mathcal{L}\in\NS(S)_{\tors}$ is any generator.\end{theorem}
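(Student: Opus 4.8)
The plan is to construct, for every smooth projective variety $Z$, an explicit left inverse of the exterior product map \eqref{ext-prod-griffiths}; injectivity follows formally. The left inverse will be read off from an exceptional semistable degeneration of $S$, which is the real content. First I would fix $S$ as in \S\ref{sec:cyclic-quotient}: it is the quotient of a smooth complete intersection $T\subset\mathbb{P}^5_\C$ of multidegree $(n,n,n)$ by a free $\Z/n$-action. Since a complete intersection of dimension $\geq 2$ is simply connected, $\pi_1(S)=\Z/n$, so $H^1(S,\Z)=0$, $\Pic^0(S)=0$, and, torsion classes in $H^2$ being algebraic, $\NS(S)_{\tors}=H^2(S,\Z)_{\tors}\cong\Z/n$ by the universal coefficient theorem. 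A generator $\mathcal{L}$ is the class of the \'etale cover $T\to S$, and since $\Pic(S)=\NS(S)$ it is killed by $n$; hence, writing $p_S,p_Z$ for the projections from $S\times Z$, the cycle $\mathcal{L}\times z=p_S^{*}\mathcal{L}\cdot p_Z^{*}z$ is nullhomologous and $n(\mathcal{L}\times z)=p_S^{*}(n\mathcal{L})\cdot p_Z^{*}z=0$, so \eqref{ext-prod-griffiths} is at least well defined --- and the quotient by $n\,\Griff^{j+1}(S\times Z)[n^{2}]$ in its target is exactly what makes it so.

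The geometric input is the degeneration from \S\ref{sb:degenerations}: $S$ fits into a semistable family $\pi\colon\mathcal{S}\to\Delta$ over a disc whose special fibre $\mathcal{S}_0$ is a ``flower pot'', a reduced normal crossing surface. What I would extract from its geometry is that, after extending $\mathcal{L}$ to a class $\widetilde{\mathcal{L}}$ on the smooth total space $\mathcal{S}$, its restriction to a distinguished double curve $D\subset\mathcal{S}_0$ carries a \emph{nonzero} $n$-torsion gluing residue $\delta\in\Pic(D)[n]$. The mechanism: $n\widetilde{\mathcal{L}}$ is supported on the components of $\mathcal{S}_0$, so $\widetilde{\mathcal{L}}$ is $n$-torsion on each stratum away from the double locus; the flower pot is arranged so that the residual transition class along $D$ does not vanish. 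This is the ``exceptional'' feature of $S$ from the introduction, and the reason a good reduction would not suffice.

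Base-changing by $Z$ yields $\pi_Z\colon\mathcal{S}\times Z\to\Delta$ with smooth total space and normal crossing special fibre $\mathcal{S}_0\times Z$. Following Schreieder \cite{Sch-griffiths}, I would build the detecting invariant as follows: spread out a nullhomologous $n$-torsion cycle $w$ on the generic fibre $S\times Z$ over a small disc, and, working with the Bloch-Ogus (Gersten) complex over $\Delta$ --- that is, with refined unramified cohomology --- take the residue of its Bloch class along the special fibre, then along the stratum $D\times Z$, and extract the part governed by $\delta$. The claim is that this yields a well-defined homomorphism
\[
\Psi_Z\colon \frac{\Griff^{j+1}(S\times Z)[n]}{n\,\Griff^{j+1}(S\times Z)[n^{2}]} \to \Griff^{j}(Z)/n .
\]
Granting this, the theorem is immediate: $\mathcal{L}\times z$ spreads out to $\widetilde{\mathcal{L}}\times z$ with $z$ constant in the $Z$-direction, so its residue along $D\times Z$ equals $\delta$ times the class of $z$; hence $\Psi_Z(\mathcal{L}\times z)=\pm[z]$ in $\Griff^{j}(Z)/n$, so $\pm\Psi_Z$ is a left inverse of \eqref{ext-prod-griffiths}.

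The main obstacle is the well-definedness of $\Psi_Z$: one must show it is independent of the chosen spreading out and that it annihilates rationally trivial cycles, algebraically trivial cycles, and the subgroup $n\,\Griff^{j+1}(S\times Z)[n^{2}]$. Killing algebraic equivalence is the delicate point --- it requires that algebraic equivalence on the generic fibre specialises to algebraic equivalence on the strata of $\mathcal{S}_0$, which one proves by spreading out the auxiliary parametrising curves over $\Delta$. All three vanishings rely on the fine combinatorial structure of the flower pot, engineered precisely so that the relevant Gersten differentials annihilate the classes in play and so that $\widetilde{\mathcal{L}}$ has the asserted nonzero residue on $D$. A last, routine task is to track the K\"unneth and Tor contributions modulo $n$ --- harmless over $\C$ once the Tate twists are normalised --- which is what pins the output of $\Psi_Z$ down to $[z]$ itself rather than a nonzero multiple of it.
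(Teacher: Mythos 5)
Your high-level strategy --- detect $[z]$ by means of an exceptional semistable degeneration of $S$, following \cite{Sch-griffiths} --- is indeed the paper's strategy, and your reduction of well-definedness of a hypothetical retraction $\Psi_Z$ to killing rational equivalence, algebraic equivalence and $n\,\Griff^{j+1}[n^2]$ correctly names the hard points. But there is a genuine gap: you have misidentified the geometric object that does the detecting. The paper's mechanism (Schreieder's Theorem 6.1, quoted here as Theorem \ref{thm:injectivity-exterior-product-a}) runs on a \emph{Brauer class} $\alpha\in\Br(\mathcal{S})[n]$ on the total space satisfying two properties: (P1) $\delta(\alpha|_{S_{\bar{\eta}}})$ generates $H^{3}(S_{\bar{\eta}},\Z_{\ell}(1))_{\tors}\cong\Z/n$ (so $\alpha$ is Poincar\'e-dual to $\mathcal{L}$, not an extension of it), and (P2) $\alpha|_{S_{0i}}=0$ on \emph{every} component of the special fibre. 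Producing this class is the entire content of Sections 4--5 (Theorems \ref{thm:degeneration-X_{1,2,3}}, \ref{thm:extending-brauer-classes-2}, \ref{thm:extend-Brauer-class}); for $n>2$ the special fibre is not a flower pot but a chain $V\cup W$ with $V=\mathbb{P}^{2}$ and $W$ of general type, and one must prove $H^{3}(W,\Z_{\ell}(1))=0$ and correct the lift by classes in $H^{2}(\mathcal{S},\Z_{\ell}(1))$ to kill $\alpha|_{W}$. Your substitute --- a ``nonzero $n$-torsion gluing residue $\delta\in\Pic(D)[n]$'' of an extension $\widetilde{\mathcal{L}}$ of $\mathcal{L}$ along a double curve --- is a different (degree-one) object, and the recipe ``take the residue of the Bloch class along $D\times Z$ and extract the part governed by $\delta$'' is not a construction: no pairing is specified, and the computation $\Psi_Z(\mathcal{L}\times z)=\pm[z]$ is asserted rather than derived. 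In the paper this computation is precisely where (P1) and (P2) enter, and they point in the opposite direction from your narrative: the counterintuitive feature is that $\alpha$ extends \emph{unramified} across the family and dies on the components, not that some residue is nonzero.

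Two further omissions. First, the degeneration lives over $\Spec\kappa[[t]]$ with $\kappa$ a countable algebraically closed field, while $Z$ is over $\C$; one needs the spreading-out/specialisation argument of Theorem \ref{thm:injectivity-result-2} (reduce to $Z_{\kappa}$, apply the injectivity over $k=\overline{\kappa((t))}$, then embed $k\subset\C$) to transfer the statement, together with the preliminary reduction showing the kernel of the exterior product lies in $E^{j}_{n}(Z)$ and the inclusion $\Griff^{j}(Z)/n\subset E^{j}_{n}(Z)$. Second, the target $\Griff^{j+1}(S\times Z)[n]/n\Griff^{j+1}(S\times Z)[n^{2}]$ is obtained from the injectivity into $A^{j+1}(S\times Z)/n$ via Corollary \ref{cor:trivial-abel-jacobi-invariant}; it is a consequence of the argument, not (as you suggest) merely what makes the map well defined.
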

 
Theorem \ref{thm:ell-torsion-griffiths} follows from Theorem \ref{thm:injectivity-result} because the Jacobian of a very general genus $3$ curve $C$ has the property that $\Griff^{2}(JC)/n$ has infinitely many elements of order $n$ by a recent result of Totaro \cite{totaro-annals} and the Merkurjev--Suslin theorem \cite{MS} (see Theorem \ref{thm:infinite-modulo-n}).

\subsection{Special degenerations of surfaces}\label{sb:degenerations} Apart from a general injectivity result that works for all primes, the second main ingredient in \cite{Sch-griffiths} was the construction of degenerations of Enriques surfaces such that the Brauer class of the geometric generic fibre extends throughout the family, while the special fibre splits up into several components that are all rational and hence the Brauer class restricts to zero on the components of the special fibre. These degenerations of Enriques surfaces are known in the literature as flower-pot, see \cite[Proposition 3.3.1(3)]{persson}. They are exceptional as such do not admit (up to birational equivalence) an \'etale 2:1 cover by a Kulikov degeneration (see \cite{kulikov}) of $K3$ surfaces.

\par The main task within this paper is thus, to generalize the above construction to other surfaces. Namely, to prove Theorem \ref{thm:injectivity-result} we need to exhibit for every $n\geq2$, a surface $S$ with $\NS(S)_{\tors}\cong\Z/n$ that admits a similar degeneration, i.e. there is a Brauer class $\alpha\in\Br(S)[n]$, whose image via the Bockstein map generates the torsion in $\bigoplus_{\ell}H^{3}(S,\Z_{\ell}(1))$ and extends across the whole family, while its restriction to the components of the special fibre is zero. This is achieved by the following result.

\begin{theorem}\label{thm:extend-Brauer-class} Let $\kappa$ be an algebraically closed field of characteristic zero and let $n\geq2$ be an integer. There is a regular flat projective scheme $\mathcal{S}\to\Spec\kappa[[t]]$ such that:\begin{enumerate}
    \item\label{itm:generic-fibre} the geometric generic fibre $S_{\bar{\eta}}$ is a smooth projective surface with $\NS(S_{\bar{\eta}})_{\tors}\cong\Z/n$.
    \item\label{itm:special-fibre} the special fibre $S_{0}=\sum_{i=1}^{r}S_{0i}$ is a reduced simple normal crossing divisor whose dual graph $\Gamma$ is a chain, i.e. the double intersection $S_{0i}\cap S_{0i+1}$ is a smooth and irreducible curve while all intersections $S_{0i}\cap S_{0j}$ with $j\notin\{i-1,i,i+1\}$ are empty.
    \item\label{it:extend-brauer-class} there is a class $\alpha\in\Br(\mathcal{S})[n]$, such that\begin{enumerate}
        \item \label{itm:extend-class}$\delta(\alpha|_{S_{\bar{\eta}}})\in \bigoplus_{\ell}H^{3}(S_{\bar{\eta}},\Z_{\ell}(1))_{\tors}\cong\Z/n$ is a generator, where $\delta$ is the Bockstein map defined on the Brauer group $\Br(S_{\bar{\eta}})$ and such that \item\label{itm:rest-zero} for any component $S_{0i}$ of the special fibre, $\alpha|_{S_{0i}}=0\in\Br(S_{0i})$.\end{enumerate}\end{enumerate}\end{theorem}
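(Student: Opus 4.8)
\emph{Construction of the family.} The plan is to realise $\mathcal{S}$ as a resolution of a cyclic quotient of a $\Z/n$-equivariant one-parameter degeneration of $(n,n,n)$-complete intersections; for $n=2$ this should reproduce the flower-pot pencil of \cite{Sch-griffiths}. Fix a primitive $n$-th root of unity $\zeta\in\kappa$ and let $\Z/n$ act diagonally on $\mathbb{P}^{5}=\Proj\kappa[x_{0},\dots,x_{5}]$ with a weight vector (for instance $(1,1,1,0,0,0)$) for which the fixed locus of every non-trivial element is a disjoint union of two planes $L_{1},L_{2}$. I will write down three $\Z/n$-equivariant forms $F_{i}=tG_{i}+H_{i}$ ($i=1,2,3$) of degree $n$ over $\kappa[t]$, with $G_{i}$ general equivariant and $H_{i}$ a product of equivariant forms of smaller degree adapted to the weights, such that: over a punctured disc the surface $\tilde{S}_{t}=\{F_{1}=F_{2}=F_{3}=0\}$ is smooth and misses $L_{1}\cup L_{2}$, hence carries a free $\Z/n$-action; the total space $\tilde{\mathcal{X}}=\{F_{1}=F_{2}=F_{3}=0\}\subset\mathbb{P}^{5}_{\kappa[[t]]}$ is normal and flat over $\kappa[[t]]$; and the special fibre $\tilde{X}_{0}=\{H_{1}=H_{2}=H_{3}=0\}$ is a reduced divisor with rational components, permuted by $\Z/n$ together with their double curves, along which the action fixes only rational curves. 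After a ramified base change $t\mapsto t^{e}$ if necessary, take $\mathcal{S}$ to be a projective resolution of the cyclic-quotient singularities of $\tilde{\mathcal{X}}/(\Z/n)$ lying over the closed point; this is regular, flat and projective over $\Spec\kappa[[t]]$ with geometrically connected two-dimensional fibres.

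\emph{The two fibres.} The geometric generic fibre is $S_{\bar\eta}=\tilde{S}_{\bar\eta}/(\Z/n)$ with $\tilde{S}_{\bar\eta}$ a smooth complete intersection surface, hence simply connected by the Lefschetz hyperplane theorem; as the action is free, $\pi_{1}(S_{\bar\eta})\cong\Z/n$, so $\bigoplus_{\ell}H^{2}_{\et}(S_{\bar\eta},\Z_{\ell}(1))_{\tors}\cong\Z/n$, and torsion classes being algebraic this gives $\NS(S_{\bar\eta})_{\tors}\cong\Z/n$, i.e.\ \ref{itm:generic-fibre}. The special fibre of $\tilde{\mathcal{X}}/(\Z/n)$ is computed component-by-component, and since the resolution only inserts $\mathbb{P}^{1}$-bundles over rational curves, a combinatorial analysis of the dual graph (using the prescribed $\Z/n$-action on $\tilde{X}_{0}$) shows that $S_{0}=\sum_{i=1}^{r}S_{0i}$ is a reduced SNC divisor whose dual graph is a chain, i.e.\ \ref{itm:special-fibre}; moreover each $S_{0i}$ is a smooth projective rational surface, so $\Br(S_{0i})=0$.

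\emph{The Brauer class.} By proper base change over the henselian base, $H^{\bullet}_{\et}(\mathcal{S},\mu_{n})\cong H^{\bullet}_{\et}(S_{0},\mu_{n})$ and $H^{\bullet}_{\et}(\mathcal{S},\Z_{\ell}(1))\cong H^{\bullet}_{\et}(S_{0},\Z_{\ell}(1))$; via the Mayer--Vietoris complex of the chain $S_{0}$, the latter groups in degree $3$ are cokernels of restriction maps $\bigoplus_{i}H^{2}(S_{0i})\to\bigoplus_{j}H^{2}(S_{0i}\cap S_{0i+1})$. The choice of the $H_{i}$ is arranged so that these cokernels carry torsion $\cong\Z/n$ and, moreover, so that the specialisation map $H^{3}_{\et}(S_{0},\Z_{\ell}(1))_{\tors}\to H^{3}_{\et}(S_{\bar\eta},\Z_{\ell}(1))_{\tors}$ hits a generator. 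One then picks $\alpha\in\Br(\mathcal{S})[n]$ — using the Kummer sequence on the regular scheme $\mathcal{S}$ — lifting a class of $H^{3}_{\et}(\mathcal{S},\Z(1))$ whose image generates $\bigoplus_{\ell}H^{3}_{\et}(S_{\bar\eta},\Z_{\ell}(1))_{\tors}\cong\Z/n$; then \ref{itm:extend-class} holds by construction. Property \ref{itm:rest-zero} is immediate, since $\alpha|_{S_{0i}}\in\Br(S_{0i})=0$ for the rational surface $S_{0i}$. (For $n=2$ one can equivalently build $\alpha$ as the cup product of the class of the \'etale $\Z/2$-cover with the torsion line bundle, as in \cite{Sch-griffiths}; for odd $n$ this cup product vanishes, which is why the argument above is routed through the special fibre.)

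\emph{Main obstacle.} Essentially everything is concentrated in the first step: one must produce, for every $n\ge2$, an explicit equivariant degeneration of the $(n,n,n)$-complete intersection whose quotient is regular with special fibre a chain of \emph{rational} surfaces, while simultaneously the cyclic cover stays branched only over rational curves and the combinatorics of the chain forces the $n$-torsion in $H^{3}$ of the special fibre required above. As stressed in \cite{Sch-griffiths}, this covering degeneration is exceptional — it is not semistable and cannot be imported from the Kulikov-type classification of degenerations of surfaces — so it has to be constructed by hand from the weight vector and the factorisations $H_{i}$; this is the heart of the proof.
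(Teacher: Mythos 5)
Your overall architecture (an equivariant one\-parameter degeneration of $(n,n,n)$ complete intersections in $\mathbb{P}^{5}$, quotient by the free $\Z/n$\-action, proper base change plus Mayer--Vietoris on the special fibre to control $H^{3}$) is the same as the paper's. But there is a genuine gap at the point you yourself identify as the heart of the matter: you require the special fibre to be a chain of \emph{rational} surfaces, and this is impossible for $n\geq 3$. The geometric generic fibre $S_{\bar{\eta}}$ has $p_{g}=\frac{n^{2}}{4}(5n^{2}-18n+17)-1>0$ once $n\geq 3$, and in a semistable degeneration the $(2,0)$\-part of the limit mixed Hodge structure on $H^{2}$ is a subquotient of $\bigoplus_{i}H^{2}(S_{0i})$; hence at least one component of the special fibre must carry positive geometric genus and cannot be rational (nor ruled). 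The paper's construction makes this explicit: the special fibre is $V\cup W$ with $V=\mathbb{P}^{2}$ the image of the exceptional divisor over the unique fixed point, and $W$ a surface of general type with $g(W)=\frac{n^{2}}{4}(5n^{2}-18n+15)+\frac{3n}{2}-2$. Consequently your argument for \eqref{itm:rest-zero} --- ``$\alpha|_{S_{0i}}\in\Br(S_{0i})=0$ since $S_{0i}$ is rational'' --- collapses on $W$. What the paper does instead is prove $q(W)=0$ and $H^{3}(W,\Z_{\ell}(1))=0$ for $\ell\mid n$ by exhibiting $W$ (equivalently its cyclic cover $\tilde{Y}$) as a branched cover of a complete intersection in $\mathbb{P}^{4}$, and then \emph{corrects} the chosen lift $\alpha\in H^{2}(\mathcal{S},\mu_{n})$ by a class coming from $H^{2}(\mathcal{S},\Z_{\ell}(1))\otimes\Z/\ell^{r}$ (which does not change $\delta(\alpha)$) so that $\alpha|_{W}=0$; the surjectivity of $H^{2}(\mathcal{S},\Z_{\ell}(1))\to H^{2}(W,\Z_{\ell}(1))$ needed for this is itself a Mayer--Vietoris computation using $C\sim nD$ on $W$.

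A second, smaller but still substantive, gap is in your treatment of \eqref{itm:extend-class}. Producing a class of $H^{3}(\mathcal{S},\Z_{\ell}(1))$ restricting to a generator of $H^{3}(S_{\bar{\eta}},\Z_{\ell}(1))_{\tors}$ is the easy half; the point is that this lift must be chosen \emph{torsion} so that it is $\delta$ of a Brauer class on $\mathcal{S}$, and must moreover come from a class already defined on $\Br(\mathcal{S})$ rather than only on $\Br(S_{\eta})$. The paper first passes to a finite extension so that $\Br(S_{\eta})[n]\twoheadrightarrow\Br(S_{\bar{\eta}})[n]$, then performs the base change $t\mapsto t^{n^{2}}$ precisely so that the resulting chain of ruled components $R_{1},\dots,R_{N}$ has $N+1$ divisible by a high power of each $\ell\mid n$; the obstruction to choosing a torsion lift is a class in $J(C)[\ell^{r}]$ (computed via the tridiagonal intersection map $\Phi$ in Lemmas 5.3--5.5), which then becomes $\ell^{r_{i}}$\-divisible and can be absorbed. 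Your ``after a ramified base change $t\mapsto t^{e}$ if necessary'' gestures at this but does not identify what the base change is for, and ``one then picks $\alpha\in\Br(\mathcal{S})[n]$ lifting a class of $H^{3}$'' asserts exactly what needs to be proved.
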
 

\par The existence of the above degenerations seems to be slightly counter intuitive as one typically expects that Brauer classes ramify when the varieties break up into components. Their construction is inspired by the one of flower pot of Enriques that was given by Persson, see \cite[Appendix 2]{persson}.\par It would be interesting to know if Theorem \ref{thm:extend-Brauer-class} holds for some degenerations of some well-known surfaces e.g. Godeaux surface. For instance, the property \eqref{itm:extend-class} completely fails (see \cite[Lemma 5.3]{theo}) for the degenerations of Godeaux surfaces constructed in \cite[Appendix 3]{persson}.

\subsection{Outline of the main arguments} We briefly explain some of the main steps of the proof of Theorem \ref{thm:extend-Brauer-class}. Let $Y\subset\mathbb{P}^{5}_{k}$ be a smooth complete intersection of multidegree $(n,n,n)$ that is invariant and fixed point free by the automorphism $\varphi_{n}$, see \eqref{eq:varphi}. Then the crucial step is to degenerate $Y$ to some special singular complete intersection $Y_{0}\subset\mathbb{P}^{5}_{\kappa}$ (see Lemma \ref{lem:resolution-Y'}) that is invariant by $\varphi_{n}$. The singular locus of $Y_{0}$ consists of a single point $p_{0}$ of multiplicity $n$ and the automorphism $\varphi_{n}$ fixes exactly this point in $Y_{0}$. The total space $\mathcal{Y}$ of this degeneration can be chosen to be regular and so blowing up $p_{0}$ in our family we obtain $\mathbb{P}^{2}_{\kappa}$ with multiplicity $n$. The automorphism $\varphi_{n}$ lifts naturally on the blow-up and we will see that in fact acts trivially on the exceptional divisor. Quotienting out yields the desired degeneration.  
        
\par For $n>2$, the property \eqref{itm:extend-class} is more subtle. In fact, the surface $S:=S_{\bar{\eta}}$ has positive geometric genus and thus a class $\alpha\in\Br(S)[n]$ with the property that $\delta(\alpha)$ generates the torsion in $\bigoplus_{\ell}H^{3}(S,\Z_{\ell}(1))$ is not canonically given any longer. Instead, the functoriality of the Bockstein map (see \eqref{eq:bockstein-sequence}) with respect to pull-backs reduces our task to showing $$H^{3}(\mathcal{S},\Z_{\ell}(1))_{\tors}\to H^{3}(S,\Z_{\ell}(1))_{\tors}$$ is surjective for all primes $\ell$ dividing $n$. One of the advantages of this reduction step is that we can apply the proper base change theorem to compute $H^{3}(\mathcal{S},\Z_{\ell}(1))$ (see Claim \ref{claim-3}) and in particular its torsion subgroup. The existence of a lift $\beta\in H^{3}(\mathcal{S},\Z_{\ell}(1))$ of a generator of the cyclic group $H^{3}(S,\Z_{\ell}(1))_{\tors}$ follows from the Gysin sequence. The intricate part of the proof is then to show that the class $\beta$ can be chosen to be torsion, so that it comes from a Brauer class on $\mathcal{S}$.\par The property \eqref{itm:rest-zero} of Theorem \ref{thm:extend-Brauer-class} is more involved than the case of flower pot of Enriques, i.e. $n=2$. This is because the special fibre $S_{0}$ contains a single component (say $W$), whose Brauer group is infinite if $n>2$ (see Proposition \ref{prop:geometric-genus-W}). We will be able to take care of this matter by adding some integral cohomology class to a lift $\tilde{\alpha}\in H^{2}(\mathcal{S},\mu_{n})$ of the Brauer class on $\mathcal{S}$ from \eqref{itm:extend-class}, i.e., $\tilde{\alpha}|_{W}=0$ up to adding a cohomology class from $H^{2}(\mathcal{S},\Z_{n}(1)):=\prod_{\ell|n}H^{2}(\mathcal{S},\Z_{\ell}(1))$. Note that this modification on the class $\alpha$ has no effect on its value via the Bockstein map (cf. \eqref{eq:bockstein-sequence}).

\section{Preliminaries}

\subsection{Notations}The $n$-torsion subgroup of an abelian group $G$ is denoted by $G[n]$. The subgroup $G[n^{\infty}]$ consists of the elements that are annihilated by some power of $n$. We also denote by $G_{div}$ the maximal divisible subgroup of $G$. If $\psi\colon H\to G$ is a homomorphism of abelian groups, then we denote by slight abuse of notation $G/H:=\coker(\psi)$.\par Let $k$ be a field. An algebraic scheme $X$ over $k$ is a separated scheme of finite type over $k$ and it is equi-dimensional if its irreducible components have all the same dimension. A variety over $k$ is an integral algebraic $k$-scheme. For an equi-dimensional algebraic $k$-scheme $X$ and any $0\leq i\leq \dim X$, we let $X^{(i)}:=\{x\in X|\codim_{X}(x):=\dim X-\dim\overline{\{x\}}=i\}$ be the set of all codimension $i$ points in $X$.\subsection{Borel-Moore cohomology}Fix a field $k$ and a prime $\ell$ invertible in $k$. For an algebraic $k$-scheme $X$ of dimension $d_{X}$ and $A\in\{\Z/\ell^{r},\Z_{\ell},\Q_{\ell},\Q_{\ell}/\Z_{\ell}\}$, we let $$H^{i}(X,A(n)):=H^{i}_{BM}(X,A(n))$$ be the twisted Borel-Moore pro-\'etale cohomology, as defined in \cite[(6.13)-(6.15)]{Sch-refined} (see \cite[Proposition 6.6]{Sch-refined}). The main properties of this functor are listed in \cite[\S4]{Sch-refined}.\par If $X$ is smooth and equi-dimensional, then by \cite[Lemma 6.5]{Sch-refined}, there are canonical isomorphisms \begin{align}\label{eq:borel-moore} H^{i}(X,\Z/\ell^{r}(n))\cong H^{i}(X_{\text{\'et}},\mu_{\ell^{r}}^{\otimes n})\ \text{and}\ H^{i}(X,\Z_{\ell}(n))\cong H^{i}_{cont}(X_{\text{\'et}},\Z_{\ell}(n)), \end{align} where $H^{i}_{cont}$ denotes Jannsen's continuous \'etale cohomology (see \cite{jannsen}).\par More generally, if $X$ admits a closed embedding $\iota\colon X\hookrightarrow Y$ into a smooth and equi-dimensional algebraic $k$-scheme $Y$ of dimension $d_{Y}$, then there is a canonical identification with continuous \'etale cohomology with support \begin{align}\label{eq:borel-moore-support} H^{i}(X,A(n))\cong H^{i+2c}_{X,cont}(Y,A(n+c)),\end{align} where $c=d_{Y}-d_{X}$ (see \cite[Remark 2.1]{th-sch} and \cite[Lemma A.1]{Sch-moving}).\subsection{Refined unramified cohomology}\label{sb:refined}For an algebraic $k$-scheme $X$, we have an increasing filtration: $$F_{0}X\subset F_{1}X\subset F_{2}X\subset\dots\subset F_{\dim X}X=X,$$ where $F_{j}X$ is given by the inverse limit of all open subsets $U\subset X$, with $\dim(X\setminus U)<\dim X-j$. Recall that for an open subset $U\subset X$, with $\dim U=\dim X$, there are restriction maps $H^{*}(X,A(n))\to H^{*}(U,A(n))$ and one defines $$H^{*}(F_{j}X,A(n)):=\lim_{\substack{\longrightarrow\\ F_{j}X\subset U\subset X}}H^{*}(U,A(n)).$$Here the direct limit is taken over the open subsets $U\subset X$ that make up $F_{j}X$ (see \cite[\S 5]{Sch-refined}).\par If $m\geq j$, then we have restriction maps $H^{*}(F_{m}X,A(n))\to H^{*}(F_{j}X,A(n))$ and we set $$F^{m}H^{*}(F_{j}X,A(n)):=\im(H^{*}(F_{m}X,A(n))\to H^{*}(F_{j}X,A(n))).$$ \par We define the $j$-th refined unramified cohomology of $X$ with values in $A(n)$ by $$H^{*}_{j,nr}(X,A(n)):=F^{j+1}H^{*}(F_{j}X,A(n)).$$ \par For a scheme point $x\in X$, we write $H^i(x,A(n)):=H^i(F_0\overline{\{ x\}},A(n))$, where $\overline{\{ x\}}\subset X$ denotes the closure of $x$. Note that  $H^0(x,A(0))=A\cdot [x]$, where $[x]\in H^0(x,A(0))$ denotes the fundamental class of $x$, cf.\ \cite[(P3) in Definition 4.2 and Proposition 6.6]{Sch-refined}.

The Gysin sequence induces the following long exact sequence (see \cite[Lemma 5.8]{Sch-refined})
\begin{align} \label{eq:les}
 \longrightarrow
   H^i(F_jX,A(n))\longrightarrow   H^i(F_{j-1}X,A(n))  \stackrel{\del}\longrightarrow \bigoplus_{x\in X^{(j)}}H^{i+1-2j}(x,A(n-j)) \stackrel{\iota_\ast} \longrightarrow    H^{i+1}(F_jX,A(n)).
\end{align} 
We have $H^i(X,A(n))=H^i(F_jX,A(n))$ for all $j\geq \lceil i/2\rceil$ by \cite[Corollary 5.10]{Sch-refined}. Moreover, if $X$ is smooth and equi-dimensional $H^{*}_{0,nr}(X,A(n))$ coincides with the classical \cite{Sch-survey} unramified cohomology $H^{*}_{nr}(X,A(n))$.\subsection{Algebraic cycle groups}\label{sb:cycle-groups} Let $k$ be a field and let $\ell$ be a prime number invertible in $k$. For an algebraic $k$-scheme $X$ of dimension $d_{X}$, we denote by $\CH^{i}(X):=\CH_{d_{X}-i}(X)$ the Chow group of cycles of dimension $d_{X}-i$ on $X$. We set $\CH^i(X)_{\Z_\ell}:=\CH^i(X)\otimes_{\Z} \Z_\ell$. 
There is a cycle class map (see \cite[(7.1)]{Sch-refined})
$$
\cl^i_X:\CH^i(X)_{\Z_\ell}\longrightarrow H^{2i}(X,\Z_\ell(i)),
$$  
that is induced by the following pushforward map that appears in the above long exact sequence (\ref{eq:les}):
$$
 \bigoplus_{x\in X^{(i)}}H^{0}(x,\Z_\ell(0))= \bigoplus_{x\in X^{(i)}} \Z_\ell[x] \stackrel{\iota_\ast} \longrightarrow    H^{2i}(F_iX,\Z_\ell(i))=H^{2i}(X,\Z_\ell(i)).
$$
If $X$ is smooth and equi-dimensional, then the above cycle class map agrees with Jannsen's cycle class map in continuous \'etale cohomology from \cite{jannsen}, see \cite[Lemma 9.1]{Sch-refined}.

There is a natural coniveau filtration $N^\ast$ on $\CH^i(X)_{\Z_\ell}$, given by the condition that a cycle $[z]$ lies in $N^j$ if and only if  
there is a closed subset $Z\subset X$ of codimension $j$ such that $z$ is rationally equivalent to a homologously trivial cycle on $Z$, i.e.\  
$$
[z]\in \im(\ker(\cl_Z^{i-j})\longrightarrow \CH^i(X)_{\Z_\ell} ) ,
$$
see \cite[Definition 7.3]{Sch-refined}.
In view of \cite[Definition 7.2 and Lemma 7.4]{Sch-refined}, we define
$$
A^i(X)_{\Z_\ell}:=\CH^i(X)_{\Z_\ell}/N^{i-1}\CH^i(X)_{\Z_\ell} \ \ \text{and}\ \ A^i(X)[\ell^\infty]:=A^i(X)_{\Z_\ell}[\ell^\infty].
$$
If the field $k$ is algebraically closed, then $N^{i-1}\CH^i(X)_{\Z_\ell}=\CH^{i}(X)_{\alg}\otimes_{\Z}\Z_{\ell}$ by \cite[Lemma 7.5]{Sch-refined} and therefore in this case $A^i(X)_{\Z_\ell}=(\CH^i(X)/\sim_{\alg})\otimes_{\Z}{\Z_\ell}$.
\subsection{Brauer groups}\label{sb:brauer} For a scheme $X$, we let $\Br(X):=H^{2}(X_{\text{\'et}},\mathbb{G}_{m})$ be the Brauer group of $X$. If $n$ is a positive integer that is invertible on $X$, then the Kummer sequence yields a short exact sequence \begin{align}\label{eq:kummer} 0\longrightarrow\Pic(X)/n\overset{\cl^{1}}{\longrightarrow} H^{2}(X_{\text{\'et}},\mu_{n})\longrightarrow\Br(X)[n]\longrightarrow 0. \end{align} We also recall that $\Br(X)[n]=H^{2}_{nr}(X_{\text{\'et}},\mu_{n})$ for a regular variety $X$ (cf. \cite[Theorem 3.7.3]{CS-brauer}).\ \par Let $X$ be an algebraic scheme over a field $k$ and fix a prime number $\ell$ invertible in $k$. There is a long exact Bockstein sequence\begin{align}\label{eq:bockstein-sequence}
\dots\longrightarrow H^{i}(X,\Z_{\ell}(n))\overset{\times \ell^{r}}{\longrightarrow}H^{i}(X,\Z_{\ell}(n))\longrightarrow H^{i}(X,\Z/\ell^{r}(n))\overset{\delta}{\longrightarrow}H^{i+1}(X,\Z_{\ell}(n))\overset{\times \ell^{r}}{\longrightarrow}\dots\end{align} where $H^{i}(X,\Z_{\ell}(n))\longrightarrow H^{i}(X,\Z/\ell^{r}(n))$ is given by functoriality in the coefficients and where $\delta$ is called the Bockstein map (see \cite[(P5) in Definition 4.4]{Sch-refined}).
\begin{proposition}\label{prop:short-exact-sequence}Let $X$ be an algebraic scheme over a field $k$ and let $\ell$ be a prime invertible in $k$. Then the Bockstein map $\delta\colon H^{2}(X,\Q_{\ell}/\Z_{\ell}(1))\longrightarrow H^{3}(X,\Z_{\ell}(1))_{\tors}$ descends to a well-defined map $\delta\colon H^{2}_{0,nr}(X,\Q_{\ell}/\Z_{\ell}(1))\longrightarrow H^{3}(X,\Z_{\ell}(1))_{\tors}$ that fits into a short exact sequence \begin{align}\label{eq:short-exact-sequence}0\longrightarrow \frac{H^{2}_{0,nr}(X,\Q_{\ell}(1))}{H^{2}_{0,nr}(X,\Z_{\ell}(1))}\longrightarrow H^{2}_{0,nr}(X,\Q_{\ell}/\Z_{\ell}(1))\overset{\delta}{\longrightarrow} H^{3}(X,\Z_{\ell}(1))_{\tors}\longrightarrow 0.\end{align} \end{proposition}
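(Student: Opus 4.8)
The plan is to play the Bockstein sequence on $X$ against the realisation of $H^{2}_{0,nr}$ as a quotient, so I would begin with the relevant bookkeeping. Since $\lceil 2/2\rceil=1$ we have $H^{2}(X,A(1))=H^{2}(F_{1}X,A(1))$ for every $A\in\{\Z_{\ell},\Q_{\ell},\Q_{\ell}/\Z_{\ell}\}$, and the Gysin sequence \eqref{eq:les} then provides exactness of $\bigoplus_{x\in X^{(1)}}A\cdot[x]\xrightarrow{\iota^{A}_{\ast}}H^{2}(X,A(1))\to H^{2}(F_{0}X,A(1))$, where $\bigoplus_{x\in X^{(1)}}A\cdot[x]=\bigoplus_{x\in X^{(1)}}H^{0}(x,A(0))$ and $\iota^{A}_{\ast}$ is the Gysin pushforward. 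Hence $H^{2}_{0,nr}(X,A(1))=\im\big(H^{2}(X,A(1))\to H^{2}(F_{0}X,A(1))\big)$ is canonically the quotient of $H^{2}(X,A(1))$ by $\im(\iota^{A}_{\ast})$, and all of these identifications, the maps $\iota^{A}_{\ast}$, and the restriction maps to $F_{0}X$ are functorial in the coefficient group $A$.

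Next I would show that $\delta$ descends to $H^{2}_{0,nr}$ and stays surjective. The coefficient map $\Z_{\ell}(1)\to\Q_{\ell}/\Z_{\ell}(1)$ vanishes, as it factors through $\Q_{\ell}(1)$; hence, using functoriality of $\iota_{\ast}$ in $A$ together with the surjectivity of $\bigoplus_{x}\Q_{\ell}\cdot[x]\to\bigoplus_{x}\Q_{\ell}/\Z_{\ell}\cdot[x]$, the image of $\iota^{\Q_{\ell}/\Z_{\ell}}_{\ast}$ is contained in the image of the coefficient map $c\colon H^{2}(X,\Q_{\ell}(1))\to H^{2}(X,\Q_{\ell}/\Z_{\ell}(1))$, which equals $\ker\delta$ by \eqref{eq:bockstein-sequence}. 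Thus $\delta$ annihilates $\ker\big(H^{2}(X,\Q_{\ell}/\Z_{\ell}(1))\to H^{2}_{0,nr}(X,\Q_{\ell}/\Z_{\ell}(1))\big)$ and descends to the asserted map, whose image lies in $\ker\big(H^{3}(X,\Z_{\ell}(1))\to H^{3}(X,\Q_{\ell}(1))\big)=H^{3}(X,\Z_{\ell}(1))_{\tors}$; surjectivity onto this subgroup is inherited from the surjectivity of $\delta$ on $X$, again by \eqref{eq:bockstein-sequence}.

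It remains to compute the kernel of the descended map. Pushing the identity $\ker\delta=\im c$ forward along the surjection $H^{2}(X,\Q_{\ell}/\Z_{\ell}(1))\to H^{2}_{0,nr}(X,\Q_{\ell}/\Z_{\ell}(1))$ and using that restriction to $F_{0}X$ commutes with $c$, one identifies this kernel with the image of the induced map $\bar{c}\colon H^{2}_{0,nr}(X,\Q_{\ell}(1))\to H^{2}_{0,nr}(X,\Q_{\ell}/\Z_{\ell}(1))$. One then has to check that $\ker\bar{c}$ is the image of $H^{2}_{0,nr}(X,\Z_{\ell}(1))$: one inclusion is formal since $\Z_{\ell}(1)\to\Q_{\ell}/\Z_{\ell}(1)$ vanishes, and for the other I would run the following chase. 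Given $\xi\in H^{2}_{0,nr}(X,\Q_{\ell}(1))$ with $\bar{c}(\xi)=0$, lift it to $\tilde{\xi}\in H^{2}(X,\Q_{\ell}(1))$; then $c(\tilde{\xi})$ maps to $0$ in $H^{2}_{0,nr}(X,\Q_{\ell}/\Z_{\ell}(1))$, i.e.\ $c(\tilde{\xi})\in\im\iota^{\Q_{\ell}/\Z_{\ell}}_{\ast}=\im\big(c\circ\iota^{\Q_{\ell}}_{\ast}\big)$, so after subtracting from $\tilde{\xi}$ a Gysin class $\iota^{\Q_{\ell}}_{\ast}(v)$ — with $v$ a lift along $\bigoplus_{x}\Q_{\ell}\cdot[x]\to\bigoplus_{x}\Q_{\ell}/\Z_{\ell}\cdot[x]$ — I may assume $c(\tilde{\xi})=0$; then $\tilde{\xi}$ comes from $H^{2}(X,\Z_{\ell}(1))$ by \eqref{eq:bockstein-sequence}, and since $\iota^{\Q_{\ell}}_{\ast}(v)$ dies in $H^{2}_{0,nr}$ it follows that $\xi$ lies in the image of $H^{2}_{0,nr}(X,\Z_{\ell}(1))$. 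Hence $\im\bar{c}\cong H^{2}_{0,nr}(X,\Q_{\ell}(1))/\im\big(H^{2}_{0,nr}(X,\Z_{\ell}(1))\big)$, which with the convention that $G/H:=\coker(\cdot)$ is $\frac{H^{2}_{0,nr}(X,\Q_{\ell}(1))}{H^{2}_{0,nr}(X,\Z_{\ell}(1))}$, and assembling everything yields \eqref{eq:short-exact-sequence}.

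I expect the computation of $\ker\bar{c}$ to be the main obstacle, since exactness here is not formal: the operation of taking the image of a map does not preserve exact sequences. The argument genuinely uses both the low-degree vanishing $H^{2}(X,-)=H^{2}(F_{1}X,-)$ — which is what makes $H^{2}_{0,nr}(X,-)$ a genuine quotient of $H^{2}(X,-)$ rather than only a subquotient — and the explicit Gysin description of the relevant kernels together with the surjectivity of $\bigoplus_{x\in X^{(1)}}\Q_{\ell}\cdot[x]\to\bigoplus_{x\in X^{(1)}}\Q_{\ell}/\Z_{\ell}\cdot[x]$.
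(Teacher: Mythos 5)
Your proof is correct and in substance follows the same strategy as the paper: use $H^{2}(X,A(1))=H^{2}(F_{1}X,A(1))$ to realise $H^{2}_{0,nr}(X,A(1))$ as the quotient of $H^{2}(X,A(1))$ by the image of the Gysin pushforward from codimension-one points, and then compare the Bockstein sequence with its image in that quotient. The differences are in execution rather than in the ideas. The paper works at finite level $\Z/\ell^{r}$, proves the exactness of \eqref{eq:short-exact-sequence-2} via a commutative diagram and the snake lemma, and only passes to the direct limit over $r$ at the end; you work directly with divisible coefficients and replace the snake lemma by an explicit chase. For the descent of $\delta$, the paper uses the compatibility $\delta\circ\iota_{*}=\iota_{*}\circ\delta$ together with the torsion-freeness of $H^{1}(x,\Z_{\ell}(0))$ (Lemma 5.13 of the cited reference), whereas you use the liftability of $\bigoplus_{x}\Q_{\ell}/\Z_{\ell}\cdot[x]$ to $\bigoplus_{x}\Q_{\ell}\cdot[x]$ and the compatibility of $\iota_{*}$ with change of coefficients; both are legitimate, and your variant has the small advantage of not needing that torsion-freeness statement. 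One point you should make explicit: you repeatedly invoke the exactness of $H^{2}(X,\Z_{\ell}(1))\to H^{2}(X,\Q_{\ell}(1))\overset{c}{\to} H^{2}(X,\Q_{\ell}/\Z_{\ell}(1))\overset{\delta}{\to}H^{3}(X,\Z_{\ell}(1))\to H^{3}(X,\Q_{\ell}(1))$, i.e.\ the long exact sequence attached to $0\to\Z_{\ell}(1)\to\Q_{\ell}(1)\to\Q_{\ell}/\Z_{\ell}(1)\to0$ (together with the identification of $\ker\bigl(H^{3}(X,\Z_{\ell}(1))\to H^{3}(X,\Q_{\ell}(1))\bigr)$ with the torsion subgroup). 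This is slightly more than the quoted finite-level sequence \eqref{eq:bockstein-sequence}; it is available in the framework of refined unramified cohomology, and is in any case implicitly used by the paper when it passes from \eqref{eq:short-exact-sequence-2} to \eqref{eq:short-exact-sequence}, but it deserves a citation rather than a bare appeal to \eqref{eq:bockstein-sequence}.
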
\begin{proof} Pick a class $\alpha\in H^{2}(X,\Z/\ell^{r}(1))$ whose image in $H^{2}_{0,nr}(X,\Z/\ell^{r}(1))$ is zero. It follows from the Gysin sequence \eqref{eq:les} that $\alpha=\iota_{*}\xi$ for some $\xi\in\bigoplus_{x\in X^{(1)}}\Z/\ell^{r}\cdot[x]$. By functoriality of the Bockstein sequence \eqref{eq:bockstein-sequence} with respect to proper pushforwards (see \cite[(P5) in Definition 4.4]{Sch-refined}) we obtain $\delta(\iota_{*}\xi)=\iota_{*}\delta(\xi)$. Note that the element $\delta(\xi)\in \bigoplus_{x\in X^{(1)}}H^{1}(x,\Z_{\ell}(0))$ is torsion. The group $H^{1}(x,\Z_{\ell}(0))$ being torsion free (see \cite[Lemma 5.13]{Sch-refined}) thus yields $\delta(\xi)=0$. In particular, the Bockstein map induces a well-defined map $\delta\colon H^{2}_{0,nr}(X,\Z/\ell^{r}(1))\longrightarrow H^{3}(X,\Z_{\ell}(1))[\ell^{r}]$ and taking the direct limit over $r$ one gets $\delta\colon H^{2}_{0,nr}(X,\Q_{\ell}/\Z_{\ell}(1))\longrightarrow H^{3}(X,\Z_{\ell}(1))_{\tors}$.\par Consider the following commutative diagram with exact rows \begin{center}\begin{tikzcd}
0 \arrow[r] & {H^{2}(X,\Z_{\ell}(1))/\ell^{r}} \arrow[r] \arrow[d, "\pi_{1}", two heads] & {H^{2}(X,\Z/\ell^{r}(1))} \arrow[r, "\delta"] \arrow[d, "\pi_{2}", two heads] & {H^{3}(X,\Z_{\ell}(1))[\ell^{r}]} \arrow[r] \arrow[d, "\pi_{3}", two heads]   & 0 \\
 & {H^{2}_{0,nr}(X,\Z_{\ell}(1))/\ell^{r}} \arrow[r]                          & {H^{2}_{0,nr}(X,\Z/\ell^{r}(1))} \arrow[r]                                      & {\frac{H^{2}_{0,nr}(X,\Z/\ell^{r}(1))}{H^{2}_{0,nr}(X,\Z_{\ell}(1))}} \arrow[r] & 0,
\end{tikzcd}\end{center} where the upper row corresponds to the short exact sequence induced by \eqref{eq:bockstein-sequence} and where the $\pi_{i}$ are surjective simply because $H^{2}(X,A(n))=H^{2}(F_{1}X,A(n))\twoheadrightarrow H^{2}_{0,nr}(X,A(n))$ is onto.\par We wish to show that the lower row can be completed to a short exact sequence. It suffices to prove injectivity for $H^{2}_{0,nr}(X,\Z_{\ell}(1))/\ell^{r}\longrightarrow H^{2}_{0,nr}(X,\Z/\ell^{r}(1))$. To this end, pick a class $\alpha\in H^{2}(X,\Z_{\ell}(1))$ and assume that its reduction $\bar{\alpha}$ modulo $\ell^{r}$ restricts to zero in $H^{2}_{0,nr}(X,\Z/\ell^{r}(1))$. It follows from the Gysin sequence \eqref{eq:les} that $\bar{\alpha}=\iota_{*}\bar{\xi}$ for some $\xi\in\bigoplus_{x\in X^{(1)}}\Z_{\ell}\cdot[x]$, where we denote by $\bar{\xi}$ the reduction of this cycle modulo $\ell^{r}$. Therefore $\alpha=\iota_{*}\xi\in H^{2}(X,\Z_{\ell}(1))/\ell^{r}$ and by exactness of \eqref{eq:les}, the image of the class $\alpha$ in $H^{2}_{0,nr}(X,\Z_{\ell}(1))/\ell^{r}$ is zero.\par Next we prove that $\pi_{3}$ is an isomorphism. Recall that this map is onto and so we only need to prove is injective. In the argument of the previous paragraph, we have seen that the kernel $\ker(\pi_{1})$ surjects onto $\ker(\pi_{2})$. Since $\pi_{1}$ is surjective, the result is an immediate consequence of the serpents lemma.\par Finally, \eqref{eq:short-exact-sequence} follows from the short exact sequence \begin{align}\label{eq:short-exact-sequence-2}
    0\longrightarrow
H^{2}_{0,nr}(X,\Z_{\ell}(1))/\ell^{r}\longrightarrow H^{2}_{0,nr}(X,\Z/\ell^{r}(1))\overset{\delta}{\longrightarrow} H^{3}(X,\Z_{\ell}(1))[\ell^{r}]\longrightarrow 0,\end{align} by taking the direct limit over $r$. This completes the proof.\end{proof}
If $X$ is smooth and equi-dimensional, then \eqref{eq:short-exact-sequence} yields \begin{align}\label{eq:extension-brauer} 
0\longrightarrow \Br(X)[\ell^{\infty}]_{div}\longrightarrow\Br(X)[\ell^{\infty}]\overset{\delta}{\longrightarrow} H^{3}_{cont}(X,\Z_{\ell}(1))_{\tors}\longrightarrow 0,\end{align} where $\Br(X)[\ell^{\infty}]_{div}:=\frac{H^{2}_{0,nr}(X,\Q_{\ell}(1))}{H^{2}_{0,nr}(X,\Z_{\ell}(1))}$ is a divisible group. Note that the latter group, in general, need not
be the maximal divisible subgroup of $\Br(X)[\ell^{\infty}]$. This is always the case if $k$ is separably closed, see \cite[Proposition 5.2.9]{CS-brauer}.

\subsection{Strictly semi-stable models}\label{subsec:semi-stable} Let $R$ be a discrete valuation ring with fraction field $k$ and residue field $\kappa$. A proper flat $R$-scheme $\mathcal{X}\to\Spec R$ is called \textit{strictly semi-stable} if the generic fibre $X_{\eta}:=\mathcal{X}\times_{R} k$ is smooth and the special fibre $X_{0}:=\mathcal{X}\times_{R}\kappa$ is a geometrically reduced simple normal crossing divisor on $\mathcal{X}$, i.e. the irreducible components $X_{0i}$ are all smooth Cartier divisors and for every $n$, the intersection of $n$ distinct components is either empty or smooth and equi-dimensional of codimension $n$ in $\mathcal{X}$. Note that the total space $\mathcal{X}$ is in fact regular (see \cite[Remark 1.1.1 and Remark 1.1.2]{hart}). We say that the dual graph $\Gamma$ of the special fibre $X_{0}$ is a \textit{chain} if the scheme-theoretic intersection of two consecutive components $X_{0i}\cap X_{0i+1}$ is non-empty while all intersections $X_{0i}\cap X_{0j}$ with $j\notin\{i-1,i,i+1\}$ are empty.
\subsection{An injectivity result of Schreieder} Let $Y$ be a smooth projective variety over an algebraically closed field $k$ and let $n$ be a positive integer invertible in $k$. We shall use the notation \begin{align} E^{j}_{n}(Y):=\ker(\CH^{j}(Y)/n\overset{\cl^{j}_{Y}}{\longrightarrow} H^{2j}(Y,\mu_{n}^{\otimes j})).\end{align}

\par The following result is contained in \cite{Sch-griffiths}.

\begin{theorem}{$($\cite{Sch-griffiths}$)$}\label{thm:injectivity-exterior-product-a} Let $\kappa$ be an algebraically closed field of characteristic zero and fix an algebraic closure $k$ of the fraction field $\kappa((t))$. Let $\mathcal{X}\to\Spec\kappa[[t]]$ be a proper strictly semi-stable scheme whose geometric generic fibre $X_{\bar{\eta}}$ is a surface with $\NS(X_{\bar{\eta}})_{\tors}\cong\Z/n$ for some $n\geq2$. Assume that the following properties hold \begin{itemize}
    \item [(P1)]\label{itm:P1} there is a class $\alpha\in\Br(\mathcal{X})[n]$, such that $\delta(\alpha|_{X_{\bar{\eta}}})\in \bigoplus_{\ell|n}H^{3}(X_{\bar{\eta}},\Z_{\ell}(1))_{\tors}\cong\Z/n$ is a generator, where $\delta$ is the Bockstein map defined on $\Br(X_{\bar{\eta}})$ and such that
    \item [(P2)]\label{itm:P2} for each component $X_{0i}$ of the special fibre, $\alpha|_{X_{0i}}=0\in\Br(X_{0i})$.
\end{itemize} Then for every smooth projective variety $Z$ over $\kappa$ the exterior product map \begin{align}\label{eq:exterior-product-a}
\CH^{j}(Z)/n\longrightarrow \CH^{j+1}(X_{\bar{\eta}}\times_{k}Z_{k})/n,\ [z]\mapsto [\mathcal{L}\times z_{k}]\end{align} is injective, where $\mathcal{L}\in\NS(X_{\bar{\eta}})_{\tors}$ is any generator.\end{theorem}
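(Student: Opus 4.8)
The plan is to detect the exterior product with the torsion class $\mathcal{L}$ by a \emph{secondary}, Bockstein-type cohomological invariant manufactured from the Brauer class $\alpha$, and thereby to reduce the assertion to an injectivity property of refined unramified cohomology on the factor $Z$. One first reduces to the case where $n=\ell^{r}$ is a prime power: all groups in sight split as products over the primes $\ell\mid n$, and $\NS(X_{\bar\eta})_{\tors}\cong\Z/n$ and $\bigoplus_{\ell\mid n}H^{3}(X_{\bar\eta},\Z_{\ell}(1))_{\tors}\cong\Z/n$ split accordingly. Since $E^{j}_{n}(Z)\subseteq A^{j}(Z)/n$ is annihilated by $n$, one has $E^{j}_{n}(Z)/n=E^{j}_{n}(Z)$, so it suffices to show that if $[z]\in E^{j}_{n}(Z)$ satisfies $[\mathcal{L}\times z]=0$ in $A^{j+1}(X_{\bar\eta}\times_{k}Z_{k})/n$, then $[z]=0$ in $A^{j}(Z)/n$.

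Next I would assemble the two halves of the detecting invariant. On the $Z$-side: since $[z]\in E^{j}_{n}(Z)$ its cycle class vanishes mod $n$, so by Schreieder's refined unramified cohomology formalism \cite{Sch-refined}---using the Gysin sequences \eqref{eq:les}, the Bockstein sequence of Proposition~\ref{prop:short-exact-sequence}, and the equality $N^{j-1}\CH^{j}_{\Z_{\ell}}=\CH^{j}_{\alg,\Z_{\ell}}$ over an algebraically closed field---the class $[z]$ carries a well-defined refined unramified invariant $\lambda_{Z}([z])$ in a suitable refined unramified cohomology group of $Z$ with $\Z/n$-coefficients, and $\lambda_{Z}$ is \emph{injective} on $E^{j}_{n}(Z)$; moreover it stays injective under the base change $\kappa\hookrightarrow k$, because the trivial family $Z\times\Spec\kappa[[t]]$ furnishes a specialization map splitting that base change. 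On the $X_{\bar\eta}$-side: by property (P1) the class $\alpha|_{X_{\bar\eta}}\in\Br(X_{\bar\eta})[n]$ is unramified, and its Bockstein $\delta(\alpha|_{X_{\bar\eta}})$ generates $H^{3}(X_{\bar\eta},\Z_{\ell}(1))_{\tors}\cong\Z/n$, whereas $\cl^{1}(\mathcal{L})$ generates $H^{2}(X_{\bar\eta},\Z_{\ell}(1))_{\tors}\cong\Z/n$; since $X_{\bar\eta}$ is a smooth projective surface, Poincar\'e duality puts these two cyclic torsion groups in perfect linking duality, so $\langle\cl^{1}(\mathcal{L}),\delta(\alpha|_{X_{\bar\eta}})\rangle$ is a generator of $\tfrac1n\Z/\Z$. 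Because $\alpha|_{X_{\bar\eta}}$ is unramified, lifting it to $\tilde\alpha\in H^{2}(X_{\bar\eta},\mu_{n})$ via the Kummer sequence \eqref{eq:kummer} and forming the slant product of the refined cycle class with $\tilde\alpha$ along the $X_{\bar\eta}$-factor produces a homomorphism $\Psi$ from $A^{j+1}(X_{\bar\eta}\times_{k}Z_{k})/n$ into the target of the base-changed $\lambda_{Z_{k}}$; one uses the \emph{refined} cycle class precisely so that $\Psi$ annihilates algebraically trivial cycles and $n\cdot\CH^{j+1}$, and is hence well defined. A K\"unneth/projection-formula computation---in which the transcendental K\"unneth components drop out because they pair trivially with the torsion class $\delta(\alpha|_{X_{\bar\eta}})$, the remaining term being isolated by the linking pairing---then gives $\Psi([\mathcal{L}\times z])=\langle\cl^{1}(\mathcal{L}),\delta(\alpha|_{X_{\bar\eta}})\rangle\cdot\lambda_{Z_{k}}([z])$ up to sign. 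Chaining the implications yields $[\mathcal{L}\times z]=0\Rightarrow\Psi([\mathcal{L}\times z])=0\Rightarrow\lambda_{Z_{k}}([z])=0\Rightarrow\lambda_{Z}([z])=0\Rightarrow[z]=0$.

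The hard part---and the reason the full strictly semi-stable model $\mathcal{X}$ together with property (P2), and not merely the surface $X_{\bar\eta}$ with its Brauer class, is needed---is to verify that $\Psi$ is genuinely well defined on $A^{j+1}(X_{\bar\eta}\times_{k}Z_{k})/n$ and, above all, that its value descends from the algebraically closed field $k=\overline{\kappa((t))}$ to $\kappa$. Since $\alpha$ is defined on all of $\mathcal{X}$, the slant construction extends over the family $\mathcal{X}\times_{\kappa[[t]]}(Z\times_{\kappa}\kappa[[t]])$, whose special fibre is $\bigcup_{i}(X_{0i}\times Z)$; after a finite base change of $\kappa[[t]]$ and a resolution---harmless because the dual graph of the special fibre is a chain---one restricts the relevant refined unramified classes to this special fibre, and their residues along the components $X_{0i}\times Z$ vanish exactly because $\alpha|_{X_{0i}}=0$ in $\Br(X_{0i})$ by property (P2). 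This unramifiedness over $\kappa[[t]]$ is what makes the specialization map injective on the classes at issue, and hence forces $\lambda_{Z}([z])=0$. I expect this residue-and-specialization bookkeeping for the semi-stable degeneration to be the principal obstacle; once it is in place the proof is completed by the Bockstein/K\"unneth computation above together with the standard refined unramified cohomology toolkit. (A full proof is given in \cite{Sch-griffiths}.)
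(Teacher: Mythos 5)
Your outline correctly identifies the general strategy that underlies this theorem --- detect $[\mathcal{L}\times z]$ by pairing with the Brauer class $\alpha$, and use the semi-stable model together with (P2) to control the specialization --- but it is not what the paper does, and read as a proof it has a genuine gap at the decisive point. The paper does not reprove the injectivity mechanism at all: its proof is a reduction to the cited \cite[Theorem 6.1]{Sch-griffiths}, namely (i) \cite[Remark 6.2]{Sch-griffiths} together with the exact sequence \eqref{eq:short-exact-sequence-2} shows that (P1) and (P2) imply the hypotheses (C1), (C2) of that theorem; (ii) \cite[Lemma 3.2]{Sch-griffiths} identifies $\tilde\delta(H^1(X_{\bar\eta},\mu_{\ell_i^{r_i}}))\cong A^1(X_{\bar\eta})[\ell_i^{r_i}]\cong\Z/\ell_i^{r_i}$, which is exactly the free module $M$, generated by the image of $\mathcal{L}$, to which Theorem 6.1 applies; (iii) the Chinese remainder theorem assembles the prime factors of $n$. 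Your proposal carries out only step (iii), and replaces step (ii) by a Poincar\'e-duality linking-form heuristic that never actually ties $\mathcal{L}$ to the module on which the cited injectivity statement is proved.

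The substantive gap is the construction of $\Psi$. You assert that a slant product of a ``refined cycle class'' with a lift $\tilde\alpha\in H^2(X_{\bar\eta},\mu_n)$ descends to $A^{j+1}(X_{\bar\eta}\times_k Z_k)/n$ and satisfies a K\"unneth/projection formula $\Psi([\mathcal{L}\times z])=\langle\cl^1(\mathcal{L}),\delta(\alpha|_{X_{\bar\eta}})\rangle\cdot\lambda_{Z_k}([z])$. Neither claim is established: there is no K\"unneth decomposition of $A^{j+1}(X_{\bar\eta}\times Z_k)/n$ nor of the refined unramified cohomology of a product, and showing that the pairing with $\alpha$ kills algebraically trivial cycles and multiples of $n$ --- by spreading them out, specializing through the strictly semi-stable model, and controlling residues on the components $X_{0i}\times Z$ via (P2) --- is precisely the content of \cite[Theorem 6.1]{Sch-griffiths}, which your text explicitly defers as ``the principal obstacle'' and ultimately cites. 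So what you have is an honest sketch of the strategy of the cited theorem rather than a proof of the statement; to match the paper you should instead verify the two reduction steps (i) and (ii) above and invoke \cite[Theorem 6.1]{Sch-griffiths} as a black box.
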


\begin{proof} By \cite[Remark 6.2]{Sch-griffiths} and \eqref{eq:short-exact-sequence-2}, the conditions $(\text{C}1)$ and $(\text{C}2)$ in \cite[Theorem 6.1]{Sch-griffiths} can be replaced by $(\text{P}1)$ and $(\text{P}2)$, respectively. Write $n=\ell^{r_{1}}_{1}\ell^{r_{2}}_{2}\dots\ell^{r_{s}}_{s}$ for the prime factorisation. Let $\tilde{\delta}\colon H^{1}(X_{\bar{\eta}},\mu_{\ell_{i}^{r_{i}}})\to H^{2}(X_{\bar{\eta}},\mu_{\ell_{i}^{r_{i}}})$ be the composition of the Bockstein map with reduction modulo $\ell_{i}^{r_{i}}$ and recall by \cite[Lemma 3.2]{Sch-griffiths} that $$\tilde{\delta}(H^{1}(X_{\bar{\eta}},\mu_{\ell_{i}^{r_{i}}}))\cong \NS(X_{\bar{\eta}})[\ell_{i}^{r_{i}}]/\ell_{i}^{r_{i}}\NS(X_{\bar{\eta}})[\ell_{i}^{2r_{i}}].$$ Since $\NS(X_{\bar{\eta}})_{\tors}\cong\Z/n$, we find $\tilde{\delta}(H^{1}(X_{\bar{\eta}},\mu_{\ell_{i}^{r_{i}}}))\cong \NS(X_{\bar{\eta}})[\ell_{i}^{r_{i}}]\cong\Z/\ell_{i}^{r_{i}}$. The injectivity of the composite\begin{align*} \NS(X_{\bar{\eta}})[\ell_{i}^{r_{i}}]\otimes E^{j}_{\ell_{i}^{r_{i}}}(Z)\longrightarrow \NS(X_{\bar{\eta}})[\ell_{i}^{r_{i}}]\otimes E^{j}_{\ell_{i}^{r_{i}}}(Z_{k})\longrightarrow \CH^{j+1}(X_{\bar{\eta}}\times_{k}Z_{k})/\ell_{i}^{r_{i}}\end{align*} thus follows from \cite[Theorem 6.1]{Sch-griffiths} applied to the free $\Z/\ell_{i}^{r_{i}}$-module $M=\tilde{\delta}(H^{1}(X_{\bar{\eta}},\mu_{\ell_{i}^{r_{i}}}))$ and from the commutativity of the diagram \cite[(3.3)]{Sch-griffiths}, see \cite[Lemma 3.3 and Theorem 2.7]{Sch-griffiths}. The exterior product map $$E^{j}_{n}(Z)\longrightarrow \CH^{j+1}(X_{\bar{\eta}}\times_{k}Z_{k})/n,\ [z]\mapsto [\mathcal{L}\times z_{k}]$$ in turn is injective by the Chinese remainder theorem. 
\par To conclude we need to show that the kernel of \eqref{eq:exterior-product-a} is really contained in $E^{j}_{n}(Z)$. To see this, consider the natural projections $p\colon X_{\bar{\eta}}\times_{k} Z_{k}\to X_{\bar{\eta}}$ and $q\colon X_{\bar{\eta}}\times_{k} Z_{k}\to Z_{k}$ and note that $$\cl^{j+1}_{X_{\bar{\eta}}\times_{k} Z_{k}}([\mathcal{L}\times z])=p^{*}\cl_{X_{\bar{\eta}}}^{1}([\mathcal{L}])\cup q^{*}\cl_{Z_{k}}^{j}([z])\in H^{2j+2}(X_{\bar{\eta}}\times_{k} Z_{k},\Z/n(j+1)).$$ 
Put $\gamma:=\cl_{X_{\bar{\eta}}}^{1}([\mathcal{L}])\in H^{2}(X_{\bar{\eta}},\Z/n(1))$ and $\beta:=\cl_{Z_{k}}^{j}([z])\in H^{2j}(Z_{k},\Z/n(j))$. Since $\gamma\neq 0$, we find that $p^{*}\gamma\cup q^{*}\beta=0$ if and only if $\beta=0$. Indeed, Poincar\'e duality yields a class $\bar{\gamma}\in H^{2}(X_{\bar{\eta}},\Z/n(1))$ with $\gamma\cup\bar{\gamma}=\cl^{2}_{X_{\bar{\eta}}}(pt)$ and we can recover the class $\beta$ by 
$$q_{*}(p^{*}\bar{\gamma}\cup p^{*}\gamma \cup q^{*}\beta)=q_{*}(p^{*}\cl_{X_{\bar{\eta}}}^{2}(pt)\cup q^{*}\beta)=\beta\in H^{2j}(Z_{k},\Z/n(j)).$$ 
Hence, we see that $[\mathcal{L}\times z]=0$ implies that $\cl_{Z_{k}}^{j}([z])=0\in H^{2j}(Z_{k},\Z/n(j))$. In particular, by the invariance of \'etale cohomology under separably closed field extensions, the latter relation is equivalent to $[z]\in E^{j}_{n}(Z)$. The proof is finally complete.\end{proof}

\subsection{Griffiths groups modulo $n$} We need the following variation of Totaro's result in \cite{totaro-annals}.\begin{theorem}{$($\cite{totaro-annals}$)$}\label{thm:infinite-modulo-n} There is a smooth quartic curve $C\subset\mathbb{P}^{2}_{\C}$, such that for every $n\geq2$ the group $\Griff^{2}(JC)/n$ contains infinitely many elements of order $n$.\end{theorem}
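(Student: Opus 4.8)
The plan is to deduce the statement formally from two cited inputs: Totaro's theorem that $\Griff^{2}(JC)/\ell$ can be infinite, and the Merkurjev--Suslin finiteness of $n$-torsion in $\Griff^{2}$.

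First I would fix the curve. A very general genus $3$ curve is non-hyperelliptic, hence canonically a smooth plane quartic, so the relevant parameter space is a dense Zariski-open subset $U$ of a projective space over $\C$, in particular an irreducible variety with uncountably many $\C$-points. By Totaro \cite{totaro-annals} (applied to very general curve Jacobians of genus $3$), for each prime $\ell$ there is a countable union $Z_{\ell}$ of proper closed subvarieties of $U$ such that $\Griff^{2}(JC)/\ell$ is infinite for every $[C]\in U\setminus Z_{\ell}$. Since $\bigcup_{\ell}Z_{\ell}$ is again a countable union of proper closed subvarieties of $U$, there is a $\C$-point of $U$ avoiding all of them; I fix once and for all a smooth plane quartic $C$ given by such a point and set $G:=\Griff^{2}(JC)$. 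By construction $G/\ell$ is infinite for \emph{every} prime $\ell$.

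Next comes an elementary group-theoretic reduction whose only nontrivial ingredient is Merkurjev--Suslin \cite{MS}: $G[m]$ is finite for every integer $m\geq1$. I claim that whenever $A$ is an abelian group with $A/\ell$ infinite and $A[\ell^{r-1}]$ finite, the group $A/\ell^{r}$ contains infinitely many elements of order exactly $\ell^{r}$. Indeed, a coset $x+\ell^{r}A$ has order $\ell^{r}$ if and only if $\ell^{r-1}x\notin\ell^{r}A$, and one checks that $\ell^{r-1}x\in\ell^{r}A$ precisely when $x\in\ell A+A[\ell^{r-1}]$ (if $\ell^{r-1}x=\ell^{r}a$ then $\ell^{r-1}(x-\ell a)=0$, and conversely). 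Thus the elements of order $\ell^{r}$ in $A/\ell^{r}$ are exactly the preimage of the nonzero locus under the natural surjection $A/\ell^{r}\to A/(\ell A+A[\ell^{r-1}])$, and the target group equals $(A/\ell)/\overline{A[\ell^{r-1}]}$, the quotient of the infinite group $A/\ell$ by the finite image of $A[\ell^{r-1}]$; hence the target is infinite, its nonzero locus is infinite, and so is the preimage. Applying this to $A=G$ (legitimate since $G[\ell^{r-1}]$ is finite by Merkurjev--Suslin) gives that $G/\ell^{r}$ has infinitely many elements of order $\ell^{r}$ for every prime power $\ell^{r}$.

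Finally I would treat a general $n\geq2$: writing $n=\prod_{p\mid n}p^{e_{p}}$ we have $G/n\cong\bigoplus_{p\mid n}G/p^{e_{p}}$, and a tuple has order $n$ if and only if its $p$-component has order $p^{e_{p}}$ for every $p\mid n$. Picking a single prime $p_{0}\mid n$, infinitely many elements $x_{i}\in G/p_{0}^{e_{p_{0}}}$ of order $p_{0}^{e_{p_{0}}}$, and for each remaining $p\mid n$ one element $y_{p}\in G/p^{e_{p}}$ of order $p^{e_{p}}$ (all provided by the previous step), the tuples $(x_{i},(y_{p})_{p\neq p_{0}})$ form infinitely many elements of order $n$ in $G/n$. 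I expect no serious obstacle here: the argument is a formal combination of the two cited theorems, and the only steps requiring mild care are the choice of one quartic $C$ that is simultaneously very general for all primes $\ell$ (guaranteed by countability of the bad loci together with uncountability of $\C$) and the torsion bookkeeping in the reduction, where the finiteness of $G[\ell^{r-1}]$ is exactly what keeps $A/(\ell A+A[\ell^{r-1}])$ infinite.
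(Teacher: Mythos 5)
Your proof is correct and rests on the same two inputs as the paper's (Totaro for the infiniteness of $\Griff^{2}(JC)/\ell$ at every prime, Merkurjev--Suslin for the finiteness of torsion); the only real difference is that you package the prime-power step as a direct identification of the elements of order $\ell^{r}$ in $A/\ell^{r}$ with the preimage of the nonzero locus of $(A/\ell)/\overline{A[\ell^{r-1}]}$, whereas the paper runs the same underlying algebra (an element of $A/\ell^{r}$ has order $<\ell^{r}$ iff it lies in $\ell A+A[\ell^{r-1}]$ mod $\ell^{r}A$) as a proof by contradiction using an infinite set linearly independent mod $\ell$ --- your direct version is cleaner and quantifies exactly which cosets have full order. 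One small point of care: Totaro's theorem as cited gives infiniteness of $\CH^{2}(JC)/\ell$, and the paper spends a sentence passing from this to $\Griff^{2}(JC)/\ell$ (using that $A^{2}/\ell\cong\CH^{2}/\ell$ by divisibility of algebraically trivial cycles, and that the cycles realizing the infiniteness --- pullbacks of the Ceresa cycle under isogenies --- are homologically trivial), so you should make that reduction explicit rather than attributing the Griffiths-group statement directly to Totaro.
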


\begin{proof} Clearly by the Chinese remainder theorem it suffices to prove the theorem for all integers of the form $n=\ell^{r}$, where $\ell$ is a prime and $r\geq1$ an integer. By \cite[Theorem 3.1]{totaro-annals}, there is a smooth quartic curve $C\subset\mathbb{P}^{2}_{\C}$, such that $\CH^{2}(JC)/\ell$ is infinite for all prime numbers $\ell$. Recall that the group of algebraically trivial cycles modulo rational equivalence is divisible and so $A^{2}(JC)/\ell\cong\CH^{2}(JC)/\ell$. In particular, we find that $A^{2}(JC)/\ell$ is infinite for all primes $\ell$. The infiniteness of $A^{2}(JC)/\ell$ in this case arises from pulling back the Ceresa cycle by infinitely many distinct isogenies. These cycles are known to be homologically trivial (see \cite{ceresa}) and thus $\Griff^{2}(JC)/\ell$ is infinite dimensional for all primes $\ell$.

\par It remains to treat the case $r\geq2$. Recall by \cite[$\S$18]{MS} that the group $\Griff^{2}(JC)[\ell]$ is finite for all prime numbers $\ell$. Then the result follows from the following elementary lemma about abelian groups.
\end{proof}

\begin{lemma}\label{lem:elementary} Let $\ell$ be a prime number. Let $A$ be an abelian group such that the $\ell$-torsion subgroup $A[\ell]$ is finite and such that $A/\ell$ is infinite. Then for all $r\geq1$, the group $A/\ell^{r}$ contains infinitely many elements of order $\ell^{r}$. 
\end{lemma}

\begin{proof}
Assume that there is an integer $r\geq2$, such that $A/\ell^{r}$ contains only finitely many elements of order $\ell^{r}$. Fix an infinite set $\mathfrak{I}\subset A$ whose elements are linearly independent modulo $\ell$ and denote by $\overline{\mathfrak{I}}\subset A/\ell$ its image. We claim that all but finitely many elements in $\overline{\mathfrak{I}}$ lie in the image of the map $A[\ell^{r}]\to A/\ell$. Indeed, if the image of $x\in\mathfrak{I}$ in $A/\ell^{r}$ is killed by $\ell^{r_{0}}$ with $1\leq r_{0}< r$, then $\ell^{r_{0}}(x-\ell^{r-r_{0}}x')=0\in A$ for some $x'\in A$ and in particular $\tilde{x}:=x-\ell^{r-r_{0}}x'\in A[\ell^{r}]$. Since $r-r_{0}>0$, we find that the reduction of $\tilde{x}$ modulo $\ell$ coincides with the one of $x$. The claim follows as only finitely many images of elements of $\mathfrak{I}$ in $A/\ell^{r}$ have order $\ell^{r}$.\par To conclude the argument note that $A[\ell^{r}]$ is finite and that this in turn contradicts the infiniteness of $\overline{\mathfrak{I}}$. The proof of the Lemma \ref{lem:elementary} is complete.
\end{proof}

\section{A cyclic quotient of a complete intersection}\label{sec:cyclic-quotient}
In this section, we introduce the surfaces that will replace the Enriques surfaces that have seen applications in \cite{Sch-griffiths}.\par Let $k$ be an algebraically closed field of characteristic zero. Let $n\geq 2$ be an integer and fix a primitive $n$-th root of unity $1\neq\omega\in k$. Consider the automorphism $\varphi_{n}$ on $\mathbb{P}^{5}_{k}$ defined by 
\begin{align}\label{eq:varphi}(x_{0} : x_{1} : x_{2} : x_{3} : x_{4} : x_{5})\longmapsto (x_{0} : \omega x_{1} : x_{2} : \omega x_{3} : x_{4} :\omega x_{5}).\end{align} Its fixed locus consists of two disjoint planes given by \begin{align}\label{eq:fixed-locus}
    P_{1}:=\{x_{0}=x_{2}=x_{4}=0\}\ \text{and} \ P_{2}:=\{x_{1}=x_{3}=x_{5}=0\}.
\end{align}

\par The complete linear system $|\mathcal{O}_{\mathbb{P}^{5}_{k}}(n)|$ that parametrizes hypersurfaces $H\subset\mathbb{P}^{5}_{k}$ of degree $n$, splits under $\varphi_{n}$ into $n$ disjoint eigenspaces corresponding to the eigenvalues $\omega^{i}$, $i=0,1,\ldots,n-1$. An easy check shows that the ones that are correlated with the eigenvalue $1$ form projectively a $d$-dimensional family, where $d:=(n+1)(n+2)-1$. We denote this linear system by ${|\mathcal{O}_{\mathbb{P}^{5}_{k}}(n)|}^{\varphi_{n}}$ and call its elements \textit{invariant hypersurfaces}.\par Note that the above is a base-point free linear system, since the sections $x_{0}^{n},x_{1}^{n},\ldots,x_{5}^{n}$ are all invariant. It follows by Bertini's theorem that for general choices of hypersurfaces $H_{1},H_{2},H_{3}\in {|\mathcal{O}_{\mathbb{P}^{5}_{k}}(n)|}^{\varphi_{n}}$, the complete intersection $Y:=H_{1}\cap H_{2}\cap H_{3}$ is smooth and $\varphi_{n}$ acts freely on $Y$.\par A concrete example that works for any $n$ is given by the $3$ equations \begin{align*}\label{concrete-example}\tag{3.3}
         x_{0}^{n}+x_{1}^{n}+x_{3}^{n}+x_{4}^{n}+x_{1}x_{5}^{n-1}+x_{3}x_{5}^{n-1}=0,\\ 2x_{0}^{n}+x_{1}^{n}+3x_{2}^{n}-x_{3}^{n}+5x_{4}^{n}+x_{1}x_{5}^{n-1}-x_{3}x_{5}^{n-1}=0,\\ x_{0}^{n}-3x_{1}^{n}+x_{2}^{n}+x_{3}^{n}-2x_{4}^{n}+7x_{5}^{n}=0.
\end{align*}

 \par We let $S:=Y/\varphi_{n}$ be the $\Z/n$-quotient of $Y$ by the automorphism $\varphi_{n}$. This is a smooth projective surface because the quotient map $\pi\colon Y\to S$ is finite and \'etale. 
 
 \par Now take $k=\C$. If $n=2$, then $S$ is an Enriques surface and $Y$ is its canonical double cover, which is a $K3$ surface. For $n\geq 3$, the surface $S$ is always of general type with $q(S):=h^{0,1}(S)=0$ and has topological fundamental group $\pi_{1}(S)\cong\Z/n$. For the sake of completeness, in the following proposition we compute some of its invariants:
 
 \begin{proposition}\label{prp:invariants-X} Let $n\geq2$ be an integer. Let $Y\subset\mathbb{P}^{5}_{\C}$ be a smooth complete intersection of three invariant hypersurfaces $H_{i}\in{|\mathcal{O}_{\mathbb{P}^{5}_{\C}}(n)|}^{\varphi_{n}}$, such that $\varphi_{n}$ from \eqref{eq:varphi} acts freely on $Y$ and set $S:=Y/\varphi_{n}$. Then the following assertions are true:\begin{enumerate}
     \item $S$ is a smooth projective surface of general type if $n\geq 3$.
     \item The topological fundamental group $\pi_{1}(S)$ is cyclic of order $n$. 
     \item The Hodge numbers of $S$ are given by $h^{2,0}=\frac{n^{2}}{4}(5n^{2}-18n+17)-1$, $h^{1,1}=\frac{n^{2}}{2}(7n^{2}-18n+13),$ and $h^{1,0}=0$.
 \end{enumerate}\end{proposition}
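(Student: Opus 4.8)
The plan is to transfer everything to the complete intersection $Y$ via the finite \'etale degree-$n$ quotient map $\pi\colon Y\to S$ induced by the free $\Z/n$-action. Being \'etale, $\pi$ satisfies $\pi^{*}\Omega^{1}_{S}\cong\Omega^{1}_{Y}$, hence $\pi^{*}K_{S}=K_{Y}$ and $\pi^{*}c_{i}(S)=c_{i}(Y)$. Consequently $\chi_{\mathrm{top}}(Y)=n\,\chi_{\mathrm{top}}(S)$ (multiplicativity of the topological Euler number under finite covers), $K_{Y}^{2}=n\,K_{S}^{2}$ (projection formula), and therefore, by Noether's formula $\chi(\OO)=\tfrac1{12}(K^{2}+\chi_{\mathrm{top}})$, also $\chi(\OO_{Y})=n\,\chi(\OO_{S})$. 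We may assume $k=\C$, since the Hodge numbers do not change under an extension of algebraically closed fields.

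For part (1), adjunction for complete intersections gives $K_{Y}=\OO_{\mathbb{P}^{5}_{k}}(3n-6)|_{Y}$, which is ample once $n\geq3$; as $\pi$ is finite surjective with $\pi^{*}K_{S}=K_{Y}$, the line bundle $K_{S}$ is then ample too, so $S$ is a (minimal) surface of general type. For part (2), a smooth complete intersection surface in $\mathbb{P}^{5}$ is simply connected by the Lefschetz hyperplane theorem, so $\pi$ is the universal cover of $S$ and $\pi_{1}(S)$ is its deck group $\Z/n$. For part (3), finiteness of $\pi_{1}(S)$ forces $b_{1}(S)=0$, whence $h^{1,0}(S)=q(S)=0$.

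It remains to compute $h^{2,0}$ and $h^{1,1}$, which I would do by first computing the corresponding invariants of $Y$. The Koszul resolution
\begin{equation*}
0\to\OO(-3n)\to\OO(-2n)^{\oplus 3}\to\OO(-n)^{\oplus 3}\to\OO\to\OO_{Y}\to 0
\end{equation*}
on $\mathbb{P}^{5}_{k}$, together with $\chi(\OO_{\mathbb{P}^{5}}(-d))=-\binom{d-1}{5}$, gives $\chi(\OO_{Y})=1+3\binom{n-1}{5}-3\binom{2n-1}{5}+\binom{3n-1}{5}$, which one checks equals $\tfrac14\,n^{3}(5n^{2}-18n+17)$; hence $h^{2,0}(S)=p_{g}(S)=\chi(\OO_{S})-1=\tfrac1n\chi(\OO_{Y})-1=\tfrac14\,n^{2}(5n^{2}-18n+17)-1$. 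Next, from the normal bundle sequence $0\to T_{Y}\to T_{\mathbb{P}^{5}}|_{Y}\to\OO_{Y}(n)^{\oplus 3}\to 0$ one gets $c(T_{Y})=\bigl((1+h)^{6}(1+nh)^{-3}\bigr)|_{Y}$ with $h$ the hyperplane class, so $c_{2}(T_{Y})=(6n^{2}-18n+15)\,h^{2}|_{Y}$ and, since $\int_{Y}h^{2}=\deg Y=n^{3}$, we obtain $\chi_{\mathrm{top}}(Y)=n^{3}(6n^{2}-18n+15)$. Then $b_{2}(S)=\chi_{\mathrm{top}}(S)-2=n^{2}(6n^{2}-18n+15)-2$, and plugging this into $b_{2}(S)=2h^{2,0}(S)+h^{1,1}(S)$ yields $h^{1,1}(S)=\tfrac12\,n^{2}(7n^{2}-18n+13)$. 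The only genuinely computational input is the two elementary polynomial identities; the point to be careful about is the bookkeeping of how $\chi(\OO)$, $K^{2}$ and $\chi_{\mathrm{top}}$ transform along the \'etale cover $\pi$, which is precisely what reduces the problem to the classical geometry of the complete intersection $Y$. As a sanity check, $n=2$ recovers the Enriques values $p_{g}=0$, $h^{1,1}=10$, $\chi_{\mathrm{top}}=12$.
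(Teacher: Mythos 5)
Your proposal is correct and follows essentially the same route as the paper: everything is transferred to the complete intersection $Y$ via the finite \'etale cover $\pi$, using $K_{Y}^{2}=nK_{S}^{2}$, $\chi(\OO_{Y})=n\chi(\OO_{S})$, simple connectedness of $Y$, and Noether-type bookkeeping to extract $h^{2,0}$ and $h^{1,1}$. The only (harmless) differences are that you compute $\chi(\OO_{Y})$ and $\chi_{\mathrm{top}}(Y)$ directly from the Koszul resolution and Chern classes rather than citing the standard complete-intersection formulas, and you deduce ampleness of $K_{S}$ by descent along the finite surjection rather than via the projection-formula positivity argument.
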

 
\begin{proof} The computations are straightforward. Recall that any smooth complete intersection of dimension $2$ is simply connected. Hence $\pi_{1}(S)\cong\Z/n$ and $q(S)=\frac{1}{2}b_{1}(S)=0$. \par The geometric genus of $S$ is computed from the Euler characteristic formula $\chi(\mathcal{O}_{Y})=n\chi(\mathcal{O}_{S})$, where we use $\chi(\mathcal{O}_{Y})=\frac{n^{3}}{4}(5n^{2}-18n+17)$ (cf. \cite[Proposition V.2.1]{BHPV}).\par The Hirzebruch-Riemann-Roch theorem yields $h^{1,1}(S)=10\chi(\mathcal{O}_{S})- K_{S}^{2}$. We note that $K_{Y}^{2}=n K_{S}^{2}$, whereas $K_{Y}^{2}=9n^{3}(n-2)^{2}$ (see \cite[Proposition V.2.1]{BHPV}). An obvious substitution then gives the desired relation for $h^{1,1}(S)$.\par It remains to check that $S$ is indeed a surface of general type as long as $n\geq 3$. Recall that $K_{Y}=\mathcal{O}_{Y}(3n-6)$ is ample and  $K_{Y}=\pi^{*}(K_{S}\otimes \mathcal{L}^{\otimes n-1})$ (see \cite[Lemma I.17.1]{BHPV}) for some generator $\mathcal{L}\in\Pic(S)[n]\cong\Z/n$. We thus find by the projection formula that $\pi^{*}C.K_{Y}=\pi^{*}C.\pi^{*}(K_{S}\otimes \mathcal{L}^{\otimes n-1})=nC.K_{S}>0$ for all irreducible curves $C\subset S$. In particular $K_{S}$ is ample. This concludes the proof.\end{proof}
\begin{remark} We note that any \'etale $\Z/n$-quotient of a smooth complete intersection of multidegree $(n,n,n)$ in $\mathbb{P}^{5}_{\C}$ has the same invariants with the surface of Proposition \ref{prp:invariants-X}.\end{remark}

\section{Degeneration} Let $k$ be an algebraically closed field of characteristic zero and fix an integer $n\geq 2$. Let $Y\subset \mathbb{P}^{5}_{k}$ be a smooth complete intersection of three very general invariant hypersurfaces $H_{i}\in|\mathcal{O}_{\mathbb{P}^{5}_{k}}(n)|^{\varphi_{n}}$ and set $S:=Y/\varphi_{n}$.\par We aim to exhibit an example of (semi-stable) degenerations for the surface $S$ that will satisfy the properties of Theorem \ref{thm:extend-Brauer-class}. The main result of this section is as follows. 

\begin{theorem}\label{thm:degeneration-X_{1,2,3}} Let $\kappa$ be an algebraically closed field of characteristic zero and fix an integer $n\geq 2$. Let $k$ denote an algebraic closure of the fraction field $\kappa((t))$. There is a regular flat projective scheme $\mathcal{S}\to\Spec\kappa[[t]]$, such that:\begin{enumerate}
    \item the geometric generic fibre $S_{\bar{\eta}}$ is a quotient of the form $Y/\varphi_{n}$, where $Y\subset\mathbb{P}^{5}_{k}$ is a smooth complete intersection of three very general invariant hypersurfaces $H_{i}\in|\mathcal{O}_{\mathbb{P}^{5}_{k}}(n)|^{\varphi_{n}}$ and $\varphi_{n}$ is the automorphism \eqref{eq:varphi}.
    \item the special fibre $S_{0}=V\cup W$ is a reduced simple normal crossing divisor on $\mathcal{S}$ and consists of two irreducible components, where the surface $V$ is the projective plane $\mathbb{P}^{2}_{\kappa}$ and $W$ is a surface with $H^{3}(W,\Z_{\ell}(1))=0$ for all prime factors $\ell$ of $n$. If we set $C:=V\cap W$, then $C\subset V$ is a smooth plane curve of degree $n$. Furthermore, the class $[C]\in \CH^{1}(W)$ is divisible by $n$. 
\end{enumerate}\end{theorem}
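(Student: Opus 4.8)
The plan is to realise $\mathcal{S}$ as the quotient by $\varphi_n$ of the blow-up of a carefully chosen one-parameter degeneration of the complete intersection $Y$, exactly along the lines sketched in the introduction. Fix $p_0:=[0:0:0:0:0:1]$, which lies on the fixed plane $P_1$ of $\varphi_n$, and work in the affine chart $y_i=x_i/x_5$, in which $\varphi_n$ acts by $(y_0,y_1,y_2,y_3,y_4)\mapsto(\omega^{-1}y_0,y_1,\omega^{-1}y_2,y_3,\omega^{-1}y_4)$; thus the invariant linear coordinates near $p_0$ are $y_1,y_3$, while every degree-$n$ monomial in $y_0,y_2,y_4$ is invariant. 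First I would choose three $\varphi_n$-invariant forms $h_1^{(0)},h_2^{(0)},h_3^{(0)}$ of degree $n$, each vanishing at $p_0$, such that the linear parts at $p_0$ of $h_1^{(0)},h_2^{(0)}$ span $\langle y_1,y_3\rangle$, and such that $Y_0:=V(h_1^{(0)},h_2^{(0)},h_3^{(0)})$ is two-dimensional, disjoint from $P_2$, and smooth away from $p_0$; since the linear system of invariant degree-$n$ hypersurfaces through $p_0$ already contains $x_0^n,\dots,x_4^n$, it has base locus exactly $\{p_0\}$, so such a choice exists (and is general) by Bertini. Then I would set $\mathcal{Y}:=V\bigl(h_1^{(0)}+tg_1,\,h_2^{(0)}+tg_2,\,h_3^{(0)}+tg_3\bigr)\subset\mathbb{P}^5_{\kappa[[t]]}$ for sufficiently general invariant forms $g_i$, arranging that the geometric generic fibre $Y_{\bar\eta}$ is a very general smooth invariant complete intersection on which $\varphi_n$ acts freely and that $p_0$ is the only point of $\mathcal{Y}$ fixed by $\varphi_n$. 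The heart of this step — which should be isolated as Lemma \ref{lem:resolution-Y'} — is the local picture at $p_0$: eliminating $y_1,y_3$ by means of the two equations whose linear parts span $\langle y_1,y_3\rangle$ yields an identification $\mathcal{O}_{\mathcal{Y},p_0}\cong\kappa[[y_0,y_2,y_4]]$ under which $t$ goes to a power series whose leading form is a degree-$n$ form $q_n$; for general choices the plane curve $C:=\{q_n=0\}\subset\mathbb{P}^2$ is smooth, the singularity of $Y_0$ at $p_0$ is the resulting ordinary $n$-fold point (tangent cone the cone over $C$), $\mathcal{Y}$ is regular, and $\varphi_n$ acts near $p_0$ by the scalar $\omega^{-1}$ on $(y_0,y_2,y_4)$.

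Next I would blow up $p_0$ in $\mathcal{Y}$ to obtain a regular scheme $\tilde{\mathcal{Y}}$ with exceptional divisor $E\cong\mathbb{P}^2$; since $t$ has order $n$ at $p_0$, the special fibre of $\tilde{\mathcal{Y}}$ is $\tilde Y_0+nE$, where the strict transform $\tilde Y_0$ is smooth and meets $E$ transversally along $C$. Because $\varphi_n$ acts on $\mathfrak{m}_{p_0}/\mathfrak{m}_{p_0}^2$ by the scalar $\omega^{-1}$, its lift to $\tilde{\mathcal{Y}}$ acts trivially on $E$, by $\omega^{-1}$ on the normal bundle $\mathcal{O}_E(-1)$, and still freely away from $E$; I then set $\mathcal{S}:=\tilde{\mathcal{Y}}/\varphi_n$. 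Applying the standard fact that the quotient of the total space of a line bundle $L$ by the fibrewise $\mu_n$-scaling action is the total space of $L^{\otimes n}$ — to $\mathcal{O}_E(-1)$ along $E$ and to $N_{C/\tilde Y_0}=\mathcal{O}_C(-1)$ along $C$ — one finds that $\mathcal{S}$ is regular, that the image $V$ of $E$ is again $\mathbb{P}^2$, that $W:=\tilde Y_0/\varphi_n$ is smooth and meets $V$ transversally along $C$, and that the special fibre $\mathcal{S}_0=V\cup W$ is now reduced (the multiplicity $n$ being absorbed into the $n$-th power of the local fibre coordinate), with $C\subset V=\mathbb{P}^2$ a smooth plane curve of degree $n$. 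The scheme $\mathcal{S}$ is flat over $\kappa[[t]]$ because it is integral and dominates the base, it is projective because projectivity is inherited through the blow-up and through the finite quotient, and over the generic point the blow-up and the quotient produce $S_{\bar\eta}=Y_{\bar\eta}/\varphi_n$; this settles assertion (1) and most of assertion (2).

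What remains, and what I expect to be by far the hardest part, is to show $q(W)=0$ and $H^3(W,\Z_\ell(1))=0$ for every prime $\ell\mid n$; this is the content of Proposition \ref{prop:geometric-genus-W}, and it is precisely here that the special nature of the degeneration — the reason the whole construction works — gets used. The tools I would bring to bear are: the resolution $f\colon W\to\overline{Y_0}:=Y_0/\varphi_n$ of the normal surface $\overline{Y_0}$, whose only singularity is the cyclic quotient of the ordinary $n$-fold point of the complete intersection $Y_0$; the $\varphi_n$-equivariant description of $R^1f_\ast\mathcal{O}_{\tilde Y_0}$, whose graded pieces are $H^1(C,\mathcal{O}_C(j))$ with $\varphi_n$ acting by $\omega^{j}$, so that $h^1(\mathcal{O}_W)$ is governed by an edge map into $H^2(\overline{Y_0},\mathcal{O})$ together with the Gorenstein property of $Y_0$ and Serre duality; and, for the torsion, the observation that $H^3(W,\Z_\ell(1))\cong H_1(W,\Z)\otimes\Z_\ell$, which once $q(W)=0$ (whence $b_1(W)=b_3(W)=0$) is the finite group $H_1(W,\Z)[\ell^\infty]$, so that one must rule out connected degree-$\ell$ étale covers of $W$; for this I would exploit the cyclic cover $\tilde Y_0\to W$, which is étale away from $C$ and totally ramified of index $n$ along $C$. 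A complementary consistency check — that the numerical data come out as required — is available through the Clemens–Schmid sequence for the semistable family $\mathcal{S}$, comparing $H^\ast(V\cup W)$ with the cohomology of $S_{\bar\eta}=Y_{\bar\eta}/\varphi_n$ computed in Proposition \ref{prp:invariants-X}. Once these cohomological vanishings for $W$ are in hand, assertion (2) of the theorem is complete, and everything else is explicit construction or the essentially local computation of Lemma \ref{lem:resolution-Y'}.
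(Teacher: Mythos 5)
Your construction of $\mathcal{S}$ is the same as the paper's: the paper degenerates $Y$ inside the fixed smooth threefold $T=\{F_1=F_2=0\}$ to a complete intersection $Y_0$ whose only singularity is an ordinary $n$-fold point at the unique $\varphi_n$-fixed point $p_0$, checks that the total space is regular, blows up $p_0$ to get an exceptional $\mathbb{P}^2$ of multiplicity $n$ which is exactly the fixed locus of the lifted automorphism, and quotients; your local analysis at $p_0$ (linear parts of two equations spanning $\langle y_1,y_3\rangle$, $t$ becoming a power series with degree-$n$ leading form in $y_0,y_2,y_4$) and your reducedness argument for the quotient fibre match Lemma \ref{lem:resolution-Y'} and the proof of Theorem \ref{thm:degeneration-X_{1,2,3}}. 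So assertion (1) and the SNC/$C\subset V$ part of (2) are fine.

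The genuine gap is in the two vanishing statements $q(W)=0$ and $H^{3}(W,\Z_{\ell}(1))=0$, which you correctly flag as the hard part but only outline. The paper's proof of both hinges on a feature you discard by choosing the forms $h_i^{(0)}$ generically: because $F_3$ omits $x_5$ and $F_1,F_2$ involve $x_5$ only through $x_5^{n-1}(x_1\mp x_3)$, projection from $p_0$ realises $\tilde{Y}$ as an $(n-1)$-fold cyclic cover of a \emph{smooth complete intersection} $Z\subset\mathbb{P}^4_\kappa$ of multidegree $(n,n+1)$, branched along $P'+E'$ (Lemma \ref{lem:resolution-Y'}\eqref{it:branched-cover}). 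Then $q(\tilde Y)=0$ follows from Kawamata--Viehweg applied to the eigensheaves $\mathcal{L}^{\otimes -i}$ on $Z$, and $H^3(W,\Z_\ell(1))=0$ follows by combining Persson's surjectivity of $H_1(C,\Z)\to H_1(W,\Z)$ with the fact that $\iota_*\colon H^1(C,\Z_\ell(0))\to H^3(W,\Z_\ell(1))$ factors through $j_*\colon H^1(E,\Z_\ell(0))\to H^3(\tilde Y,\Z_\ell(1))$, which vanishes because $f^*\circ j'_*=j_*\circ(\times(n-1))$, $H^3(Z,\Z_\ell(1))=0$, and $n-1$ is invertible in $\Z_\ell$ for $\ell\mid n$. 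Your proposed substitutes each leave the essential step unverified. For $q(W)$: after taking $\varphi_n$-invariants in the Leray sequence for $\tilde Y_0\to Y_0$, the invariant part of $R^1$ is exactly $H^1(C,\mathcal{O}_C)\neq 0$ (for $n\geq 4$), so you must prove that the edge map $H^1(C,\mathcal{O}_C)\to H^2(Y_0,\mathcal{O}_{Y_0})^{\varphi_n}$ is injective; this is a nontrivial surjectivity of a restriction/residue map on canonical forms (note also that $Y_0/\varphi_n$ is in general not Gorenstein, so Serre duality must be applied upstairs). For the torsion: reducing to the nonexistence of connected $\Z/\ell$-\'etale covers of $W$ and invoking the ramified cover $\tilde Y_0\to W$ only works if you already know $H^1(\tilde Y_0,\Z/\ell)=0$, and this is \emph{not} automatic: blowing up the $n$-fold point inserts the positive-genus curve $E\cong C$, and $H_1(\tilde Y_0,\Z)$ is generated by the image of $H_1(E,\Z)$, so killing it is precisely equivalent to the statement you are trying to prove. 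The branched-cover structure over $Z$ (or some replacement for it) is the missing input, and it forces the non-generic choice of equations.
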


We shall need the following lemma. 

\begin{lemma}\label{lem:resolution-Y'}Let $k$ be an algebraically closed field of characteristic zero and fix an integer $n\geq2$. Consider the following three homogeneous polynomials $F_{i}\in k[x_{0},x_{1},x_{2},x_{3},x_{4},x_{5}]$ of degree $n$ \begin{align*}
F_{1}:=\alpha_{0}x_{0}^{n}+\alpha_{1}x_{1}^{n}+\alpha_{2}x_{2}^{n}+\alpha_{3}x_{3}^{n}+\alpha_{4}x_{4}^{n}+x_{5}^{n-1}(x_{1}-x_{3})\\
F_{2}:=\beta_{0}x_{0}^{n}+\beta_{1}x_{1}^{n}+\beta_{3}x_{3}^{n}+\beta_{4}x_{4}^{n}+x_{5}^{n-1}(x_{1}+x_{3})\\
F_{3}:=\gamma_{0}x_{0}^{n}+\gamma_{1}x_{1}^{n}+\gamma_{2}x_{2}^{n}+\gamma_{3}x_{3}^{n}+x_{4}^{n},\end{align*}where $\alpha_{i},\beta_{i},\gamma_{i}\in k$. Set $T:=\{F_{1}=F_{2}=0\}$ and $Y:=\{F_{1}=F_{2}=F_{3}=0\}$ for the corresponding intersections inside $\mathbb{P}^{5}_{k}$. Then there are $\alpha_{i},\beta_{i},\gamma_{i}\in k$, such that the following assertions hold true:\begin{enumerate}
    \item\label{it:smooth-Y_{1,2}} $T$ is a smooth complete intersection of dimension $3$. 
    \item \label{it:Y_{1,2,3}} $Y$ is a reduced and irreducible surface whose singular locus consists only of the point $p_{0}:=(0:0:0:0:0:1)$. In addition, the multiplicity of $Y$ at $p_{0}$ is $n$.
    \item \label{it: blow-up-Y_{1,2,3}} The blow-up $\tilde{Y}:=Bl_{p_{0}}Y$ is smooth and the exceptional divisor $E$ is a smooth plane curve of degree $n$.
    \item\label{it:branched-cover}The resolution $\tilde{Y}$ can be realized as an $(n-1)$-fold cyclic branched cover of a smooth complete intersection $Z\subset\mathbb{P}^{4}_{k}$ of multidegree $(n,n+1)$, branched along $P'+E'$, where $P'$ and $E'$ correspond to $Y\cap\{x_{5}=0\}$ and $E$, respectively.
\end{enumerate}\end{lemma}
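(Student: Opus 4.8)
The plan is to exploit the very special shape of $F_1,F_2,F_3$: the variable $x_5$ occurs only through the terms $x_5^{\,n-1}(x_1\mp x_3)$ of $F_1,F_2$, and not at all in $F_3$, so eliminating $x_5$ from $F_1=F_2=0$ produces a single equation $G:=(x_1+x_3)A-(x_1-x_3)B=0$ of degree $n+1$ in $x_0,\dots,x_4$, where $A:=\sum_{i=0}^{4}\alpha_ix_i^{n}$ and $B:=\beta_0x_0^{n}+\beta_1x_1^{n}+\beta_3x_3^{n}+\beta_4x_4^{n}$ are the diagonal parts of $F_1,F_2$. Set $Z:=\{G=F_3=0\}\subset\mathbb P^{4}_{\kappa}$. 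The cyclic group $\mu_{n-1}$ acts on $Y$ by $x_5\longmapsto\zeta x_5$, which preserves each $F_i$; the idea is to show that the linear projection $\pi_0$ from $p_0$ (a rational map $\mathbb P^{5}\to\mathbb P^{4}=\{x_5=0\}$) realises $\tilde Y$ as the $\mu_{n-1}$-quotient of $Z$. Each of the four assertions is a Zariski-open condition on the coefficients $(\alpha_i,\beta_i,\gamma_i)$, so it suffices to check each for a general choice; a single choice satisfying all of them then exists because the ground field is algebraically closed.

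For (1), and the smooth-away-from-$p_0$ part of (2)--(3), I would use Bertini in characteristic zero. In the chart $x_5\neq0$, with $y_i=x_i/x_5$, the linear parts of $F_1,F_2$ at $p_0$ are $y_1-y_3$ and $y_1+y_3$, which are independent, while the base loci of the families of divisors obtained by varying the coefficients of $F_1$, of $F_2$, and then of $F_3|_T$, are all contained in $\{p_0\}$, since $x_0^{n},\dots,x_4^{n}$ have common zero locus exactly $p_0$. Hence for general coefficients $\{F_1=0\}$, $\{F_2=0\}$ and $T=\{F_1=F_2=0\}$ are smooth, the last a complete intersection of dimension $3$ (this is (1)), and $Y=T\cap\{F_3=0\}$ is a surface with $\Sing Y\subseteq\{p_0\}$. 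For the local picture at $p_0$, solve $F_1=F_2=0$ for $y_1,y_3\in\kappa[[y_0,y_2,y_4]]$ (no linear term); from $2y_1=-(\alpha_0+\beta_0)y_0^{n}-\alpha_2y_2^{n}-(\alpha_4+\beta_4)y_4^{n}+(\text{h.o.t.})$ and its analogue for $y_3$ one reads off $\ord y_1,\ord y_3\geq n$, so substituting into $F_3$ gives $\widehat{\mathcal O}_{Y,p_0}\cong\kappa[[y_0,y_2,y_4]]/(f)$, with $f$ equal to $\gamma_0y_0^{n}+\gamma_2y_2^{n}+y_4^{n}$ plus terms of order $>n$. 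When $\gamma_0\gamma_2\neq0$ this leading form defines a smooth plane curve in $\mathbb P^{2}$, so $\mathrm{mult}_{p_0}Y=n$; since $Y$ is a complete intersection (hence Cohen--Macaulay, pure of dimension $2$) and regular in codimension $1$, it is normal by Serre's criterion, and being a positive-dimensional complete intersection in $\mathbb P^{5}$ it is connected, hence integral --- this is (2). For (3): the exceptional divisor of $\tilde Y=\mathrm{Bl}_{p_0}Y$ is the projectivised tangent cone $E=\{\gamma_0y_0^{n}+\gamma_2y_2^{n}+y_4^{n}=0\}\subset\mathbb P^{2}$, a smooth plane curve of degree $n$, and $\tilde Y$ is smooth: away from $E$ it is isomorphic to $Y\setminus\{p_0\}$, and along $E$ one checks smoothness in the three charts of $\mathrm{Bl}_0\mathbb A^{3}$ --- the strict transform is cut out by $f_n(1,\cdot,\cdot)+y_0(\cdots)$, whose differential is nowhere zero on $E$ because, by Euler's identity, the partials of a smooth degree-$n$ form cannot all vanish at a point of its zero locus within a given chart.

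For (4) I would first check that $G$ and $F_3$ vanish identically on $Y$: substitute $A=-x_5^{\,n-1}(x_1-x_3)$ and $B=-x_5^{\,n-1}(x_1+x_3)$ into $G$. Thus $\pi_0(Y)\subseteq Z$ and $\pi_0$ extends to a morphism $\pi\colon\tilde Y\to Z$. For general coefficients $Y$ contains no line through $p_0$ --- such a line would force $x_1=x_3=0$ together with three generically independent linear conditions on $(x_0^{n}\!:\!x_2^{n}\!:\!x_4^{n})$ --- so $\pi$ is finite, and by dimension and irreducibility surjective onto $Z$; over a general point $(a_0\!:\!\cdots\!:\!a_4)\in Z$ (so $a_1\neq\pm a_3$) the fibre is $\{(a_0\!:\!\cdots\!:\!a_4\!:\!t):t^{\,n-1}=-A(a)/(a_1-a_3)\}$, a single free $\mu_{n-1}$-orbit. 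Hence $\pi\colon\tilde Y\to Z$ is the quotient by $\mu_{n-1}$, i.e.\ an $(n-1)$-fold cyclic cover (an isomorphism in the degenerate case $n=2$), branched along the image of the fixed locus $\bigl(Y\cap\{x_5=0\}\bigr)\sqcup E$: the first piece gives $P'=Y\cap\{x_5=0\}=\{A=B=F_3=0\}\subset\mathbb P^{4}$, the second gives $E'=\pi(E)=\{x_1=x_3=0,\ \gamma_0x_0^{n}+\gamma_2x_2^{n}+x_4^{n}=0\}$ (here $\mu_{n-1}$ acts trivially on the exceptional $\mathbb P^{2}$, hence on $E$, so $\pi|_E\colon E\xrightarrow{\ \sim\ }E'$). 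It then remains to see that $Z$ is a \emph{smooth} complete intersection of multidegree $(n,n+1)$: the diagonal hypersurface $\{F_3=0\}\subset\mathbb P^{4}$ is smooth when all $\gamma_i\neq0$; the forms $G$ fill the linear system spanned by $\{(x_1\pm x_3)x_i^{n}\}$, whose base locus on $\{F_3=0\}$ is exactly $E'$, so by Bertini $Z$ is smooth off $E'$, while along $E'$ the covectors $dF_3$ (supported on $dx_0,dx_2,dx_4$) and $dG$ (supported on $dx_1,dx_3$) are independent for general coefficients, since $dG$ degenerates on $E'$ only where three generically independent linear forms in $(x_0^{n}\!:\!x_2^{n}\!:\!x_4^{n})$ vanish simultaneously. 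The same Bertini-plus-Jacobian recipe shows that $P'$ is a smooth curve. This establishes (4).

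The proof is not conceptually hard; the difficulty is bookkeeping. One must verify that the \emph{specific}, rather special, linear systems that appear --- for $T$, for $Y$, for $G$ restricted to $\{F_3=0\}$, and for $P'$ --- really have base loci only at the expected places ($\{p_0\}$, respectively $E'$), and that the residual rank and independence conditions of the relevant Jacobians along those base loci are met by a general choice of the finitely many coefficients; this is precisely where the diagonal shape of $F_1,F_2,F_3$ is used, and keeping all of the resulting open conditions simultaneously nonempty is the delicate part. A second point that needs care is the local step of solving $F_1=F_2=0$ for $y_1,y_3$: it should be carried out in $\widehat{\mathcal O}_{Y,p_0}$, so that the multiplicity, tangent-cone and blow-up statements are rigorous. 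Finally, one should confirm that the branch divisor of the cyclic cover is exactly $P'+E'$ and nothing larger, by examining the fibres of $\pi$ over $Z\cap\{A=0\}$ with some care.
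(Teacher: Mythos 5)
Your proposal is correct and follows essentially the same route as the paper: eliminate $x_5$ to obtain the $(n,n+1)$ complete intersection $Z=\{G=F_3=0\}$, identify $\tilde Y\to Z$ as the quotient by $\mu_{n-1}$ scaling $x_5$ (note the inverted phrase in your opening paragraph: $Z$ is the quotient of $\tilde Y$, not the other way around), and handle (1)--(3) by Bertini plus the local computation at $p_0$ yielding the leading form $\gamma_0y_0^{n}+\gamma_2y_2^{n}+y_4^{n}$, exactly as in the paper's explicit blow-up charts. The only substantive divergence is that you prove smoothness of $Z$ directly via Bertini and a Jacobian check along $E'$, whereas the paper deduces it from the smoothness of the cyclic cover $\tilde Y$ together with the smoothness of the ramification divisor; both arguments are valid.
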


\begin{proof} An easy check shows that if we set $\alpha_{i}=1$ for all $i$, then the hypersurface $S:=\{F_{1}=0\}$ is smooth. Let $L\subset|\mathcal{O}_{\mathbb{P}_{k}^{5}}(n)|$ be the linear system spanned by the sections $x_{0}^{n},x_{1}^{n},x_{3}^{n},x_{4}^{n}$ and $x_{5}^{n-1}(x_{1}+x_{3})$. When restricted to $S$, we find that this linear system has only $p_{0}$ as its base point, whereas the image of the corresponding morphism $\varphi_{L}\colon S\setminus\{p_{0}\}\rightarrow\mathbb{P}^{4}_{k}$ has dimension $>1$. Hence Bertini's theorem implies that for general $\beta_{i}\in k$, the intersection $T$ is reduced, irreducible and can only happen to have singularities at $p_{0}$. Note however that $T$ is always smooth at $p_{0}$. Indeed, the tangent space of $T$ at $p_{0}$ is given by $\{x_{1}=x_{3}=0\}\subset k^{5}$, which is $3$-dimensional. This proves \eqref{it:smooth-Y_{1,2}}.

\par We prove \eqref{it:Y_{1,2,3}} and \eqref{it: blow-up-Y_{1,2,3}} together. A Bertini argument as above yields $\gamma_{i}\in k$, such that $Y$ is an irreducible and reduced complete intersection of dimension $2$ that acquires a unique singularity at the point $p_{0}$.

\par Next we compute the blow-up $\pi\colon Bl_{p_{0}}Y\to Y$. We work over the chart $\{x_{5}\neq0\}$ and by abuse of notation we may identify $x_{i}\leftrightarrow \frac{x_{i}}{x_{5}}$. Consider the open sets $U_{i}:=\{(\xi_{0}:\xi_{1}:\xi_{2}:\xi_{3}:\xi_{4})\in\mathbb{P}^{4}_{k}\ |\ \xi_{i}\neq0\}$ and denote by $V_{i}\subset U_{i}\times \mathbb{A}^{1}_{k}$ the closed subset defined by the equations \begin{align}
    \{\zeta^{n-1}(\xi_{0}^{n}+\xi_{1}^{n}+\xi_{2}^{n}+\xi_{3}^{n}+\xi_{4}^{n})+\xi_{i}^{n-1}(\xi_{1}-\xi_{3})=0\}\\ \{\zeta^{n-1}(\beta_{0}\xi_{0}^{n}+\beta_{1}\xi_{1}^{n}+\beta_{3}\xi_{3}^{n}+\beta_{4}\xi_{4}^{n})+\xi_{i}^{n-1}(\xi_{1}+\xi_{3})=0\}\\ \{\gamma_{0}\xi_{0}^{n}+\gamma_{1}\xi_{1}^{n}+\gamma_{2}\xi_{2}^{n}+\gamma_{3}\xi_{3}^{n}+\xi_{4}^{n}=0\}.\end{align} Then the $\{V_{i}\}$ form an open covering of $Bl_{p_{0}}Y$ and the restriction of $\pi$ to $V_{i}$ is given by \begin{align}((\xi_{0}:\xi_{1}:\xi_{2}:\xi_{3}:\xi_{4}),\zeta)\mapsto (\zeta \frac{\xi_{0}}{\xi_{i}},\zeta \frac{\xi_{1}}{\xi_{i}},\zeta \frac{\xi_{2}}{\xi_{i}},\zeta \frac{\xi_{3}}{\xi_{i}},\zeta \frac{\xi_{4}}{\xi_{i}}).
    \end{align} 

Note here that two elements $(p,\zeta_{i})\in V_{i}$ and $(q,\zeta_{j})\in V_{j}$ are identified if and only if $p=q$ and $\zeta_{j}=\frac{\xi_{j}}{\xi_{i}}\zeta_{i}$.

\par A straightforward calculation then shows $E:=\{(\xi_{0}:0:\xi_{2}:0:\xi_{4})\in\mathbb{P}^{4}_{k}\ |\ \gamma_{0}\xi_{0}^{n}+\gamma_{2}\xi_{2}^{n}+\xi_{4}^{n}=0 \}$ is the exceptional divisor of this blow-up. Since $E$ is a smooth (may assume $\gamma_{0}\gamma_{2}\neq0$) Cartier divisor, we find that $Bl_{p_{0}}Y$ is also smooth. The multiplicity of $Y$ at $p_{0}$ coincides with the degree of the obvious embedding $E\subset\mathbb{P}^{2}_{k}$ and thus equals $n$. In particular, $E^{2}=-n$ (cf. \cite{ram}).\par It remains to prove \eqref{it:branched-cover}. It is readily checked that $$V_{i}\subset\{\xi_{0}^{n}+\xi_{1}^{n}+\xi_{2}^{n}+\xi_{3}^{n}+\xi_{4}^{n}\neq0\}\cup \{\beta_{0}\xi_{0}^{n}+\beta_{1}\xi_{1}^{n}+\beta_{3}\xi_{3}^{n}+\beta_{4}\xi_{4}^{n}\neq 0\},$$ as long as $\beta_{4}\gamma_{0}+\gamma_{2}(\beta_{0}-\beta_{4})-\beta_{0}\neq 0$. Over $\{\xi_{0}^{n}+\xi_{1}^{n}+\xi_{2}^{n}+\xi_{3}^{n}+\xi_{4}^{n}\neq0\}$ (similarly over the other open subset), we can rewrite the equations of $V_{i}$ as follows \begin{align}
        \zeta^{n-1}+\frac{\xi_{i}^{n-1}(\xi_{1}-\xi_{3})}{\xi_{0}^{n}+\xi_{1}^{n}+\xi_{2}^{n}+\xi_{3}^{n}+\xi_{4}^{n}}=0\\
        (\xi_{3}-\xi_{1})(\beta_{0}\xi_{0}^{n}+\beta_{1}\xi_{1}^{n}+\beta_{3}\xi_{3}^{n}+\beta_{4}\xi_{4}^{n})+(\xi_{1}+\xi_{3})(\xi_{0}^{n}+\xi_{1}^{n}+\xi_{2}^{n}+\xi_{3}^{n}+\xi_{4}^{n})=0\label{eq:Z_{1,2}_1} \\
        \gamma_{0}\xi_{0}^{n}+\gamma_{1}\xi_{1}^{n}+\gamma_{2}\xi_{2}^{n}+\gamma_{3}\xi_{3}^{n}+\xi_{4}^{n}=0\label{eq:Z_{1,2}-2}.
    \end{align}

\par We let $Z\subset\mathbb{P}^{4}_{k}$ be the complete intersection of multidegree $(n,n+1)$ defined by \eqref{eq:Z_{1,2}_1} and \eqref{eq:Z_{1,2}-2}. Then the natural projection $\pr_{1}\colon Bl_{p_{0}}Y\to\mathbb{P}^{4}_{k}$ factors through $Z$ and the above equations show that over $\pi^{-1}(\{x_{5}\neq0\})$, the map $\pr_{1}$ locally has the form of a cyclic covering of degree $n-1$ whose ramification locus is $E$. Note that $\pr_{1}$ is compatible with $Y\setminus\{p_{0}\}\to Z$ that is induced from the obvious projection $\mathbb{P}^{5}_{k}\setminus\{p_{0}\}\to\mathbb{P}^{4}_{k}$. Similarly, one then shows $Y\setminus\{p_{0}\}\to Z$ is also of the desired form and ramified along $P:=Y\cap\{x_{5}=0\}$. To conclude observe that $Z$ is smooth because its cyclic (branched) cover $Bl_{p_{0}}Y$ is smooth and the ramification locus is a smooth divisor (cf. \cite[Proposition 4.1.6]{laz}). This finishes the proof.\end{proof}

\begin{example}\label{ex:lemma} In the above setting, if we set $\alpha_{0}=2,\alpha_{1}=1,\alpha_{2}=3,\alpha_{3}=-1,\alpha_{4}=5$ and $\beta_{i}=1$ for all $i$, then $T$ is smooth as long as $n$ is even. If $n$ is odd, then $T$ has isolated singularities. The singular locus of $T$ in this case is given by the equations:$$x_{0}=x_{4}=x_{1}+x_{3}=nx_{3}^{n-1}+x_{5}^{n-1}=3nx_{2}^{n}+2(1-n)x_{3}x_{5}^{n-1}=0.$$ A Bertini argument however provides us with $\gamma_{i}\in k$, such that $Y\subset\mathbb{P}^{5}_{k}$ satisfies \eqref{it:Y_{1,2,3}},\eqref{it: blow-up-Y_{1,2,3}} and \eqref{it:branched-cover} from Lemma \ref{lem:resolution-Y'}. For instance, one can take $\gamma_{0}=1,\gamma_{1}=-3,\gamma_{2}=1,\gamma_{3}=1,\gamma_{4}=-2$ if $n\in\{2,3,4,5,6\}$.\end{example}

 \begin{proof}[Proof of Theorem \ref{thm:degeneration-X_{1,2,3}}] We may work over $\C$. Fix $\alpha_{i},\beta_{i},\gamma_{i}\in\C$, so that the consequence of Lemma \ref{lem:resolution-Y'} is true. Then $p_{0}:=(0:0:0:0:0:1)$ is the only fixed point of $\varphi_{n}$ in $Y$. Pick a hypersurface $H:=\{F=0\}\in|\mathcal{O}_{\mathbb{P}^{5}}(n)|^{\varphi_{n}}$ such that the intersection $T\cap H$ is smooth and such that $\varphi_{n}$ acts freely on $T\cap H$.\par Consider the one-parameter family $$\mathcal{Y}:=\{F_{3}+t(F-F_{3})=0\}\subset T\times \mathbb{A}^{1}.$$ We note that $\bar{p}_{0}=(p_{0},0)\in\mathcal{Y}$ is a smooth point, as the tangent space is $\{x_{1}=x_{3}=F(p_{0})t=0\}\subset k^{6}$, which is $3$-dimensional ($F(p_{0})\neq0$). It follows that if we perform a base change with respect to the completion of the local ring at $0\in\A^{1}$, then the resulting projective flat family $\mathcal{Y}\to\Spec\C[[t]]$ is regular and the geometric generic fibre $Y_{\bar{\eta}}$ is a smooth complete intersection of multidegree $(n,n,n)$, such that $\varphi_{n}$ acts freely on it.\par Let $\pi\colon Bl_{\bar{p}_{0}}\mathcal{Y}\to \mathcal{Y}$ be the blow-up at $\bar{p}_{0}\in\mathcal{Y}$ and denote by $\tilde{\varphi}_{n}$ the natural lift of $\varphi_{n}$. Consider the open subset $$U_{i}:=\{(\xi_{j})\in \mathbb{P}^{5}\ |\ \xi_{i}\neq0\}\times \A^{1}\subset Bl_{\bar{p}_{0}}(\mathbb{P}^{5}\times\A^{1}).$$ The exceptional divisor $V$ is a projective plane and as one can easily check it is given by the equations $\{\xi_{1}=\xi_{3}=\xi_{5}=\zeta=0\}\subset U_{i}$ for $i\in\{0,2,4\}$. If $i\in\{0,2,4\}$, then the restriction $\tilde{\varphi}_{n}|_{U_{i}}$ has the form
$$((\xi_{0}:\xi_{1}:\xi_{2}:\xi_{3}:\xi_{4}:\xi_{5}),\zeta)\mapsto((\xi_{0}:\omega\xi_{1}:\xi_{2}:\omega\xi_{3}:\xi_{4}:\omega\xi_{5}),\omega^{-1}\zeta)$$ from which immediately one obtains that $V$ constitutes the fix point locus of $\tilde{\varphi}_{n}$ inside $Bl_{\bar{p}_{0}}\mathcal{Y}$.\par It is clear how to proceed now. Set $\tilde{\mathcal{Y}}:=Bl_{\bar{p}_{0}}\mathcal{Y}$ and note by \eqref{it:Y_{1,2,3}} that the component $V$ appears with multiplicity $n$ in the special fibre of $\tilde{\mathcal{Y}}\to\Spec\C[[t]]$. Since $\Fix(\tilde{\varphi}_{n})=V$, quotienting out by $\tilde{\varphi}_{n}$ yields a regular flat projective family $\mathcal{S}:=\tilde{\mathcal{Y}}/\tilde{\varphi}_{n}\to\Spec\C[[t]]$ with geometric generic fibre $S_{\bar{\eta}}=Y_{\bar{\eta}}/\varphi_{n}$ degenerating into two components that are both clearly reduced. By abuse of notation we let $V\subset S_{0}$ denote the component corresponding to the branch locus of $\tilde{\mathcal{Y}}\to\mathcal{S}$. The other component $W$ of the special fibre $S_{0}$ is the quotient surface $\tilde{Y}/\tilde{\varphi}_{n}$, which is smooth by \eqref{it: blow-up-Y_{1,2,3}}. Note that the assertion $C:=V\cap W$ is a smooth plane curve of degree $n$ in $V$ follows from \eqref{it: blow-up-Y_{1,2,3}}, as well. Moreover, the class of $C$ in $\CH^1(W)$ is divisible by $n$, as $C\subset W$ is the branch locus of the $n$-fold cyclic cover $\tilde{Y}\to W$.

\par It remains to check $H^{3}(W,\Z_{\ell}(1))=0$ for all prime factors $\ell$ of $n$. From \cite[Proposition 2.5.7]{persson}, we get $\coker(H_{1}(C,\Z)\to H_{1}(W,\Z))=0$. By Poincar\'e duality, the latter is of course equivalent with $\iota_{*}\colon H^{1}_{sing}(C,\Z(0))\to H^{3}_{sing}(W,\Z(1))$ being surjective, where $\iota\colon C\hookrightarrow W$ is the inclusion. Especially, the comparison theorem (see \cite[Theorem III.3.12]{milne}) implies $\iota_{*}\colon H^{1}(C,\Z_{\ell}(0))\to H^{3}(W,\Z_{\ell}(1))$ is surjective for all primes $\ell$. Thus, our task is reduced to show that the push-forward $\iota_{*}$ is the zero map for all prime factors $\ell$ of $n$. In what follows, we shall identify the branch locus $C\subset W$ of the cyclic cover $\tilde{Y}\to W$ with its ramification locus $E$. By \cite[Lemma 2.3.6]{CS-brauer}, we find the commutative diagram \begin{center}
     \begin{tikzcd}
{H^{1}(C,\Z_{\ell}(0))} \arrow[d, "\times (n-1)"] \arrow[r, "j'_{*}"] & {H^{3}(Z,\Z_{\ell}(1))} \arrow[d, "f^{*}"] \\
{H^{1}(C,\Z_{\ell}(0))} \arrow[r, "j_{*}"]                            & {H^{3}(\tilde{Y},\Z_{\ell}(1))},       
\end{tikzcd}
 \end{center} where we note that $H^{3}(Z,\Z_{\ell}(1))=0$ because the surface $Z$ is a smooth complete intersection in $\mathbb{P}^{4}_{\C}$ as well as $\times(n-1)\colon H^{1}(C,\Z_{\ell}(0))\to H^{1}(C,\Z_{\ell}(0))$ is an isomorphism, since the integer $n-1$ is invertible in $\Z_{\ell}$. Consequently $j_{*}\colon H^{1}(C,\Z_{\ell}(0))\to H^{3}(\tilde{Y},\Z_{\ell}(1))$ is the zero map. To conclude observe that $\iota_{*}\colon H^{1}(C,\Z_{\ell}(0))\to H^{3}(W,\Z_{\ell}(1))$ factors through $j_{*}$. The proof is complete.
 \end{proof}
 
 For the sake of completeness, let us compute the geometric genus of $W$.\begin{proposition}\label{prop:geometric-genus-W} In the notation of Theorem \ref{thm:degeneration-X_{1,2,3}}, the geometric genus of $W$ is given by the formula $$g(W)=\frac{n^{2}}{4}(5n^{2}-18n+15)+\frac{3n}{2}-2.$$\end{proposition}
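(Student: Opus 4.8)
The plan is to relate $\chi(\mathcal{O}_W)$ to $\chi(\mathcal{O}_{\tilde Y})$ through the cyclic quotient $f\colon\tilde Y\to W=\tilde Y/\tilde\varphi_n$ from the proof of Theorem~\ref{thm:degeneration-X_{1,2,3}}, and then to compute $\chi(\mathcal{O}_{\tilde Y})$ from the fact that $\tilde Y=Bl_{p_0}Y$ resolves the unique (cone) singularity of $Y$. Since $q(W)=0$ by Theorem~\ref{thm:degeneration-X_{1,2,3}}, we have $g(W)=\chi(\mathcal{O}_W)-1$, so it suffices to determine $\chi(\mathcal{O}_W)$. As $\tilde\varphi_n$ has order $n$ and fixes $E$ pointwise, the morphism $f$ is the standard cyclic cover of degree $n$ branched along $C=E/\tilde\varphi_n$; thus $\mathcal{O}_W(C)\cong\mathcal{N}^{\otimes n}$ for a line bundle $\mathcal{N}$ on $W$, $f^*C=nE$, and $f_*\mathcal{O}_{\tilde Y}=\bigoplus_{i=0}^{n-1}\mathcal{N}^{-i}$.

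First I would record the intersection numbers on $W$. From $E^2=-n$ (Lemma~\ref{lem:resolution-Y'}) and $f^*C=nE$ the projection formula gives $n\,C^2=(f^*C)^2=n^2E^2$, so $C^2=-n^2$ and $\mathcal{N}^2=-1$. As $C$ is a smooth plane curve of degree $n$, $g(C)=\binom{n-1}{2}$, and adjunction on $W$ gives $C\cdot K_W=2g(C)-2-C^2=(n^2-3n)+n^2=2n^2-3n$, hence $\mathcal{N}\cdot K_W=2n-3$. Riemann--Roch on $W$, applied to each summand of $f_*\mathcal{O}_{\tilde Y}$, then yields
\[
\chi(\mathcal{O}_{\tilde Y})=\sum_{i=0}^{n-1}\chi(\mathcal{N}^{-i})=n\,\chi(\mathcal{O}_W)+\tfrac12\Big(\mathcal{N}^2\textstyle\sum_{i=0}^{n-1}i^2+(\mathcal{N}\cdot K_W)\sum_{i=0}^{n-1}i\Big)=n\,\chi(\mathcal{O}_W)+\frac{n(n-1)(n-2)}{3}.
\]

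Second, I would compute $\chi(\mathcal{O}_{\tilde Y})$ independently. The surface $Y$ is the special fibre of a flat family whose general fibre is a smooth complete intersection of multidegree $(n,n,n)$ in $\mathbb{P}^5$, so $\chi(\mathcal{O}_Y)=\frac{n^3}{4}(5n^2-18n+17)$, exactly as in the smooth case (cf.\ the proof of Proposition~\ref{prp:invariants-X}). Since $Y$ is normal with its only singularity at $p_0$, $\chi(\mathcal{O}_{\tilde Y})=\chi(\mathcal{O}_Y)-\dim_{\mathbb{C}}(R^1\pi_*\mathcal{O}_{\tilde Y})_{p_0}$ for the blow-up $\pi\colon\tilde Y\to Y$. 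A local computation (eliminating $x_1,x_3$ from $F_1,F_2$ as in the proof of Lemma~\ref{lem:resolution-Y'}) shows that the equation of $Y$ at $p_0$ has the form $g_n+(\text{terms of order}\ge n^2)$ with $g_n$ a form of degree $n$ cutting out the smooth plane curve $E$; by finite determinacy the singularity is analytically the affine cone over $E$, and the blow-up of Lemma~\ref{lem:resolution-Y'} gives $\mathcal{O}_{\tilde Y}(-E)|_E\cong\mathcal{O}_E(1)$ (consistent with $E^2=-n$). The theorem on formal functions then identifies $(R^1\pi_*\mathcal{O}_{\tilde Y})_{p_0}\cong\bigoplus_{j\ge0}H^1(E,\mathcal{O}_E(j))$; by Serre duality ($K_E=\mathcal{O}_E(n-3)$) and the identity $\sum_{i=2}^{n-1}\binom{i}{2}=\binom{n}{3}$ its dimension equals $\sum_{j=0}^{n-3}h^0(E,\mathcal{O}_E(n-3-j))=\binom{n}{3}$. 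Hence $\chi(\mathcal{O}_{\tilde Y})=\frac{n^3}{4}(5n^2-18n+17)-\binom{n}{3}$.

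Finally, equating the two expressions for $\chi(\mathcal{O}_{\tilde Y})$ and using $\binom{n}{3}+\frac{n(n-1)(n-2)}{3}=\frac{n(n-1)(n-2)}{2}$ gives $\chi(\mathcal{O}_W)=\frac{n^2}{4}(5n^2-18n+17)-\frac{(n-1)(n-2)}{2}$, and a routine rearrangement of $g(W)=\chi(\mathcal{O}_W)-1$ produces the asserted formula. I expect the only real obstacle to be the second step --- identifying the singularity of $Y$ at $p_0$ as the cone over $E$ and computing its geometric genus $\binom{n}{3}$. An alternative for that step is Noether's formula, via $K_{\tilde Y}=\pi^*K_Y+(2-n)E$ (so $K_{\tilde Y}^2=n(n-2)^2(9n^2-1)$) together with $\chi_{\mathrm{top}}(\tilde Y)=\chi_{\mathrm{top}}(Y_{\bar\eta})-\mu+1-2g(E)$ for the Milnor number $\mu=(n-1)^3$ of the singularity; this still rests on the same analytic description.
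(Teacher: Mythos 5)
Your argument is correct and reaches the right formula, but it takes a genuinely different route from the paper. The paper stays on the quotient side: it contracts the $(-n^{2})$-curve $C\subset W$ to obtain $W'=Y/\varphi_{n}$, uses the theorem on formal functions to show $R^{1}\pi_{*}\mathcal{O}_{W}\cong H^{1}(C,\mathcal{O}_{C})$ (here the strong negativity $C^{2}=-n^{2}$ kills every higher graded piece, so the computation is immediate), and then invokes Steenbrink's theorem on cohomologically insignificant degenerations to identify $H^{i}(W',\mathcal{O}_{W'})$ with $H^{i}(S_{\bar{\eta}},\mathcal{O}_{S_{\bar{\eta}}})$, giving $g(S_{\bar{\eta}})=g(W)+g(C)$ and hence the formula via Proposition \ref{prp:invariants-X}. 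You instead work upstairs on the $n$-fold cyclic cover $\tilde{Y}\to W$, compute $\chi(\mathcal{O}_{\tilde{Y}})$ from the constancy of $\chi$ in the flat family together with the geometric genus $\binom{n}{3}$ of the cone singularity of $Y$ at $p_{0}$, and descend through the eigensheaf decomposition $f_{*}\mathcal{O}_{\tilde{Y}}=\bigoplus\mathcal{N}^{-i}$. What your route buys is independence from Steenbrink's theorem and from Proposition \ref{prp:invariants-X}; what it costs is that the local analysis at $p_{0}\in Y$ is more delicate than the paper's local analysis at the vertex of the cone over $C$, because here the graded pieces $H^{1}(E,\mathcal{O}_{E}(j))$ do not vanish and the answer genuinely depends on identifying the singularity with the cone over $E$ (equivalently, on the surjectivity of $H^{0}(\mathcal{O}_{E_{m+1}})\to H^{0}(\mathcal{O}_{E_{m}})$ for all $m$), not just on $E^{2}=-n$. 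Your finite-determinacy justification does close this: the quasi-homogeneous equation $g_{n}$ has Jacobian ideal containing $\mathfrak{m}^{3n-5}$, hence is $(3n-4)$-determined, and the perturbation has order $\geq n^{2}>3n-4$ for all $n\geq2$; it would be worth writing out this one inequality, since it is the only point where the argument could conceivably fail. All the numerical steps ($C^{2}=-n^{2}$, $\mathcal{N}\cdot K_{W}=2n-3$, the Riemann--Roch sum $\tfrac{n(n-1)(n-2)}{3}$, and $\sum_{j}h^{1}(E,\mathcal{O}_{E}(j))=\binom{n}{3}$) check out, and the final answer agrees with $g(S_{\bar{\eta}})-g(C)$ as in the paper.
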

 \begin{proof} By Proposition \ref{prp:invariants-X}, it suffices to show $g(S_{\bar{\eta}})=g(C)+g(W)$, where $S_{\bar{\eta}}$ is the geometric generic fibre of the degeneration $\mathcal{S}\to\Spec\C[[t]]$ from Theorem \ref{thm:degeneration-X_{1,2,3}}. To this end, consider the morphism $\pi\colon W\to W'$ obtained by contracting the $-n^{2}$ curve $C$. In the notation of Lemma \ref{lem:resolution-Y'}, the surface $W'$ is the quotient $Y/\varphi_{n}$. We claim that the singular point $p_{0}\in W'$ is Du Bois, i.e. the canonical map \begin{align}\label{du-bois}R^{i}\pi_{*}\mathcal{O}_{W}\to H^{i}(C,\mathcal{O}_{C})\end{align} is isomorphism for $i>0$. Since $\pi$ is isomorphism outside $p_{0}$, we find that the coherent sheaf $\mathcal{F}^{i}:=R^{i}\pi_{*}\mathcal{O}_{W}$ is supported at $p_{0}$ if $i>0$ and so $\mathcal{F}^{i}=\hat{\mathcal{F}}^{i}$, where $\hat{\mathcal{F}}^{i}$ is the completion of the stalk at $p_{0}$ and $i>0$. Let $C_{m}$ denote the closed subscheme of $W$ defined by the ideal sheaf $\mathfrak{J}^{m}:=\mathcal{O}_{W}(-mC)$. The theorem on Formal functions \cite[Theorem III.11.1]{hart} then yields $$\hat{\mathcal{F}}^{i}\cong\lim_{\longleftarrow}H^{i}(C_{m},\mathcal{O}_{C_{m}}).$$ There are canonical short exact sequences $$0\longrightarrow\mathfrak{J}^{m}/\mathfrak{J}^{m+1}\longrightarrow\mathcal{O}_{C_{m+1}}\longrightarrow\mathcal{O}_{C_{m}}\longrightarrow0$$ for each $m$. We have $\mathfrak{J}^{m}/\mathfrak{J}^{m+1}\cong\mathcal{N}_{C/W}^{\otimes -m}$, where $\mathcal{N}_{C/W}$ is the normal bundle. It follows by Serre duality $$H^{1}(C,\mathfrak{J}^{m}/\mathfrak{J}^{m+1})\cong H^{0}(C,\mathcal{N}_{C/W}^{\otimes m}\otimes\omega_{C})=0,$$ where the last term vanishes because $\deg_{C}(\mathcal{N}_{C/W}^{\otimes m}\otimes\omega_{C})=-mn^{2}+(n-1)(n-2)-2<0$. The long exact sequence of cohomology thus gives $H^{i}(C_{m},\mathcal{O}_{C_{m}})\cong H^{i}(C,\mathcal{O}_{C})$ for all $m\geq 1$ and $i\geq 1$. In particular, $\hat{\mathcal{F}}^{1}\cong H^{1}(C,\mathcal{O}_{C})$ and $\hat{\mathcal{F}}^{i}\cong 0$ for $i>2$. This proves \eqref{du-bois}.\par It is easy to see that the Leray spectral sequence $$E^{i,j}_{2}:=H^{i}(W',R^{j}\pi_{*}\mathcal{O}_{W})\implies H^{i+j}(W,\mathcal{O}_{W})$$ yields the exact sequence 
 \begin{align}\label{spectral-sequence}0\to H^{1}(W',\mathcal{O}_{W'})\to H^{1}(W,\mathcal{O}_{W})\to H^{1}(C,\mathcal{O}_{C})\to H^{2}(W',\mathcal{O}_{W'})\to H^{2}(W,\mathcal{O}_{W})\to0.\end{align} Recall by Theorem \ref{thm:degeneration-X_{1,2,3}} that the torsion free part of $H^{3}_{sing}(W,\Z(1))$ is zero and thus, Poincar\'e duality gives that the first Betti number $b_{1}$ of $W$ is zero. In particular, we have $H^{1}(W,\mathcal{O}_{W})=0$. In addition, by \cite[Theorem 1 and Example (b)]{St}, we find that $H^{i}(W',\mathcal{O}_{W'})\cong H^{i}(S_{\bar{\eta}},\mathcal{O}_{S_{\bar{\eta}}})$. Altogether, from \eqref{spectral-sequence}, we obtain $g(S_{\bar{\eta}})=g(W)+g(C),$ as we wanted. The proof is complete.\end{proof}
 
\section{Extending Brauer Classes}
In this section, we aim to prove Theorem \ref{thm:extend-Brauer-class}. 

\begin{theorem} \label{thm:extending-brauer-classes-2} Let $\kappa$ be an algebraically closed field of characteristic zero and let $n\geq 2$ be an integer. Write $n=\ell_{1}^{r_{1}}\ell_{2}^{r_{2}}\ldots\ell_{s}^{r_{s}}$ for its prime factorisation. Let $\mathcal{S}\to\Spec\kappa[[t]]$ denote the degeneration from Theorem \ref{thm:degeneration-X_{1,2,3}}. After a possible base change, followed by a resolution of $\mathcal{S}$,
the following holds:
\par For all $1\leq i\leq s$, there is a class $\alpha_{i}\in\Br(\mathcal{S})[\ell_{i}^{r_{i}}]$, such that 
\begin{enumerate}
    \item\label{itm:property-ext} $\delta(\alpha_{i}|_{S_{\bar{\eta}}})\in H^{3}(S_{\bar{\eta}},\Z_{\ell_{i}}(1))\cong\Z/\ell_{i}^{r_{i}}$ is a generator, where $\delta$ is the Bockstein map defined on the Brauer group $\Br(S_{\bar{\eta}})$ and such that 
    \item\label{itm:property-rest-zero} for every component $S_{0j}$ of the special fibre, $\alpha_{i}|_{S_{0j}}=0\in\Br(S_{0j})$.\end{enumerate}\end{theorem}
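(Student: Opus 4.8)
The classes $\alpha_{i}$ can be produced one prime power at a time and recombined by the Chinese remainder theorem, so I fix a prime $\ell:=\ell_{i}\mid n$ and set $r:=r_{i}$, so that $\ell^{r}\parallel n$. By the Kummer sequence \eqref{eq:kummer}, constructing $\alpha\in\Br(\mathcal{S})[\ell^{r}]$ amounts to constructing a class in $H^{2}(\mathcal{S},\mu_{\ell^{r}})$, and since $\mathcal{S}$ is regular the Bockstein $\delta\colon\Br(\mathcal{S})[\ell^{\infty}]\to H^{3}(\mathcal{S},\Z_{\ell}(1))_{\tors}$ is surjective by \eqref{eq:extension-brauer}. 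Since $\delta$ commutes with pullback along $S_{\bar{\eta}}\to\mathcal{S}$, and since $H^{3}(S_{\bar{\eta}},\Z_{\ell}(1))=H^{3}(S_{\bar{\eta}},\Z_{\ell}(1))_{\tors}\cong\Z/\ell^{r}$ (the geometric generic fibre has $q=0$ by Proposition \ref{prp:invariants-X}), property \eqref{itm:property-ext} reduces to: produce a torsion class $\beta\in H^{3}(\mathcal{S},\Z_{\ell}(1))$ whose image in $H^{3}(S_{\bar{\eta}},\Z_{\ell}(1))$ is a generator; one then takes for $\alpha_{0}$ a $\delta$-preimage of $\beta$, so that $\delta(\alpha_{0}|_{S_{\bar{\eta}}})=\beta|_{S_{\bar{\eta}}}$ is a generator.

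To make $H^{3}(\mathcal{S},\Z_{\ell}(1))$ explicit I would use that $\kappa[[t]]$ is strictly henselian: by proper base change $H^{i}(\mathcal{S},\Z/\ell^{m}(1))\cong H^{i}(S_{0},\Z/\ell^{m}(1))$, and passing to the limit (the groups being finite, so $\lim^{1}=0$) $H^{i}(\mathcal{S},\Z_{\ell}(1))\cong H^{i}(S_{0},\Z_{\ell}(1))$. For $S_{0}=V\cup W$ with $V\cap W=C$, Mayer--Vietoris together with $H^{3}(V,\Z_{\ell}(1))=H^{3}(\mathbb{P}^{2}_{\kappa},\Z_{\ell}(1))=0$ and $H^{3}(W,\Z_{\ell}(1))=0$ (Theorem \ref{thm:degeneration-X_{1,2,3}}) gives an exact sequence
$$H^{2}(V,\Z_{\ell}(1))\oplus H^{2}(W,\Z_{\ell}(1))\xrightarrow{\ \rho\ }H^{2}(C,\Z_{\ell}(1))\to H^{3}(\mathcal{S},\Z_{\ell}(1))\to 0,$$
with $H^{2}(C,\Z_{\ell}(1))\cong\Z_{\ell}$. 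On the $V$-summand $\rho$ is multiplication by $\deg_{C}\mathcal{O}_{V}(1)=n$; on the $W$-summand its image lies in $n\Z_{\ell}$, because $[C]$ is divisible by $n$ in $\Pic(W)$ --- indeed $\mathcal{O}_{W}(C)\cong\mathcal{L}^{\otimes n}$ for the line bundle $\mathcal{L}$ governing the degree-$n$ cyclic cover $\tilde{Y}\to W=\tilde{Y}/\tilde{\varphi}_{n}$ --- so that $D\cdot C=n(D\cdot\mathcal{L})$ for every class $D$ on $W$. Hence $\im(\rho)=\ell^{r}\Z_{\ell}$ and $H^{3}(\mathcal{S},\Z_{\ell}(1))\cong\Z/\ell^{r}$.

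The crux, and the step I expect to be the main obstacle, is producing $\beta$. The localisation (Gysin) sequence for $S_{0}\subset\mathcal{S}$, together with the identification of $H^{3}(S_{\bar{\eta}},\Z_{\ell}(1))$ with the $\ell$-primary part of $\NS(S_{\bar{\eta}})_{\tors}\cong\Z/n$, yields a lift $\beta\in H^{3}(\mathcal{S},\Z_{\ell}(1))$ of a generator of $H^{3}(S_{\bar{\eta}},\Z_{\ell}(1))$; the intricate point is that such a lift need not a priori be torsion --- equivalently, one must show $H^{3}(\mathcal{S},\Z_{\ell}(1))_{\tors}\to H^{3}(S_{\bar{\eta}},\Z_{\ell}(1))_{\tors}$ is surjective, so that $\beta$ comes from a Brauer class. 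This is exactly where the hypothesis ``up to base change, followed by a resolution of $\mathcal{S}$'' enters: a suitable ramified base change $\kappa[[t]]\hookrightarrow\kappa[[t^{1/N}]]$, which leaves $S_{\bar{\eta}}$ unchanged, followed by a resolution restoring regularity and strict semi-stability, should remove this obstruction at the cost of replacing the special fibre by a longer chain of components; one then re-verifies (or argues directly on the new model) that the geometric input used below survives. Granting this, $\alpha_{0}$ satisfies \eqref{itm:property-ext}.

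Finally I would upgrade $\alpha_{0}$ to an $\alpha$ also satisfying \eqref{itm:property-rest-zero}. For $V=\mathbb{P}^{2}_{\kappa}$ one has $\Br(V)=\Br(\kappa)=0$, so $\alpha|_{V}=0$ automatically; the content is the restriction to $W$ (for $n>2$ not ruled, and possibly with large Brauer group). Fix a lift $\tilde{\alpha}_{0}\in H^{2}(\mathcal{S},\mu_{\ell^{r}})$ of $\alpha_{0}$. Since $W$ is smooth with $H^{3}(W,\Z_{\ell}(1))=0$, the Bockstein sequence gives $H^{2}(W,\mu_{\ell^{r}})\cong H^{2}(W,\Z_{\ell}(1))/\ell^{r}$, so $\tilde{\alpha}_{0}|_{W}=\bar{\gamma}_{W}$ for some $\gamma_{W}\in H^{2}(W,\Z_{\ell}(1))$. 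Restricting further to $C$ one has $\tilde{\alpha}_{0}|_{C}=(\tilde{\alpha}_{0}|_{V})|_{C}$, which lies in $\im(H^{2}(V,\mu_{\ell^{r}})\to H^{2}(C,\mu_{\ell^{r}}))$, multiplication by $n\equiv 0\pmod{\ell^{r}}$, hence is zero; therefore $\gamma_{W}|_{C}\in\ell^{r}\Z_{\ell}=\im(H^{2}(V,\Z_{\ell}(1))\to H^{2}(C,\Z_{\ell}(1)))$. By the Mayer--Vietoris sequence of the previous step, $\gamma_{W}$ extends to some $\gamma\in H^{2}(\mathcal{S},\Z_{\ell}(1))$ with $\gamma|_{W}=\gamma_{W}$. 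Replacing $\tilde{\alpha}_{0}$ by $\tilde{\alpha}:=\tilde{\alpha}_{0}-\bar{\gamma}$ makes the restriction to $W$ vanish in $H^{2}(W,\mu_{\ell^{r}})$, hence in $\Br(W)$, while leaving $\delta$ unchanged (as $\bar{\gamma}$ is a reduction of an integral class), so \eqref{itm:property-ext} persists for the Brauer class $\alpha$ represented by $\tilde{\alpha}$. Carrying this out for each $i=1,\dots,s$ completes the plan.
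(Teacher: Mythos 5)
Your overall reduction is the same as the paper's: produce, prime by prime, a torsion class in $H^{3}(\mathcal{S},\Z_{\ell}(1))$ restricting to a generator of $H^{3}(S_{\bar{\eta}},\Z_{\ell}(1))_{\tors}\cong\Z/\ell^{r}$, then take a Bockstein preimage; and your treatment of property \eqref{itm:property-rest-zero} (correcting a lift in $H^{2}(\mathcal{S},\mu_{\ell^{r}})$ by the reduction of an integral class, using $H^{3}(W,\Z_{\ell}(1))=0$ and surjectivity of $H^{2}(\mathcal{S},\Z_{\ell}(1))\to H^{2}(W,\Z_{\ell}(1))$ via Mayer--Vietoris and the $n$-divisibility of $[C]$ in $\Pic(W)$) is essentially the paper's argument. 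Your computation $H^{3}(S_{0},\Z_{\ell}(1))\cong\Z/\ell^{r}$ for $S_{0}=V\cup W$ is also correct.

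However, there is a genuine gap at exactly the point you flag as ``the crux'': you assert that a suitable ramified base change and resolution ``should remove the obstruction'' to the surjectivity of $H^{3}(\mathcal{S},\Z_{\ell}(1))_{\tors}\to H^{3}(S_{\bar{\eta}},\Z_{\ell}(1))$, but you give no mechanism, and this is where essentially all of the paper's work lies. Two concrete problems. First, the Gysin sequence for $S_{0}\subset\mathcal{S}$ only relates $H^{3}(\mathcal{S},\Z_{\ell}(1))$ to $H^{3}(S_{\eta},\Z_{\ell}(1))$, not to $H^{3}(S_{\bar{\eta}},\Z_{\ell}(1))$; one must first descend a generator of $\Br(S_{\bar{\eta}})[\ell^{r}]$ to the generic fibre, which already forces a finite base change (the paper's opening reduction). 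Second, after any such base change the special fibre is no longer $V\cup W$ but a chain with many ruled components $R_{j}$, so your Mayer--Vietoris computation no longer applies and $H^{3}(\mathcal{S},\Z_{\ell}(1))$ acquires a large torsion-free part $\bigoplus_{j}H^{3}(R_{j},\Z_{\ell}(1))$; the lifting problem then becomes genuinely nontrivial. The paper resolves it by (i) a further $n^{2}{:}1$ base change so that the residues of the Brauer class along the ruled components are killed (Claim \ref{claim-1}), (ii) torsion-freeness of $H^{2}_{BM}(R,\Z_{\ell}(0))$ to extend the associated $\delta$-class over all of $\mathcal{S}$ (Claim \ref{claim-2}), (iii) the identification $H^{3}(\mathcal{S},\Z_{\ell}(1))_{\tors}\cong\Z/\ell^{r}$ on the new model (Claim \ref{claim-3}), and (iv) an analysis of the discrete-Laplacian-type map $\Phi$ on $H^{1}(C,A(0))^{\oplus N}$ (Lemmas \ref{properties: Pi}--\ref{Property-3}), whose cokernel $J(C)[\ell^{r}]$ with $r\geq 2r_{i}$ is what allows the lift to be corrected by $\ell_{i}^{r_{i}}\gamma$ into the torsion subgroup. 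None of these steps, nor substitutes for them, appear in your plan, so as written the proof of property \eqref{itm:property-ext} is incomplete.
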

    
\begin{remark}\label{rem:base-change} Let $\mathcal{S}\to\Spec R$ be a proper strictly semi-stable scheme over a discrete valuation ring $R$ with geometrically connected fibres of relative dimension $2$. When performing a finite ramified base change $\tilde{R}/R$, the model $\mathcal{S}_{\tilde{R}}:=\mathcal{S}\times_{R}\tilde{R}$ becomes singular. Following Hartl (proof of \cite[Proposition 2.2]{hartl}), one finds that the family $\mathcal{S}_{\tilde{R}}\to\Spec\tilde{R}$ can be made again into a strictly semi-stable $\mathcal{\tilde{S}}\to\Spec\tilde{R}$ by repeatedly blowing up all non-Cartier divisors of the special fibre. In particular, if $S_{0}=\bigcup_{i=1}^{m}S_{0i}$ is a chain, then so is the special fibre \begin{align}\label{special-fibre}
    \tilde{S}_{0}=S_{01}\cup (\bigcup_{j=1}^{t}R_{1,j})\cup S_{02}\cup (\bigcup_{j=1}^{t}R_{2,j})\cup \dots\cup S_{0m}\end{align} of the resulting model. The new components $R_{i,j}$ are all minimal ruled surfaces over $C_{i,i+1}:=S_{0i}\cap S_{0i+1}$ and we shall denote by $\pi_{i,j}\colon R_{i,j}\to C_{i,i+1}$ the corresponding projection. Moreover, the double intersections $S_{0i}\cap R_{i,1}$, $R_{i,t}\cap S_{0i+1}$ and $R_{i,j}\cap R_{i,j+1}$ are isomorphic to $C_{i,i+1}$ for all $1\leq i< m$ and all $1\leq j< t$, i.e., constitute sections.\end{remark}
\begin{proof}[Proof of Theorem \ref{thm:extending-brauer-classes-2}] Write $S:=S_{\eta}$ for the generic fibre. Recall by \cite[Section 2.2.2]{CS-brauer} $$\Br(S_{\bar{\eta}})=\lim_{\substack{\longrightarrow\\ \kappa((t))\subset K}}\Br(S_{K}),$$ where the direct limit runs over all finite field extensions $K/\kappa((t))$. Thus, we may find a finite extension $K$ of $\kappa((t))$, such that $\Br(S_{K})[n]\twoheadrightarrow \Br(S_{\bar{\eta}})[n]$ is surjective. Let $R\subset K$ be the normalization of $\kappa[[t]]$ in $K$. Then $R$ is a henselian discrete valuation ring that is finite over $\kappa[[t]]$. Consider the base change $\mathcal{S}_{R}\to\Spec R$. By \cite[Proposition 2.2]{hartl}, the family $\mathcal{S}_{R}\to\Spec R$ can be turned into strictly semi-stable by repeatedly blowing up all non Cartier components of the special fibre. Let $\tilde{\mathcal{S}}\to\Spec R$ denote the induced semi-stable model. The completion of $R$ is isomorphic to a formal power series ring over $\kappa$. Fix an isomorphism $\hat{R}\cong\kappa[[t']]$ and observe that the model $\tilde{\mathcal{S}}_{\kappa[[t']]}\to\Spec\kappa[[t']]$ remains strictly semi-stable (see \cite[Lemma 54.11.2]{stacks-project}). The invariance of \'etale cohomology under extensions of seperably closed ground fields then yields $\Br(S_{k'})[n]\twoheadrightarrow \Br(S_{\bar{k'}})[n]$ is also surjective, where $k':=\kappa((t'))$.\par It follows from the above discussion that we may assume $\Br(S_{\eta})[n]\twoheadrightarrow\Br(S_{\bar{\eta}})[n]$ is surjective. Note however that the special fibre $S_{0}$ takes the form $S_{0}=W\cup (\bigcup_{j=1}^{N} R_{j})\cup V$, where $N\geq 1$ and where the notation is consistent with \eqref{special-fibre}.

\par Pick a class $\alpha_{i}\in\Br(S_{\bar{\eta}})[\ell^{r_{i}}_{i}]$, such that $\delta(\alpha_{i})\in H^{3}(S_{\bar{\eta}},\Z_{\ell_{i}}(1))\cong\Z/\ell_{i}^{r_{i}}$ is a generator and denote by the same symbol a lift of this class on $\Br(S_{\eta})[\ell_{i}^{r_{i}}]$. We wish to show that $\alpha_{i}\in \Br(S_{\eta})[\ell_{i}^{r_{i}}]$ extends to an honest class on $\Br(\mathcal{S})[\ell_{i}^{r_{i}}]$ after a possible base change followed by a resolution of $\mathcal{S}$. As a first step, we prove the following:
\begin{claim}\label{claim-1} We may assume after a possible base change followed by a resolution of $\mathcal{S}$ that $\alpha_{i}\in\Br(S_{\eta})[\ell^{r_{i}}_{i}]$ extends to a class $\tilde{\alpha}_{i}\in\Br(\mathcal{S}\setminus \bigcup_{i=1}^{N}R_{j})[\ell_{i}^{r_{i}}]$, where we can take $N=dn^{r}-1$ for some integers $r\geq 2$ and $d>0$.\end{claim}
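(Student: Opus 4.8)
The approach is that the only obstruction to extending $\alpha_i$ from the generic fibre $S_\eta$ across the two outer components $W$ and $V$ of the special fibre is its ramification at the generic points $\eta_W$ and $\eta_V$, and that this ramification can be annihilated by any finite base change whose ramification index along $W$ and along $V$ is divisible by $\ell_i^{r_i}$; one then chooses the degree of the base change so that the number of intermediate ruled components has the prescribed shape $dn^r-1$. Note that we are allowed to ramify along the ruled components $R_j$, which will be dealt with in the subsequent claims.

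First I would set up the purity reduction. The scheme $\mathcal U:=\mathcal S\setminus\bigcup_{j}R_j$ is regular and integral of dimension three, and the generic fibre $S_\eta$ is the open subscheme $\mathcal U\setminus\bigl((W\cup V)\cap\mathcal U\bigr)=\mathcal S\setminus S_0$; consequently $\eta_W$ and $\eta_V$ are the only codimension-one points of $\mathcal U$ not already lying on $S_\eta$. Since $S_\eta$ is regular, the class $\alpha_i\in\Br(S_\eta)$ is unramified at every codimension-one point of $S_\eta$, so by purity for the Brauer group of a regular scheme (cf.\ \cite[Theorem 3.7.2]{CS-brauer}) the class $\alpha_i$ extends to $\Br(\mathcal U)[\ell_i^{r_i}]$ if and only if its residues $\partial_W(\alpha_i)$ and $\partial_V(\alpha_i)$ along the multiplicity-one components $W$ and $V$ of $S_0$ both vanish; note that these residues are $\ell_i^{r_i}$-torsion.

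Next I would perform a finite totally ramified base change of degree $e$ and restore strict semistability by the blow-up procedure of Remark \ref{rem:base-change}. Since $W$ and $V$ are the outer components of the chain and every component produced by this procedure is a ruled surface inserted between two consecutive old components, the strict transforms of $W$ and $V$ again occur with multiplicity one and retain their function fields; hence the valuation of the new model cutting out $W$ (resp.\ $V$) restricts to $e$ times the old one. By the standard functoriality of residue maps under a ramified base change — the residue scales by the ramification index, the residue field extension being trivial here — the residues of the pullback of $\alpha_i$ along $W$ and $V$ become $e\cdot\partial_W(\alpha_i)$ and $e\cdot\partial_V(\alpha_i)$. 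Taking $e$ divisible by $n$, hence by every $\ell_i^{r_i}$, forces both residues to vanish, so $\alpha_i$ extends to the complement of the ruled components of the new model by the previous paragraph. Moreover the base change leaves the geometric generic fibre unchanged, so $\NS(S_{\bar\eta})_{\tors}\cong\Z/n$ and $\delta(\alpha_i|_{S_{\bar\eta}})$ is still a generator, and the outer components continue to satisfy the conclusions of Theorem \ref{thm:degeneration-X_{1,2,3}}.

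Finally I would do the bookkeeping on the number of ruled components. If the chain currently has $N_0+2$ components, then after a degree-$e$ base change Remark \ref{rem:base-change} produces a chain with $(N_0+1)e-1$ ruled components. It therefore suffices to choose $e$ to be a multiple of $n$ for which $(N_0+1)e$ is of the form $d\,n^{r}$ with $r\geq 2$ — for instance $e=(N_0+1)n^{2}$, which gives $N=(N_0+1)^{2}n^{2}-1$. Renaming the resulting model $\mathcal S$, its ruled components $R_1,\dots,R_N$, and letting $\tilde\alpha_i\in\Br(\mathcal S\setminus\bigcup_{j}R_j)[\ell_i^{r_i}]$ be the extension just constructed, the claim follows. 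The delicate point is precisely this purity-plus-residue-scaling step: one must ensure that $W$ and $V$ genuinely survive the resolution of Remark \ref{rem:base-change} with multiplicity one and with unchanged residue fields, so that the residues are honestly multiplied by $e$ and no residue-field contribution enters the transfer formula.
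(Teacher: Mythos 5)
Your argument is correct and is essentially the paper's own proof: the paper likewise reduces to the purity/residue exact sequence for $\Br$ on the complement of the ruled components (so that the only obstructions are the residues along $W$ and $V$) and kills these residues by pulling back under the totally ramified base change $t\mapsto t^{n^{2}}$, which multiplies them by $n^{2}$ and produces $N'=(N+1)n^{2}-1$ ruled components after Hartl's resolution. The only cosmetic difference is your choice $e=(N_{0}+1)n^{2}$ instead of $e=n^{2}$; since $d$ is arbitrary in the statement, $e=n^{2}$ already yields $N$ of the form $dn^{2}-1$.
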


\begin{proof}[Proof of Claim 1] We perform the $n^{2}:1$ base change $t\mapsto t^{n^{2}}$, followed by resolving the singularities as above and let $f\colon\tilde{\mathcal{S}}\to\mathcal{S}_{\kappa[[t]]}\to\mathcal{S}$ be the induced semi-stable model. Then the special fibre $\tilde{S}_{0}$ has the form (see \eqref{special-fibre})\begin{align}
    \tilde{S}_{0}=W\cup(\bigcup_{j=1}^{n^{2}-1}R_{1,j})\cup R_{1}\cup (\bigcup_{j=1}^{n^{2}-1}R_{2,j})\cup R_{2} \cup \dots \cup R_{N}\cup (\bigcup_{j=1}^{n^{2}-1}R_{N +1,j})\cup V,\end{align} and the number of ruled components has increased to $N':=N + (N+1)(n^{2}-1)=n^{2}(N+1)-1$. We set $U:=\mathcal{S}\setminus \bigcup_{i=1}^{N}R_{j}$ and let $\tilde{U}:=f^{-1}(U)$ be the open subset obtained by removing the ruled components of $\tilde{S}_{0}$ from $\tilde{\mathcal{S}}$. By \cite[Theorem 3.7.5]{CS-brauer}, there is a commutative diagram\begin{center}
  \begin{tikzcd}
0 \arrow[r] & {\Br(U)[\ell_{i}^{r_{i}}]} \arrow[r] \arrow[d, "f^{*}"] & {\Br(S_{\eta})[\ell_{i}^{r_{i}}]} \arrow[r, "\partial"] \arrow[d, "f^{*}"] & {H^{1}(\kappa(V),\Z/\ell_{i}^{r_{i}})\times H^{1}(\kappa(W),\Z/\ell_{i}^{r_{i}}) } \arrow[d, "n^{2}\cdot f^{*}=0"] \\
0 \arrow[r] & {\Br(\tilde{U})[\ell_{i}^{r_{i}}]} \arrow[r]            & {\Br(\tilde{S}_{\eta})[\ell_{i}^{r_{i}}]} \arrow[r, "\partial"]            & {H^{1}(\kappa(V),\Z/\ell_{i}^{r_{i}})\times H^{1}(\kappa(W),\Z/\ell_{i}^{r_{i}}) },                      \end{tikzcd}\end{center} where the rows are exact (see \cite[Theorem 3.7.1]{CS-brauer}). It follows that the class $\tilde{\alpha}_{i}:=f^{*}\alpha_{i}\in\Br(\tilde{S}_{\eta})[\ell_{i}^{r_{i}}]$ extends on $\tilde{U}$. This proves the claim.\end{proof} Let $R:=\bigcup_{j=1}^{N}R_{j}\subset\mathcal{S}$ be the union of ruled components in the special fibre $S_{0}$ and consider the class $\bar{\alpha}_{i}:=\delta(\tilde{\alpha}_{i})\in H^{3}_{cont}(\mathcal{S}\setminus R,\Z_{\ell_{i}}(1))[\ell_{i}^{r_{i}}]$. We aim to show that $\bar{\alpha}_{i}$ lifts to a torsion class on $H^{3}_{cont}(\mathcal{S},\Z_{\ell_{i}}(1))$. To this end, consider the Gysin sequence \begin{align}\label{eq:Gysin-sequence-H^{3}}
    H^{1}_{BM}(R,\Z_{\ell_{i}}(0))\overset{\iota_{*}}{\to}H^{3}_{cont}(\mathcal{S},\Z_{\ell_{i}}(1))\to H^{3}_{cont}(\mathcal{S}\setminus R,\Z_{\ell_{i}}(1))\overset{\partial}{\to}H^{2}_{BM}(R,\Z_{\ell_{i}}(0))
\end{align}\begin{claim}\label{claim-2} The group $H^{2}_{BM}(R,\Z_{\ell_{i}}(0))$ is torsion free. Moreover, the push-forward maps $(\iota_{j})_{*}\colon H^{1}(R_{j},A(0))\to H^{1}_{BM}(R,A(0))$, where $A\in\{\Z/\ell_{i}^{m},\Z_{\ell_{i}},\Q_{\ell_{i}},\Q_{\ell_{i}}/\Z_{\ell_{i}}\}$ yield an isomorphism \begin{align}\label{eq:isomorphism-R}
    \bigoplus_{j=1}^{N} H^{1}(R_{j},A(0))\overset{\cong}{\to}H^{1}_{BM}(R,A(0)).\end{align}\end{claim}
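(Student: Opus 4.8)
The structure of $R=\bigcup_{j=1}^{N}R_j$ is that of a chain of minimal ruled surfaces, where consecutive components $R_j$ and $R_{j+1}$ meet along a common section $C_j:=R_j\cap R_{j+1}\cong C_{i,i+1}$, and non-consecutive components are disjoint. So my first step is to set up the Mayer--Vietoris / Gysin-type spectral sequence (or, more elementarily, the excision long exact sequences) computing $H^*_{BM}(R,A(0))$ from the components $R_j$ and their pairwise intersections $C_j$. Concretely, stratifying $R$ by the open subsets $R_j\setminus(C_{j-1}\cup C_j)$ and the curves $C_j$, one gets a long exact Borel--Moore sequence
\begin{align*}
\cdots\to\bigoplus_j H^{i}_{BM}(R_j\setminus(C_{j-1}\cup C_j),A(0))\to H^{i}_{BM}(R,A(0))\to\bigoplus_j H^{i}_{BM}(C_j,A(0))\to\cdots
\end{align*}
but it will be cleaner to iterate the basic open-closed triangle: if $R'=R_1\cup\cdots\cup R_{j-1}$ (closed in $R$, with complement the locally closed $R_j\setminus C_{j-1}$), excision gives $H^i_{BM}(R_j\setminus C_{j-1},A(0))$, and since $R_j$ is smooth projective of dimension $2$ one has $H^i_{BM}(R_j\setminus C_{j-1},A(0))\cong H^{i}(R_j,A(0))$ in the relevant low degrees (the curve $C_{j-1}$ only affects things in degree $\geq 2$ after the shift — this is where I must be careful with the exact degrees).

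The key inputs are: (i) each $R_j\to C_{i,i+1}$ is a $\mathbb{P}^1$-bundle, so $H^1(R_j,A(0))\cong H^1(C_{i,i+1},A(0))$ via $\pi_{i,j}^*$ (the projective bundle formula), and in particular $H^1(R_j,A(0))$ is a free/torsion-free $A$-module of rank $2g(C_{i,i+1})$ when $A$ ranges over $\Z_{\ell_i},\Q_{\ell_i}$ etc.; (ii) $H^0$ and $H^2$ of $R_j$ with these coefficients are accounted for by $A$ and $A(-1)$ plus the pullback of $H^2(C_{i,i+1})$ — the point being that $H^2(R_j,\Z_{\ell_i}(0))$ is again torsion-free (it is an extension of torsion-free groups). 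The combinatorics of the chain then show that the restriction maps $H^1_{BM}(R,A(0))\to\bigoplus_j H^1(R_j,A(0))$ (dually, the pushforwards $(\iota_j)_*$ assemble to a map in the other direction) and that the gluing curves $C_j$, meeting $R$ in the "diagonal" fashion of a chain, contribute nothing new in degree $1$ and only torsion-free pieces in degree $2$. For the second assertion of the claim, I would show the map $\bigoplus_j H^1(R_j,A(0))\to H^1_{BM}(R,A(0))$ is surjective because $H^0$ of the intersection curves surjects appropriately (chain, not cycle, so no $H^1$-obstruction), and injective because the only relations would come from $H^0(C_j)$-classes, which push forward to $0$ in $H^1$ by dimension/weight reasons (a class supported on a curve cannot be hit nontrivially here). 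The torsion-freeness of $H^2_{BM}(R,A(0))$ then follows from the long exact sequence, since it is squeezed between torsion-free groups $\bigoplus_j H^2(R_j)$ and (a subquotient of) $\bigoplus_j H^1(C_j)$, both torsion-free.

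**Main obstacle.** The delicate point is keeping track of the precise cohomological degrees and the Borel--Moore vs.\ ordinary cohomology bookkeeping: $R$ is proper (a closed subscheme of the special fibre), so Borel--Moore and ordinary cohomology of $R$ agree, but each individual $R_j$ sits in $R$ as a \emph{closed} subscheme while $R_j\setminus C_{j-1}$ is only locally closed, and excision/purity must be invoked with the right twists and shifts via \eqref{eq:borel-moore-support}. I expect the bulk of the work to be verifying that the connecting maps in the iterated open-closed sequences vanish in degree $1$ — i.e.\ that $H^0_{BM}(C_j,A(0))\to H^1_{BM}(R_1\cup\cdots\cup R_j,A(0))$ is zero — which is precisely where the \emph{chain} (as opposed to arbitrary dual graph) hypothesis is used: a cycle of ruled surfaces would produce an extra $H^1$ class, whereas a chain is cohomologically ``contractible'' in this range. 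Once that vanishing is in hand, both statements of the claim fall out of the resulting short exact sequences together with the projective-bundle formula and the torsion-freeness of $H^*(\text{smooth projective curve},\Z_{\ell_i}(\ast))$.
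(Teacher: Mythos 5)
Your plan follows the same route as the paper: peel off the components of the chain one at a time via the closed--open Borel--Moore exact sequence for the pair $(R^{j},R^{j-1})$ with $R^{j}:=R_{1}\cup\dots\cup R_{j}$, exploit that each double curve is a section of the adjacent ruled surface, and induct. Two steps of your sketch are not yet correct as stated, though. First, the degree-$1$ analysis does not rest on classes of $H^{0}(C,A(-1))$ pushing forward to zero; on the contrary, the Gysin pushforward $H^{0}(C,A(-1))\to H^{2}(R_{N},A(0))$ is \emph{injective}, because $C$ is a section of $\pi_{N}\colon R_{N}\to C$ and hence $(\pi_{N})_{*}\circ s_{*}=\id$. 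It is exactly this injectivity that forces $H^{1}(R_{N},A(0))\to H^{1}(R_{N}\setminus C,A(0))$ to be surjective (injectivity being automatic since $H^{-1}(C,A(-1))=0$), which in turn splits $H^{1}(R,A(0))$ as $\iota_{*}H^{1}(R^{N-1},A(0))\oplus\iota_{N*}H^{1}(R_{N},A(0))$ and gives \eqref{eq:isomorphism-R} by induction. Your ``dimension/weight'' heuristic should be replaced by this computation.

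Second, the torsion-freeness of $H^{2}_{BM}(R,\Z_{\ell_{i}}(0))$ does not follow from its being ``squeezed between torsion-free groups'': in your Mayer--Vietoris formulation the relevant subgroup is a \emph{cokernel} of a map of torsion-free $\Z_{\ell_{i}}$-modules, and such a cokernel can very well have torsion (multiplication by $\ell_{i}$ on $\Z_{\ell_{i}}$ already shows this). To close the gap you must either verify that $\bigoplus_{j}H^{1}(R_{j},\Z_{\ell_{i}}(0))\to\bigoplus_{j}H^{1}(C_{j},\Z_{\ell_{i}}(0))$ is surjective (which does hold for a chain whose double curves are sections, by solving recursively), or argue as the paper does: the surjectivity of $H^{1}(R,\Z_{\ell_{i}}(0))\to H^{1}(R_{N}\setminus C,\Z_{\ell_{i}}(0))$ obtained in the degree-$1$ step yields a short exact sequence
\begin{align*}
0\to H^{2}(R^{N-1},\Z_{\ell_{i}}(0))\to H^{2}(R,\Z_{\ell_{i}}(0))\to H^{2}(R_{N}\setminus C,\Z_{\ell_{i}}(0))\to 0,
\end{align*}
whose outer terms are torsion-free (the right-hand one being free of rank one on a fibre class, the left-hand one by induction), and an extension of a torsion-free group by a torsion-free group is torsion-free.
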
 \begin{proof}[Proof of Claim 2] We first prove \eqref{eq:isomorphism-R}. We proceed by induction on $N$. Clearly the isomorphism \eqref{eq:isomorphism-R} holds if $N=1$. Suppose $N\geq 2$ and set $R^{N-1}:=\bigcup_{j=1}^{N-1}R_{j}$. We claim that the push-forward map $\iota_{*}\colon H^{1}(R^{N-1},A(0))\to H^{1}(R,A(0))$ is injective. Indeed, by \cite[Definition 4.2 (P2)]{Sch-refined}, there is a commutative diagram \begin{center}\begin{tikzcd}
{H^{0}(R,A(0))} \arrow[r, two heads]                                            & {H^{0}(R_{N}\setminus C,A(0))} \arrow[r]                          & {H^{1}(R^{N-1},A(0))} \arrow[r, "\iota_{*}"] & {H^{1}(R,A(0))} \\
{A=H^{0}(R_{N},A(0))} \arrow[u, "(\iota_{N})_{*}"] \arrow[r, "\id"] & {{H^{0}(R_{N}\setminus C,A(0))=A},} \arrow[u, "\id"', shift left] &                                                    &                
\end{tikzcd}\end{center}
where the upper row is the Gysin sequence of the pair $(R,R^{N-1})$ and where $C:=C_{N-1,N}$. Exactness of this sequence then yields $\iota_{*}\colon H^{1}(R^{N-1},A(0))\to H^{1}(R,A(0))$ is injective.\par Consider the commutative diagram \begin{center}
    \begin{tikzcd}
0 \arrow[r] & {H^{1}(R^{N-1},A(0))} \arrow[r] & {H^{1}(R,A(0))} \arrow[r]                                                                     & {H^{1}(R_{N}\setminus C,A(0))}                  \\
            & 0 \arrow[u] \arrow[r]                 & {H^{1}(R_{N},A(0))} \arrow[u, "(\iota_{N})_{*}"', shift left] \arrow[r] \arrow[r] & {H^{1}(R_{N}\setminus C,A(0))}, \arrow[u, "\id"]
\end{tikzcd}
\end{center}
where the two rows are induced by the Gysin sequences of the pairs $(R,R^{N-1})$ and $(R_{N},C)$, respectively. To deduce \eqref{eq:isomorphism-R} thus remains to prove $H^{1}(R_{N},A(0))\to H^{1}(R_{N}\setminus C,A(0))$ is surjective. By exactness of the Gysin sequence of $(R_{N},C)$, it suffices to show $H^{0}(C,A(-1))\to H^{2}(R_{N},A(0))$ is injective. To see this just note that the composite $$H^{j}(C,A(m))\overset{s_{*}}\to H^{j+2}(R_{N},A(m+1))\overset{(\pi_{N})_{*}}{\to}H^{j}(C,A(m))$$ is the identity.\par Next we prove that $H^{2}_{BM}(R,\Z_{\ell_{i}}(0))$ is torsion free. In the above argument we have already seen that $H^{1}(R,\Z_{\ell_{i}}(0))\to H^{1}(R_{N}\setminus C,\Z_{\ell_{i}}(0))$ is surjective. Hence exactness of the Gysin sequence of $(R,R^{N-1})$ gives that $\iota_{*}\colon H^{2}(R^{N-1},\Z_{\ell_{i}}(0))\to H^{2}(R,\Z_{\ell_{i}}(0))$ is injective. As before we have a commutative diagram \begin{center}
   \begin{tikzcd}
0 \arrow[r] & {H^{2}(R^{N-1},\Z_{\ell_{i}}(0))} \arrow[r] & {H^{2}(R,\Z_{\ell_{i}}(0))} \arrow[r]                                                                                & {H^{2}(R_{N}\setminus C,\Z_{\ell_{i}}(0))}                      \\
            &                                                   & {H^{2}(R_{N},\Z_{\ell_{i}}(0))} \arrow[u, "(\iota_{N})_{*}"', shift left] \arrow[r] \arrow[r, two heads] & {{H^{2}(R_{N}\setminus C,\Z_{\ell_{i}}(0))},} \arrow[u, "\id"]
\end{tikzcd}
\end{center}
where the surjectivity of $H^{2}(R_{N},\Z_{\ell_{i}}(0))\to H^{2}(R_{N}\setminus C,\Z_{\ell_{i}}(0))$ follows again from the exactness of the Gysin sequence of $(R_{N},C)$ (and where we use that $H^{1}(C,\Z_{\ell_{i}}(-1))\to H^{3}(R_{N},\Z_{\ell_{i}}(0))$ is injective). Thus we obtain a short exact sequence $$0\to H^{2}(R^{N-1},\Z_{\ell_{i}}(0))\to H^{2}(R,\Z_{\ell_{i}}(0))\to H^{2}(R_{N}\setminus C,\Z_{\ell_{i}}(0))\to 0,$$ where $H^{2}(R^{N-1},\Z_{\ell_{i}}(0))$ is torsion free by the induction hypothesis and $H^{2}(R_{N}\setminus C,\Z_{\ell_{i}}(0))\cong \Z_{\ell_{i}}\cdot f$ is free of rank $1$, generated by the class of a fibre. This concludes the proof.\end{proof}

Since $H^{2}_{BM}(R,\Z_{\ell_{i}}(0))$ is torsion free by Claim \ref{claim-2}, we find that the residue $\partial\bar{\alpha}_{i}=0$ vanishes and so the class $\bar{\alpha}_{i}$ extends to $H^{3}_{cont}(\mathcal{S},\Z_{\ell_{i}}(1))$ (see \eqref{eq:Gysin-sequence-H^{3}}). The technical part of the proof is then to show that we can choose the lift $\bar{\alpha}_{i}\in H^{3}_{cont}(\mathcal{S},\Z_{\ell_{i}}(1))$ to be torsion. With this goal in mind, let us compute the torsion in $H^{3}_{cont}(\mathcal{S},\Z_{\ell_{i}}(1))$.

\begin{claim}\label{claim-3} There is a canonical extension \begin{align}\label{canonical-extension}
0\to\Z/\ell_{i}^{r_{i}}\to H^{3}_{cont}(\mathcal{S},\Z_{\ell_{i}}(1))\overset{\{\iota_{j}^{*}\}_{j}}{\to}\bigoplus_{j=1}^{N}H^{3}_{cont}(R_{j},\Z_{\ell_{i}}(1))\to 0.\end{align} In particular, $H_{cont}^{3}(\mathcal{S},\Z_{\ell_{i}}(1))_{\tors}\cong\Z/\ell_{i}^{r_{i}}$.\end{claim}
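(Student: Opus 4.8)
The plan is to compute $H^{3}_{cont}(\mathcal{S},\Z_{\ell_i}(1))$ by combining the proper base change theorem with the Mayer--Vietoris spectral sequence of the special fibre. Since $\mathcal{S}\to\Spec\kappa[[t]]$ is proper and $\kappa[[t]]$ is strictly henselian (henselian with separably closed residue field $\kappa$), proper base change gives $H^{j}(\mathcal{S},\mu_{\ell_i^{m}})\cong H^{j}(S_{0},\mu_{\ell_i^{m}})$ for all $j,m$; these groups are finite, so taking the inverse limit over $m$ (the $\lim^{1}$-terms vanish by Mittag--Leffler) yields $H^{j}_{cont}(\mathcal{S},\Z_{\ell_i}(1))\cong H^{j}_{cont}(S_{0},\Z_{\ell_i}(1))$. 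By Remark \ref{rem:base-change} the special fibre is a chain $S_{0}=W\cup R_{1}\cup\dots\cup R_{N}\cup V$ in which $W$ and $V=\mathbb{P}^{2}_{\kappa}$ are the (unchanged) two surfaces of Theorem \ref{thm:degeneration-X_{1,2,3}}, each $R_{j}$ is a minimal ruled surface over the smooth degree $n$ plane curve $C\subset V$, and every double curve ($W\cap R_{1}$, the $R_{j}\cap R_{j+1}$, and $R_{N}\cap V$) is isomorphic to $C$ and is a section of each ruled component containing it. One then feeds $S_{0}$ into the \v{C}ech/Mayer--Vietoris spectral sequence $E_{1}^{p,q}=\bigoplus_{|I|=p+1}H^{q}(\bigcap_{i\in I}(S_{0})_{i},\Z_{\ell_i}(1))\Rightarrow H^{p+q}(S_{0},\Z_{\ell_i}(1))$ of the cover by the irreducible components; as the dual graph is a chain there are no triple intersections, only the columns $p=0,1$ survive, the sequence degenerates at $E_{2}$, and in total degree $3$ it produces a short exact sequence $0\to E_{2}^{1,2}\to H^{3}(S_{0},\Z_{\ell_i}(1))\to E_{2}^{0,3}\to 0$.

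Identifying the right-hand term is immediate: $E_{2}^{0,3}=\ker(d_{1}\colon E_{1}^{0,3}\to E_{1}^{1,3})$, and since the $C_{k}$ are curves $E_{1}^{1,3}=0$, while $H^{3}(W,\Z_{\ell_i}(1))=0$ by Theorem \ref{thm:degeneration-X_{1,2,3}} (this is where $\ell_i\mid n$ enters) and $H^{3}(\mathbb{P}^{2},\Z_{\ell_i}(1))=0$, so $E_{2}^{0,3}=\bigoplus_{j=1}^{N}H^{3}(R_{j},\Z_{\ell_i}(1))$ with the edge map being precisely $\{\iota_{j}^{*}\}_{j}$. This gives the surjection of \eqref{canonical-extension} and identifies its kernel with $E_{2}^{1,2}$. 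Moreover the projective bundle formula gives $H^{3}(R_{j},\Z_{\ell_i}(1))\cong H^{1}(C,\Z_{\ell_i}(0))$, which is torsion free, so the ``in particular'' will follow once $E_{2}^{1,2}\cong\Z/\ell_i^{r_i}$ is established.

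The heart of the argument is then to show $E_{2}^{1,2}=\coker(d_{1}\colon E_{1}^{0,2}\to E_{1}^{1,2})\cong\Z/\ell_i^{r_i}$. Here $E_{1}^{1,2}=\bigoplus_{k=1}^{N+1}H^{2}(C_{k},\Z_{\ell_i}(1))\cong\Z_{\ell_i}^{N+1}$, with $d_{1}$ taking a tuple of classes on the components to the alternating differences of their restrictions to the double curves. Because the class of a ruling fibre of each $R_{j}$ restricts to $1$ on each adjacent section, $\im(d_{1})$ contains all differences $e_{k}-e_{k+1}$ of standard basis vectors, so $E_{2}^{1,2}$ is a quotient of $\Z_{\ell_i}$ through the total-degree functional $\Sigma\colon\Z_{\ell_i}^{N+1}\to\Z_{\ell_i}$ and equals $\Z_{\ell_i}/\Sigma(\im d_{1})$. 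I would show $\Sigma(\im d_{1})=n\Z_{\ell_i}$: the inclusion $\supseteq$ comes from $V=\mathbb{P}^{2}$, since $\mathcal{O}_{\mathbb{P}^{2}}(1)$ restricts to $C$ with degree $\deg C=n$; for $\subseteq$, the contribution of $W$ is generated by the integers $\deg(\mathcal{L}|_{C})$ with $\mathcal{L}\in\Pic(W)$ (the image of $H^{2}(W,\Z_{\ell_i}(1))$ in $H^{2}(C,\Z_{\ell_i}(1))\cong\Z_{\ell_i}$ is exactly this subgroup, as $\Br(C)=0$), and each $\deg(\mathcal{L}|_{C})=n\,(\mathcal{L}\cdot\mathcal{L}_{0})_{W}$, where $\mathcal{L}_{0}\in\Pic(W)$ satisfies $\mathcal{L}_{0}^{\otimes n}\cong\mathcal{O}_{W}(C)$ (the bundle defining the cyclic cover $\tilde Y\to W$ of Lemma \ref{lem:resolution-Y'}), hence is divisible by $n$; finally the contribution of each $R_{j}$ is generated by the numbers $\alpha\cdot(C_{j}-C_{j+1})$ for $\alpha\in H^{2}(R_{j},\Z_{\ell_i}(1))$, and since $C_{j},C_{j+1}$ are disjoint sections of $R_{j}$ one has $C_{j}-C_{j+1}=(C_{j}^{2})\,f_{j}$ in $\NS(R_{j})$ with $C_{j}^{2}$ a multiple of $n^{2}$ — every double curve lies in each adjacent component with normal bundle a tensor power of $\mathcal{N}_{C/W}$, of degree $-n^{2}$, by the structure of the semi-stable reduction (Remark \ref{rem:base-change}). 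Hence $\Sigma(\im d_{1})=n\Z_{\ell_i}=\ell_i^{r_i}\Z_{\ell_i}$, so $E_{2}^{1,2}\cong\Z/\ell_i^{r_i}$, which completes the plan.

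The step I expect to be the main obstacle is the last one, the exact determination $\Sigma(\im d_{1})=n\Z_{\ell_i}$: one must rule out any class of $\ell_i$-adic valuation smaller than $r_i$ slipping in. The two inputs that do the work are that $\mathcal{O}_{W}(C)$ is an $n$-th power in $\Pic(W)$ — so every line bundle on $W$ has degree divisible by $n$ on $C$ — which is precisely the cyclic-cover structure $\tilde Y\to W$ of Lemma \ref{lem:resolution-Y'}, and the fact, read off from the shape of the semi-stable reduction in Remark \ref{rem:base-change}, that every double curve has normal bundle a tensor power of $\mathcal{N}_{C/W}$, whose degree $-n^{2}$ is divisible by $n$.
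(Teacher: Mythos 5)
Your proposal is correct and follows essentially the same route as the paper: proper base change to the special fibre, the Mayer--Vietoris sequence of the chain (your \v{C}ech spectral sequence degenerates to exactly the paper's short exact sequence \eqref{MV}), and the identification of the kernel as the cokernel of the restriction map to $\bigoplus_{k} H^{2}(C_{k},\Z_{\ell_i}(1))\cong\Z_{\ell_i}^{N+1}$, computed via the summation functional using the fibre classes, the relation $C\sim nD$ on $W$ coming from the cyclic cover, and $\deg C=n$ in $V=\mathbb{P}^{2}$. The one point to phrase more carefully is the contribution of $W$: rather than arguing that the image of $H^{2}(W,\Z_{\ell_i}(1))$ in $H^{2}(C,\Z_{\ell_i}(1))$ equals the image of $\Pic(W)$ (your appeal to $\Br(C)=0$ does not control restrictions of transcendental classes), apply the projection formula to an arbitrary $\beta\in H^{2}(W,\Z_{\ell_i}(1))$ using $\cl^{1}(C)=n\,\cl^{1}(D)$, which is exactly what the paper does.
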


\begin{proof} The proper base change theorem (see \cite[Corollary 3.4]{milne}) combined with \cite[(0.2)]{jannsen} yield the canonical isomorphism $H^{3}_{cont}(\mathcal{S},\Z_{\ell_{i}}(1))\cong H^{3}_{cont}(S_{0},\Z_{\ell_{i}}(1))$. The Mayer-Vietoris sequence gives rise to the following short exact sequence\begin{align}\label{MV} 0\to K\to H^{3}_{cont}(S_{0},\Z_{\ell_{i}}(1))\overset{\{\iota_{j}^{*}\}_{j}}{\to} \bigoplus_{j=1}^{N}H^{3}(R_{j},\Z_{\ell_{i}}(1))\to 0,\end{align} where $K$ is defined as the cokernel of the map  \begin{align}\label{A} H^{2}(W,\Z_{\ell_{i}}(1))\oplus(\bigoplus_{j=1}^{N}H^{2}(R_{j},\Z_{\ell_{i}}(1)))\oplus H^{2}(V,\Z_{\ell_{i}}(1))\to \bigoplus_{j=0}^{N} H^{2}(C_{j,j+1},\Z_{\ell_{i}}(1))\cong \Z_{\ell_{i}}^{N+1}\end{align} and where $C_{j,j+1}$ stands for the double intersection of two consecutive components of the special fibre $S_{0}$, i.e. $C_{0,1}:=W\cap R_{1},\ C_{j,j+1}:=R_{j}\cap R_{j+1}$ for $1\leq j<N$ and $C_{N,N+1}:= R_{N}\cap V$.\par To show that $\{\iota_{j}^{*}\}_{j}$ in \eqref{MV} is onto, we use that $H^{3}(C_{j,j+1},\Z_{\ell_{i}}(1))=0$ (see \cite[Theorem VI.1.1]{milne}). The terms $H^{3}(W,\Z_{\ell_{i}}(1))$ and $H^{3}(V,\Z_{\ell_{i}}(1))$ do not appear in \eqref{MV}, as both vanish (see Theorem \ref{thm:degeneration-X_{1,2,3}}).\par It remains to compute $K$. We claim that the summation map $\Z_{\ell_{i}}^{N+1}\to\Z_{\ell_{i}}$ yields an isomorphism $K\cong \Z/\ell_{i}^{r_{i}}$. To this end, recall $H^{2}(R_{j},\Z_{\ell_{i}}(1))=\Z_{\ell_{i}}\cdot\cl^{1}([C_{j,j+1}])\times \Z_{\ell_{i}}\cdot f_{j}$, where $f_{j}$ denotes the cycle class of a fibre of the projection $\pi_{j}\colon R_{j}\to C_{1,2}$. Via \eqref{A} the tuple $(0,\ldots,0,f_{j},0,\ldots,0)$ maps to $$(0,\ldots,0,-f_{j}\cdot C_{j-1,j},f_{j}\cdot C_{j,j+1},0,\ldots,0)=(0,\ldots,0,-1,1,0,\ldots,0).$$ In addition, the self-intersection $C_{j,j+1}^{2}=\pm n^{2}$, i.e., it is a multiple of $\ell_{i}^{r_{i}}$, and $$\gcd (\gamma_{1}\cdot C_{0,1},\gamma_{2}\cdot C_{N,N+1})=\ell_{i}^{r_{i}},$$ where the cycles $\gamma_{1}$ and $\gamma_{2}$ run through $H^{2}(W,\Z_{\ell_{i}}(1))$ and $H^{2}(V,\Z_{\ell_{i}}(1))$, respectively. For the second claim recall that $W$ admits an $n$-fold cyclic covering branched along $C_{0,1}$ and so $C_{0,1}\sim n\cdot D$ for some divisor $D$, whereas $C_{N,N+1}$ sits as a degree $n$ plane curve in $V$. It follows that the summation map induces a well-defined map $K\twoheadrightarrow\Z/\ell_{i}^{r_{i}}$. The injectivity condition can be easily checked and thus details are omitted.\end{proof} 

Fix the projection $\pi_{j}\colon R_{j}\to C$ of the ruled surface $R_{j}$ for all $1\leq j\leq N$ and consider the composite \begin{align}\label{prod}
\Phi\colon H^{1}(C,A(0))^{\oplus N}\overset{\{\pi_{j}^{*}\}}{\to}\bigoplus_{j=1}^{N}H^{1}(R_{j},A(0))\overset{\{\iota_{j}^{*}\}\circ\{\iota_{j*}\}}{\to}\bigoplus_{j=1}^{N}H^{3}(R_{j},A(1))\overset{\{\pi_{j*}\}}{\to} H^{1}(C,A(0))^{\oplus N}.\end{align} We shall prove a series of properties of $\Phi$ that will be needed later.\begin{lemma}\label{properties: Pi} The map $\Phi$ is explicitly given by $$(\alpha_{1},\alpha_{2},\ldots,\alpha_{N-1},\alpha_{N})\mapsto (\alpha_{2}-2\alpha_{1},\alpha_{1}+\alpha_{3}-2\alpha_{2},\ldots,\alpha_{j-1}+\alpha_{j+1}-2\alpha_{j},\ldots,\alpha_{N-1}-2\alpha_{N}).$$\end{lemma}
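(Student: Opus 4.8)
The plan is to view $\Phi$ as an $N\times N$ matrix of endomorphisms of $H^1(C,A(0))$ and compute its entries one at a time. Writing $\Phi=\{\pi_{k\ast}\}\circ(\{\iota_k^\ast\}\circ\{\iota_{j\ast}\})\circ\{\pi_j^\ast\}$, the $(k,j)$-entry is $\Phi_{kj}:=\pi_{k\ast}\circ\iota_k^\ast\circ\iota_{j\ast}\circ\pi_j^\ast$, where $\iota_{j\ast}\colon H^1(R_j,A(0))\to H^3(\mathcal S,A(1))$ is the Gysin pushforward along the codimension-one closed immersion $R_j\hookrightarrow\mathcal S$ and $\iota_k^\ast$ is restriction to $R_k$. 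Since the special fibre of $\mathcal S$ is a chain (Remark \ref{rem:base-change}), one has $R_j\cap R_k=\emptyset$ whenever $|j-k|\geq 2$, and for such pairs the composite $\iota_k^\ast\iota_{j\ast}$ already vanishes; thus only the tridiagonal part of $\Phi$ survives, and it remains to pin down the diagonal and the two off-diagonals. Note that the end components $W$ and $V$ of the special fibre never occur as sources in $\Phi$ (the composite runs only over the ruled $R_j$); they will enter the computation only through the normal bundles of $R_1$ and of $R_N$, which is why the two end entries of the asserted formula have only two terms.

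For the diagonal I would use the self-intersection formula $\iota_j^\ast\iota_{j\ast}(\xi)=\xi\cup\cl^1(N_{R_j/\mathcal S})$, a standard property of the Gysin pushforward in the refined cohomology formalism of \cite{Sch-refined}. Because the special fibre $\tilde S_0=W\cup R_1\cup\dots\cup R_N\cup V$ is a principal divisor on the regular scheme $\mathcal S$, restricting $\mathcal O_{\mathcal S}(\tilde S_0)\cong\mathcal O_{\mathcal S}$ to $R_j$ and isolating the terms meeting $R_j$ gives $N_{R_j/\mathcal S}\cong\mathcal O_{R_j}(-C_{j-1,j}-C_{j,j+1})$, with the conventions $C_{0,1}=W\cap R_1$ and $C_{N,N+1}=R_N\cap V$. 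Both curves $C_{j-1,j}$ and $C_{j,j+1}$ are sections of $\pi_j\colon R_j\to C$, so the projection formula together with $\pi_j\circ(\text{section})=\id_C$ yields $\pi_{j\ast}\big(\pi_j^\ast\alpha\cup\cl^1([\sigma])\big)=\alpha$ for every section $\sigma$. Hence $\Phi_{jj}=-2\cdot\id$ for all $1\leq j\leq N$.

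For the off-diagonal entries $k=j\pm1$ I would use base change along the transverse cartesian square with vertices $C_{j,k}:=R_j\cap R_k$, $R_j$, $R_k$, $\mathcal S$ (the two components meet transversally along the smooth curve $C_{j,k}$), which gives $\iota_k^\ast\iota_{j\ast}(\xi)=\nu_{k\ast}(\nu_j^\ast\xi)$, where $\nu_j\colon C_{j,k}\hookrightarrow R_j$ and $\nu_k\colon C_{j,k}\hookrightarrow R_k$ are the inclusions. Since $C_{j,k}$ is a section of both $\pi_j$ and $\pi_k$, and the construction identifies all of these double curves with one and the same curve $C$ compatibly with the projections, one gets $\nu_j^\ast\pi_j^\ast=\id$ and $\pi_{k\ast}\nu_{k\ast}=\id$ on $H^1(C,A(0))$; hence $\Phi_{kj}=\id$ whenever $|j-k|=1$. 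Putting the three cases together, $\Phi$ sends $(\alpha_1,\dots,\alpha_N)$ to $(\alpha_{j-1}+\alpha_{j+1}-2\alpha_j)_{1\leq j\leq N}$ with the convention $\alpha_0=\alpha_{N+1}=0$, which is exactly the asserted formula.

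I expect the only genuinely delicate step to be the bookkeeping in the previous paragraph: one must check that the successive double curves $C_{0,1},C_{1,2},\dots,C_{N,N+1}$ are all identified with $C$ in a way compatible with every $\pi_j$, so that the diagonal contributes precisely $-2$ and each off-diagonal precisely $+1$, with no stray automorphism of $H^1(C,A(0))$ creeping in. This is exactly what the explicit description of the iterated blow-ups in Remark \ref{rem:base-change} is meant to provide. Everything else — the self-intersection formula, the projection formula for $\pi_j$, and transverse base change for Gysin maps — is formal in the cohomology theory of \cite{Sch-refined}.
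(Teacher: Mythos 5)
Your proposal is correct and follows essentially the same route as the paper: the paper likewise reduces to the tridiagonal entries via disjointness, computes $\iota_t^\ast[R_t]=-[C_{t-1,t}]-[C_{t,t+1}]$ from the triviality of $\mathcal{O}_{\mathcal{S}}(S_0)$ (equivalently, your normal-bundle computation), and evaluates the off-diagonal and diagonal terms by the projection formula applied to section classes. Your phrasing of the off-diagonal entries via transverse base change and of the diagonal via the self-intersection formula is just a repackaging of the paper's cup-product-with-the-fundamental-class argument, and your flagged bookkeeping point (all double curves are sections over the same $C$) is exactly what Remark \ref{rem:base-change} supplies.
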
\begin{proof} We shall frequently cite properties of cohomology with support from \cite[Appendix A]{Sch-moving}, which applies to our setting by \eqref{eq:borel-moore-support}. Let $\tilde{\Phi}:=\{\iota_{j}^{*}\}\circ\{\iota_{j*}\}\circ\{\pi_{j}^{*}\}$ and note that \begin{align}\label{tilde-prod}
    \tilde{\Phi}(\alpha_{1},\alpha_{2},\ldots,\alpha_{N-1},\alpha_{N})=(\iota_{1}^{*}(\sum_{j=1}^{N}\iota_{j*}\pi_{j}^{*}\alpha_{j}),\iota_{2}^{*}(\sum_{j=1}^{N}\iota_{j*}\pi_{j}^{*}\alpha_{j}),\ldots,\iota_{N}^{*}(\sum_{j=1}^{N}\iota_{j*}\pi_{j}^{*}\alpha_{j})).\end{align} 
    We compute $\iota_{t}^{*}(\sum_{j=1}^{N}\iota_{j*}\pi_{j}^{*}\alpha_{j})$ for all $1\leq t\leq N$. We have $$\iota_{t}^{*}(\sum_{j=1}^{N}\iota_{j*}\pi_{j}^{*}\alpha_{j})=\sum_{j=1}^{N}(\iota_{j*}\pi_{j}^{*}\alpha_{j}\cup [R_{t}])=\sum_{j=1}^{N}\iota_{j*}(\pi_{j}^{*}\alpha_{j}\cup \iota_{j}^{*}[R_{t}]),$$ where $[R_{t}]\in H^{0}(R_{t},\Z_{\ell_{i}}(0))=H^{2}_{R_{t},cont}(\mathcal{S},\Z_{\ell_{i}}(1))$ denotes the fundamental class and where in the last equality we use the projection formula (cf. \cite[Lemma A.19]{Sch-moving}). Since $R_{t}\cap R_{j}=\emptyset$ for $j\notin\{t-1,t,t+1\}$, we find \begin{align*}
        \iota_{1}^{*}(\sum_{j=1}^{N}\iota_{j*}\pi_{j}^{*}\alpha_{j})=\iota_{1*}(\pi_{1}^{*}\alpha_{1}\cup \iota_{1}^{*}[R_{1}])+\iota_{2*}(\pi_{2}^{*}\alpha_{2}\cup \iota_{2}^{*}[R_{1}])\\
         \iota_{t}^{*}(\sum_{j=1}^{N}\iota_{j*}\pi_{j}^{*}\alpha_{j})=\sum_{j=t-1}^{t+1}\iota_{j*}(\pi_{j}^{*}\alpha_{j}\cup \iota_{j}^{*}[R_{t}])\ \text{for}\ 1<t<N \\\iota_{N}^{*}(\sum_{j=1}^{N}\iota_{j*}\pi_{j}^{*}\alpha_{j})=\iota_{N-1*}(\pi_{N-1}^{*}\alpha_{N-1}\cup \iota_{N-1}^{*}[R_{N}])+\iota_{N*}(\pi_{N}^{*}\alpha_{N}\cup \iota_{N}^{*}[R_{N}]).
    \end{align*} Observe that $\iota_{t-1*}(\pi_{t-1}^{*}\alpha_{t-1}\cup \iota_{t-1}^{*}[R_{t}])=\iota_{t-1*}(\pi_{t-1}^{*}\alpha_{t-1}\cup [C_{t-1,t}])=s^{t}_{t-1*}(\alpha_{t-1})$, where $s^{t}_{t-1}$ denotes the section $C\to C_{t-1,t}\hookrightarrow R_{t}$. Similarly, one obtains $$\iota_{t+1*}(\pi_{t+1}^{*}\alpha_{t+1}\cup\iota_{t+1}^{*}[R_{t}])=s^{t}_{t+1*}(\alpha_{t+1}),$$ where $s^{t}_{t+1}$ is defined accordingly. It remains to compute $\iota_{t*}(\pi_{t}^{*}\alpha_{t}\cup\iota_{t}^{*}[R_{t}])$. By functoriality of the Gysin sequence for continuous \'etale cohomology with support, we get $\iota_{t}^{*}[R_{t}]=\iota_{t}^{*}\cl_{\mathcal{S}}^{1}([R_{t}])\in H^{2}(R_{t},\Z_{\ell_{i}}(1))$. Since $[V]+\sum_{j=1}^{N}[R_{j}]+[W]\sim 0$ on $\mathcal{S}$, we find $$\cl^{1}_{\mathcal{S}}([R_{t}])=-\cl^{1}_{\mathcal{S}}([V])-\sum_{j\neq t}\cl^{1}_{\mathcal{S}}([R_{j}])-\cl^{1}_{\mathcal{S}}([W]).$$ We thus deduce the equality $\iota_{t}^{*}([R_{t}])=-\cl^{1}_{R_{t}}([C_{t-1,t}])-\cl^{1}_{R_{t}}([C_{t,t+1}])$ for $1\leq t\leq N$. By making the above substitution, we get $$\iota_{t*}(\pi_{t}^{*}\alpha_{t}\cup\iota_{t}^{*}[R_{t}])=-\iota_{t*}(\pi_{1}^{*}\alpha_{1}\cup\cl^{1}_{R_{t}}([C_{t-1,t}]))-\iota_{t*}(\pi_{t}^{*}\alpha_{t}\cup\cl^{1}_{R_{1}}([C_{t,t+1}]))=-s^{t}_{t-1*}(\alpha_{t})-s^{t}_{t+1*}(\alpha_{t}),$$ where the last equality follows again by the projection formula. Finally, $\tilde{\Phi}(\alpha_{1},\alpha_{2},\ldots,\alpha_{N})$ is given by \begin{align*}(s^{1}_{2*}(\alpha_{2})-s^{1}_{0*}(\alpha_{1})-s^{1}_{2*}(\alpha_{1}),\ldots,s^{t}_{t-1*}(\alpha_{t-1})+s^{t}_{t+1*}(\alpha_{t+1})-s^{t}_{t-1*}(\alpha_{t})-s^{t}_{t+1*}(\alpha_{t}),\ldots)\end{align*} and pushing forward with respect to $\{\pi_{j}\}$ yields the desired element. The proof is complete.\end{proof}
    
    \begin{lemma}\label{property-2} Write $N=\tilde{d}\ell^{r}_{i}-1$ for some positive integers $r$ and $\tilde{d}$ with $\gcd (\tilde{d},\ell_{i})=1$. For $A=\Z_{\ell_{i}}$ the map $\Phi$ is injective. If $A=\Z/\ell_{i}^{r}$, then $\ker(\Phi)\cong J(C)[\ell_{i}^{r}]$.    
    \end{lemma}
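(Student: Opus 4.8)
The plan is to reduce everything to elementary linear algebra over the discrete valuation ring $\Z_{\ell_{i}}$, using the explicit formula for $\Phi$ from Lemma \ref{properties: Pi}. That formula shows that $\Phi$ acts coordinate-wise: if $M_{N}$ denotes the $N\times N$ integer matrix with $-2$ along the diagonal, $1$ along the two adjacent diagonals and $0$ elsewhere, then $\Phi=M_{N}\otimes_{\Z}\id_{H^{1}(C,A(0))}$ on $H^{1}(C,A(0))^{\oplus N}$. Two standard facts about the smooth projective curve $C$ will then suffice: by \eqref{eq:borel-moore} the group $H^{1}(C,\Z_{\ell_{i}}(0))\cong H^{1}_{cont}(C_{\et},\Z_{\ell_{i}}(0))$ is finite free over $\Z_{\ell_{i}}$, hence torsion free; and since $\kappa$ is algebraically closed, the Kummer sequence (using $\mu_{\ell_{i}^{r}}\cong\Z/\ell_{i}^{r}$) gives $H^{1}(C,\Z/\ell_{i}^{r}(0))\cong J(C)[\ell_{i}^{r}]$.

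First I would compute the Smith normal form of $M_{N}$ over $\Z$. Expanding along the first row gives the recursion $\det M_{k}=-2\det M_{k-1}-\det M_{k-2}$ with $\det M_{1}=-2$ and $\det M_{2}=3$, so $\det M_{N}=(-1)^{N}(N+1)$; on the other hand, deleting the first row and the last column of $M_{N}$ leaves an upper-triangular matrix with $1$'s along the diagonal, so some $(N-1)\times(N-1)$ minor of $M_{N}$ equals $\pm1$. Hence the Smith normal form of $M_{N}$ over $\Z$ is $\operatorname{diag}(1,\dots,1,N+1)$; equivalently, $M_{N}$ is up to sign the Cartan matrix of type $A_{N}$, whose cokernel is the cyclic group $\Z/(N+1)$. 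The hypothesis $N+1=\tilde d\,\ell_{i}^{r}$ with $\gcd(\tilde d,\ell_{i})=1$ now makes $\tilde d$ a unit in $\Z_{\ell_{i}}$, so over $\Z_{\ell_{i}}$ we may write $M_{N}=PDQ$ with $P,Q\in\GL_{N}(\Z_{\ell_{i}})$ and $D=\operatorname{diag}(1,\dots,1,\ell_{i}^{r})$.

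It then remains to read off the kernel of $D\otimes\id$ on $H^{1}(C,A(0))^{\oplus N}$, since $P\otimes\id$ and $Q\otimes\id$ are isomorphisms. Explicitly $(D\otimes\id)(h_{1},\dots,h_{N})=(h_{1},\dots,h_{N-1},\ell_{i}^{r}h_{N})$, so its kernel is $0\oplus\cdots\oplus0\oplus H^{1}(C,A(0))[\ell_{i}^{r}]$. For $A=\Z_{\ell_{i}}$ this group is $0$ because $H^{1}(C,\Z_{\ell_{i}}(0))$ is torsion free, so $\Phi$ is injective. For $A=\Z/\ell_{i}^{r}$ multiplication by $\ell_{i}^{r}$ vanishes identically, so the kernel is the full last summand $H^{1}(C,\Z/\ell_{i}^{r}(0))\cong J(C)[\ell_{i}^{r}]$; transporting back through $Q\otimes\id$ yields $\ker(\Phi)\cong J(C)[\ell_{i}^{r}]$.

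I do not expect a serious obstacle here beyond bookkeeping, now that Lemma \ref{properties: Pi} is available. The one point that genuinely matters is the \emph{cyclicity} of $\coker(M_{N}\colon\Z^{N}\to\Z^{N})$ and not merely the value of $\det M_{N}$: it is exactly this that makes the Smith normal form over $\Z_{\ell_{i}}$ equal to $\operatorname{diag}(1,\dots,1,\ell_{i}^{r})$, and hence forces $\ker(\Phi)$ to be a single copy of $H^{1}(C,\Z/\ell_{i}^{r}(0))$ rather than something larger.
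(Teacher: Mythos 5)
Your proof is correct, and it reaches the same conclusion by a different (more structural) route than the paper. The paper works directly with the formula of Lemma \ref{properties: Pi}: the kernel equations force $\alpha_j=j\alpha_1$ recursively, and the last equation then reads $-(N+1)\alpha_1=-\tilde d\,\ell_i^{r}\alpha_1=0$; torsion-freeness of $H^1(C,\Z_{\ell_i}(0))$ kills $\alpha_1$ in the first case, while in the second the map $\alpha\mapsto(\alpha,2\alpha,\dots,N\alpha)$ gives an explicit isomorphism $J(C)[\ell_i^{r}]\cong\ker(\Phi)$. You instead identify $\Phi$ with $M_N\otimes\id$ for the (negative of the) $A_N$ Cartan matrix, compute its Smith normal form $\operatorname{diag}(1,\dots,1,N+1)$ via $\det M_N=(-1)^N(N+1)$ and a unimodular $(N-1)$-minor, and read off the kernel after diagonalizing over $\Z_{\ell_i}$. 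Both arguments hinge on the same two inputs (the number $N+1=\tilde d\,\ell_i^{r}$ and the structure of $H^1(C,A(0))$), and your minor computation and induction check out. What your packaging buys is that you correctly isolate the cyclicity of $\coker(M_N)$ as the essential point, and the same Smith normal form immediately gives $\coker(\Phi)\cong H^1(C,A(0))/\ell_i^{r}\cong J(C)[\ell_i^{r}]$, i.e.\ Lemma \ref{Property-3}, for which the paper needs a separate and more laborious argument; what you lose is the explicit parametrization $(\alpha,2\alpha,\dots,N\alpha)$ of the kernel, though nothing later in the paper depends on it.
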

    
    \begin{proof} By Lemma \ref{properties: Pi}, we find that $(\alpha_{1},\alpha_{2},\ldots,\alpha_{N})\in\ker(\Phi)$ if and only if $\alpha_{j}=j\cdot\alpha_{1}$ for all $1\leq j\leq N$ and $\alpha_{N-1}-2\alpha_{N}=(N-1)\alpha_{1}-2N\alpha_{1}=-(N+1)\alpha_{1}=-\tilde{d}\ell^{r}_{i}\alpha_{1}=0$. Since $H^{1}(C,\Z_{\ell_{i}}(0))$ is torsion free, we conclude that $\Phi$ is injective in this case. For $A=\Z/\ell_{i}^{r}$, the map $$\alpha\mapsto (\alpha,2\alpha,\ldots,j\alpha,\ldots,N\alpha)$$ yields an isomorphism $J(C)[\ell_{i}^{r}]\cong\ker(\Phi)$, where we use that $J(C)[\ell_{i}^{r}]\cong H^{1}(C,\Z/\ell_{i}^{r})$.\end{proof}
    
    \begin{lemma}\label{Property-3} In the setting of Lemma \ref{property-2}, we have an isomorphism $\coker(\Phi)\cong J(C)[\ell_{i}^{r}]$ for $A\in\{\Z_{\ell_{i}},\Z/\ell^{r}_{i}\}$.\end{lemma}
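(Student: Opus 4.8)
The plan is to recognise $\Phi$ as (the negative of) the Cartan matrix of type $A_N$ tensored with $H^1(C,A(0))$, and then to reduce the statement to a one-line computation of the cokernel of that matrix.

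First I would set $M:=H^1(C,A(0))$ and write $R$ for the coefficient ring, so $R=\Z_{\ell_i}$ when $A=\Z_{\ell_i}$ and $R=\Z/\ell_i^r$ when $A=\Z/\ell_i^r$; then $M$ is an $R$-module and, under the identification $R^{\oplus N}\otimes_R M\cong M^{\oplus N}$, the explicit formula of Lemma \ref{properties: Pi} shows $\Phi=(-T)\otimes_R\id_M$, where $T$ is the $N\times N$ tridiagonal matrix with $2$ on the diagonal and $-1$ on the two adjacent diagonals, acting on $R^{\oplus N}$. Since $-\otimes_R M$ is right exact, applying it to the presentation of $\coker(-T)$ afforded by $-T\colon R^{\oplus N}\to R^{\oplus N}$ yields a canonical isomorphism $\coker(\Phi)\cong\coker(-T)\otimes_R M$, reducing everything to the computation of $\coker(T\colon R^{\oplus N}\to R^{\oplus N})$ (note $\coker(-T)=\coker(T)$ as $-1\in R^\times$).

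Next I would compute this cokernel directly. Writing $e_1,\dots,e_N$ for the images of the standard basis vectors, the $t$-th column of $T$ gives the relation $e_{t+1}=2e_t-e_{t-1}$ for $1\le t\le N-1$ (with $e_0:=0$), and the last column gives $2e_N-e_{N-1}=0$; an immediate induction yields $e_j=j\cdot e_1$ for all $j$, whence the last relation reads $(N+1)e_1=0$, so $\coker(T)\cong R/(N+1)$. By hypothesis $N+1=\tilde d\ell_i^r$ with $\gcd(\tilde d,\ell_i)=1$; therefore $R/(N+1)\cong\Z/\ell_i^r$ for both choices of $R$ ($\tilde d$ is a unit in $\Z_{\ell_i}$, while $N+1\equiv 0$ in $\Z/\ell_i^r$). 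Hence $\coker(\Phi)\cong(\Z/\ell_i^r)\otimes_R M$. For $A=\Z/\ell_i^r$ this is simply $M=H^1(C,\Z/\ell_i^r(0))\cong J(C)[\ell_i^r]$, the last isomorphism exactly as in Lemma \ref{property-2}; for $A=\Z_{\ell_i}$ it is $M/\ell_i^r M=H^1(C,\Z_{\ell_i}(0))/\ell_i^r$, which I would identify with $H^1(C,\Z/\ell_i^r(0))\cong J(C)[\ell_i^r]$ using the Bockstein sequence \eqref{eq:bockstein-sequence} and the torsion-freeness of $H^1(C,\Z_{\ell_i}(0))$ and $H^2(C,\Z_{\ell_i}(0))$.

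Given Lemma \ref{properties: Pi}, there is no genuine obstacle in this argument — it is pure linear algebra over $R$. The two points that call for a little care are, first, the step of phrasing $\Phi$ as an honest tensor product so that right-exactness of $-\otimes_R M$ applies verbatim, and second, bookkeeping the two coefficient regimes in the last step: the combinatorial matrix $T$ is the same in both, but the coefficient module $M=H^1(C,A(0))$ changes with $A$, so one must spell out separately why $(\Z/\ell_i^r)\otimes_R M$ equals $J(C)[\ell_i^r]$ in each case.
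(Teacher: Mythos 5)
Your argument is correct, and it takes a different route from the paper's, although the combinatorial core (the relations $e_j=j\,e_1$ and $(N+1)e_1=0$ coming from the tridiagonal shape of $\Phi$) is the same in both. You identify $\Phi$ with $(-T)\otimes_R\id_M$ for the type $A_N$ Cartan matrix $T$, compute $\coker(T)\cong R/(N+1)\cong\Z/\ell_i^r$ once and for all, and then use right-exactness of $-\otimes_R M$ to get $\coker(\Phi)\cong M/\ell_i^r M$, which you convert to $J(C)[\ell_i^r]$ via the Bockstein sequence (for this step only the torsion-freeness of $H^2(C,\Z_{\ell_i}(0))$ is needed, not that of $H^1$). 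The paper instead uses the same explicit relations only to show that $\coker(\Phi)$ is killed by $\ell_i^r$, and then runs the snake lemma on the $\times\ell_i^r$ diagram comparing $\Phi$ over $\Z_{\ell_i}$ with $\overline{\Phi}$ over $\Z/\ell_i^r$: since $\ker(\Phi)=0$ by Lemma \ref{property-2}, this yields $\coker(\Phi)\cong\ker(\overline{\Phi})\cong J(C)[\ell_i^r]$ and $\coker(\overline{\Phi})\cong\coker(\Phi)$, so both coefficient cases are settled by quoting the kernel computation of Lemma \ref{property-2}. Your version is more self-contained and treats both coefficient rings uniformly through a single Smith-normal-form computation; the paper's version avoids having to discuss tensor products and the Bockstein identification by recycling Lemma \ref{property-2}. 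Both are complete proofs.
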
\begin{proof} We begin by showing that $\coker(\Phi)$ is annihilated by $\ell_{i}^{r}$. Pick an element $\alpha\in H^{1}(C,\Z_{\ell_{i}}(0))$ and consider the tuple $\alpha_{j}\in H^{1}(C,\Z_{\ell_{i}}(0))^{\oplus N}$ whose $j$-entry is the element $\alpha$ and all the others are zero. A simple induction argument then yields $$\Phi(\sum_{j=1}^{m}j\cdot\alpha_{j})=(0,0,\ldots,0,-(m+1)\alpha,m\alpha,0,\ldots,0),$$ where the non-trivial entries of this tuple are $m$ and $m+1$ and where $1\leq m\leq N-1$. From this follows $\Phi(\sum_{j=1}^{N}j\cdot\alpha_{j})=(0,0,\ldots,0,-N\alpha,(N-1)\alpha)+\Phi(N\cdot\alpha_{N})=(0,\ldots,0,-(N+1)\alpha)=(0,\ldots,0,-\tilde{d}\ell_{i}^{r}\alpha)$. Since the integer $\tilde{d}$ is invertible in $\Z_{\ell_{i}}$, we find $$\Phi(-{(\tilde{d})}^{-1}\sum_{j=1}^{N}j\cdot\alpha_{j})=\ell_{i}^{r}\cdot\alpha_{N}.$$ Let $\beta:=-{(\tilde{d})}^{-1}\sum_{j=1}^{N}j\cdot\alpha_{j}$ and consider the system of equations \begin{align*}
        \Phi(\ell_{i}^{r}\alpha_{N})=\ell_{i}^{r}\cdot\alpha_{N-1}-2\Phi(\beta),\\ \Phi(\ell_{i}^{r}\alpha_{j})=\ell_{i}^{r}\cdot\alpha_{j-1}-2\ell_{i}^{r}\cdot\alpha_{j}+\ell_{i}^{r}\cdot\alpha_{j+1}\ \text{for}\ 2\leq j\leq N-1 \\
        \Phi(\ell_{i}^{r}\cdot\alpha_{1})=-2\ell_{i}^{r}\cdot\alpha_{1}+\ell_{i}^{r}\cdot\alpha_{2}.\end{align*} A recursive induction on the above system yields $\ell^{r}_{i}\cdot\alpha_{j}\in\im(\Phi)$ for all $1\leq j\leq N$. In particular, $\coker(\Phi)$ is $\ell_{i}^{r}$-torsion.\par Next we consider the commutative diagram with exact rows\begin{center}
            \begin{tikzcd}
0 \arrow[r] & {H^{1}(C,\Z_{\ell_{i}}(0))^{\oplus N}} \arrow[r, "\times\ell_{i}^{r}"] \arrow[d, "\Phi"] & {H^{1}(C,\Z_{\ell_{i}}(0))^{\oplus N}} \arrow[r] \arrow[d, "\Phi"] & {H^{1}(C,\Z/\ell_{i}^{r}(0))^{\oplus N}} \arrow[r] \arrow[d, "\overline{\Phi}"] & 0 \\
0 \arrow[r] & {H^{1}(C,\Z_{\ell_{i}}(0))^{\oplus N}} \arrow[r, "\times\ell_{i}^{r}"]                  & {H^{1}(C,\Z_{\ell_{i}}(0))^{\oplus N}} \arrow[r]                  & {H^{1}(C,\Z/\ell_{i}^{r}(0))^{\oplus N}} \arrow[r]                             & 0,
\end{tikzcd}
        \end{center}where $\overline{\Phi}$ stands for $\Phi$ with $A=\Z/\ell_{i}^{r}$. Since $\ker(\Phi)=0$, serpents lemma gives the exact sequence $$0\to\ker(\overline{\Phi})\to\coker(\Phi)\overset{\times\ell_{i}^{r}}{\to}\coker(\Phi)\to\coker(\overline{\Phi})\to0,$$ from which immediately one deduces the isomorphisms $$\ker(\overline{\Phi})\cong\coker(\Phi)\ \text{and}\ \coker(\Phi)\cong\coker(\overline{\Phi}).$$ To conclude, recall by Lemma \ref{property-2} that $\ker(\overline{\Phi})\cong J(C)[\ell_{i}^{r}]$. This finishes the proof.
        \end{proof}

 Finally, we are in the position to conclude the proof of Theorem \ref{thm:extending-brauer-classes-2}. Recall that we have a class $\bar{\alpha}_{i}\in H^{3}_{cont}(\mathcal{S},\Z_{\ell_{i}}(1))$ whose image in $H^{3}_{cont}(\mathcal{S}\setminus R,\Z_{\ell_{i}}(1))$ is killed by $\ell_{i}^{r_{i}}$ and further restricts to a generator $0\neq\delta(\alpha_{i})\in H^{3}(S_{\bar{\eta}},\Z_{\ell_{i}}(1))\cong\Z/\ell_{i}^{r_{i}}$. By Claim \ref{claim-3}, there is a canonical extension \begin{align}
            \label{extension}0\to\Z/\ell_{i}^{r_{i}}\to H^{3}_{cont}(\mathcal{S},\Z_{\ell_{i}}(1))\to\bigoplus_{j=1}^{N}H^{3}(R_{j},\Z_{\ell_{i}}(1))\to 0.\end{align} The push-forward map $\iota_{*}\colon H^{1}_{BM}(R,\Z_{\ell_{i}}(0))\to H^{3}_{cont}(\mathcal{S},\Z_{\ell_{i}}(1))$ can be identified with $$\bigoplus_{j=1}^{N}H^{1}(R_{j},\Z_{\ell_{i}}(0))\overset{\{\iota_{j*}\}}{\to}H^{3}(\mathcal{S},\Z_{\ell_{i}}(1))$$ (see \eqref{eq:isomorphism-R}), where the last map is injective by Lemma \ref{property-2}. Hence \eqref{extension} yields a short exact sequence \begin{align} \label{final-extension} 0\longrightarrow\Z/\ell_{i}^{r_{i}}\longrightarrow \frac{H^{3}_{cont}(\mathcal{S},\Z_{\ell_{i}}(1))}{\im(\iota_{*})}\overset{g:=\{\iota^{*}_{j}\}}{\longrightarrow} J(C)[\ell_{i}^{r}] \longrightarrow 0,
            \end{align} where we use $\coker(\{\iota^{*}_{j}\}\circ\{\iota_{j*}\})\cong\coker(\Phi)\cong J(C)[\ell_{i}^{r}]$ (see Lemma \ref{Property-3}) and where $r\geq 2r_{i}$ by Claim \ref{claim-1}.
            
            \par The element $[\bar{\alpha}_{i}]\in\coker(\iota_{*})\subset H^{3}(\mathcal{S}\setminus R,\Z_{\ell_{i}}(1))$ is $\ell_{i}^{r_{i}}$-torsion. Thus, we may write $g([\bar{\alpha}_{i}])=\ell_{i}^{r_{i}}c$ for some $c\in J(C)[\ell_{i}^{2r_{i}}]\subset J(C)[\ell_{i}^{r}]$. Pick a class $[\gamma]\in\coker(\iota_{*})$ with $g([\gamma])=c$. Then $g([\bar{\alpha}_{i}-\ell_{i}^{r_{i}}\gamma])=0$ and therefore $[\bar{\alpha}_{i}-\ell_{i}^{r_{i}}\gamma]=m[\tau]$ for some generator $\tau$ of $\Z/\ell_{i}^{r_{i}}\hookrightarrow H^{3}_{cont}(\mathcal{S},\Z_{\ell_{i}}(1))$ and integer $0\leq m<\ell_{i}^{r_{i}}$. It follows in $H^{3}_{cont}(\mathcal{S}\setminus R,\Z_{\ell_{i}}(1))$, we have the equality $\bar{\alpha}_{i}=\ell_{i}^{r_{i}}\gamma+m\tau$. Since $H^{3}(S_{\bar{\eta}},\Z_{\ell_{i}}(1))\cong\Z/\ell_{i}^{r_{i}}$, restricting further to $S_{\bar{\eta}}$ yields $\delta(\alpha_{i})=m\tau$. In particular, this implies that the restriction map $$\Z/\ell_{i}^{r_{i}}\cong H^{3}(\mathcal{S},\Z_{\ell_{i}}(1))_{\tors}\to H^{3}(S_{\bar{\eta}},\Z_{\ell_{i}}(1))$$ is isomorphism. Observe that any $\alpha\in H^{2}(\mathcal{S},\mu_{\ell^{r_{i}}_{i}})$, such that $\delta(\alpha)\in H^{3}(\mathcal{S},\Z_{\ell_{i}}(1))_{\tors}$ is a generator yields a Brauer class on $\mathcal{S}$ with the property, $\delta(\alpha|_{S_{\bar{\eta}}})$ generates $H^{3}(S_{\bar{\eta}},\Z_{\ell_{i}}(1))$. This proves \eqref{itm:property-ext}.
            
            \par It remains to show \eqref{itm:property-rest-zero}. We need only choose our lift $\alpha_{i}\in\Br(\mathcal{S})[\ell^{r_{i}}_{i}]$ from \eqref{itm:property-ext} in such a way that the restriction to $W$ vanishes, i.e. $\alpha_{i}|_{W}=0$, since the other components of $S_{0}$ are ruled. To this end, we consider the following commutative diagram with exact rows \begin{center}
                \begin{tikzcd}
0 \arrow[r] & {H^{2}(\mathcal{S},\Z_{\ell_{i}}(1))\otimes\Z/\ell_{i}^{r_{i}}} \arrow[r] \arrow[d, "\rho"] & {H^{2}(\mathcal{S},\mu_{\ell_{i}^{r_{i}}})} \arrow[r, "\delta"] \arrow[d, "\bar{\rho}"] & {H^{3}(\mathcal{S},\Z_{\ell_{i}}(1))_{\tors}} \arrow[r] \arrow[d, "=0"] & 0 \\
0 \arrow[r] & {H^{2}(W,\Z_{\ell_{i}}(1))\otimes\Z/\ell_{i}^{r_{i}}} \arrow[r]                             & {H^{2}(W,\mu_{\ell_{i}^{r_{i}}})} \arrow[r, "\delta"]                                   & {H^{3}(W,\Z_{\ell_{i}}(1))_{\tors}=0} \arrow[r]                         & 0.
\end{tikzcd}
            \end{center}
    We claim that both restriction maps $\rho$ and $\bar{\rho}$ are surjective. Clearly, it suffices to prove surjectivity only for $\rho$. The proper base change theorem (see \cite[Corollary 3.4]{milne}) implies $H^{2}(\mathcal{S},\Z_{\ell_{i}}(1))\cong H^{2}(S_{0},\Z_{\ell_{i}}(1))$. The Mayer--Vietoris exact sequence then yields \begin{align}\label{MV-2}
        H^{2}(S_{0},\Z_{\ell_{i}}(1))\to H^{2}(W,\Z_{\ell_{i}}(1))\oplus(\bigoplus_{j=1}^{N}H^{2}(R_{j},\Z_{\ell_{i}}(1))\oplus H^{2}(V,\Z_{\ell_{i}}(1))\to \bigoplus_{j=0}^{N}H^{2}(C_{j,j+1},\Z_{\ell_{i}}(1)). \end{align} 
        Recall that $W$ admits an $n$-fold cyclic branched covering along $C:=C_{0,1}\subset W$ and so $C\sim_{\rat} n\cdot D$ for some divisor $D$ on $W$. Pick a class $\beta\in H^{2}(W,\Z_{\ell_{i}}(1))$ and consider the intersection number $d:=(D\cdot\beta)\in\Z$. Let $f_{j}\in H^{2}(R_{j},\Z_{\ell_{i}}(1))$ be the class of a fibre of the projection $\pi_{j}\colon R_{j}\to C$ and let $H\in H^{2}(V,\Z_{\ell_{i}}(1))$ be the class of a hyperplane. It is easy to check that the tuple $(\beta,(nd)f_{1},\ldots,(nd)f_{N},dH)$ maps to zero via the last map of \eqref{MV-2}. Exactness of \eqref{MV-2} thus yields an element $\tilde{\beta}\in H^{2}(S_{0},\Z_{\ell_{i}}(1))$, such that $\tilde{\beta}|_{W}=\beta$. In particular, we find that $\rho$ is surjective.
        
        \par Assume that a lift $\alpha_{i}\in H^{2}(\mathcal{S},\mu_{\ell_{i}^{r_{i}}})$ such that $\delta(\alpha_{i})\in H^{3}(\mathcal{S},\Z_{\ell_{i}}(1))_{\tors}$ is a generator, satisfies $0\neq\alpha_{i}|_{W}\in H^{2}(W,\mu_{\ell_{i}^{r_{i}}})$. Since $\rho$ is surjective and $H^{2}(W,\Z_{\ell_{i}}(1))\otimes\Z/\ell_{i}^{r_{i}}\cong H^{2}(W,\mu_{\ell_{i}^{r_{i}}})$, we can choose $\tilde{\alpha}_{i}\in H^{2}(\mathcal{S},\Z_{\ell_{i}}(1))\otimes\Z/\ell_{i}^{r_{i}}$, such that $\tilde{\alpha}_{i}|_{W}=\alpha_{i}|_{W}$. Moreover, we have $\delta(\alpha_{i}-\tilde{\alpha}_{i})=\delta(\alpha_{i})$ and thus, replacing the Brauer class of $\alpha_{i}$ by the Brauer class of $\alpha_{i}-\tilde{\alpha}_{i}$, we obtain the desired lift. The proof of Theorem \ref{thm:extending-brauer-classes-2} is finally complete.\end{proof}
        
\begin{proof}[Proof of Theorem \ref{thm:extend-Brauer-class}] Let $n=\ell_{1}^{r_{1}}\ell^{r_{2}}_{2}\dots\ell^{r_{s}}_{s}$ be the prime factorization. By Theorem \ref{thm:extending-brauer-classes-2}, there is a semi-stable degeneration $\mathcal{S}\to\Spec\kappa[[t]]$ having the properties \eqref{itm:generic-fibre}, \eqref{itm:special-fibre} and in addition for all $1\leq i\leq s$, there is a class $\alpha_{i}\in\Br(\mathcal{S})[\ell^{r_{i}}_{i}]$, such that $\delta(\alpha_{i}|_{S_{\bar{\eta}}})\in H^{3}(S_{\bar{\eta}},\Z_{\ell_{i}}(1))_{\tors}\cong\Z/\ell^{r_{i}}_{i}$ is a generator and such that for every component $S_{0i}$ of the special fibre, $\alpha_{i}|_{S_{0i}}=0\in\Br(S_{0i})$. It follows that the class $\alpha:=\sum_{i=1}^{s}\alpha_{i}\in\Br(\mathcal{S})[n]$ satisfies \eqref{it:extend-brauer-class}. The proof is complete.\end{proof}

\begin{corollary}\label{cor:injectivity-exterior-product} Let $\kappa$ be an algebraically closed field of characteristic zero and let $n\geq 2$ be an integer. Let $\mathcal{S}\to\Spec\kappa[[t]]$ denote the degeneration from Theorem \ref{thm:extend-Brauer-class} and fix an algebraic closure $k$ of the fraction field $\kappa((t))$. Then the following holds: For any smooth projective variety $Z$ over $\kappa$ the exterior product map \begin{align}\label{eq:exterior-product}
\CH^{j}(Z)/n\longrightarrow \CH^{j+1}(S_{\bar{\eta}}\times_{k}Z_{k})/n,\ [z]\mapsto [\mathcal{L}\times z]\end{align} is injective, where $0\neq\mathcal{L}\in\NS(S_{\bar{\eta}})_{\tors}=\Pic(S_{\bar{\eta}})_{\tors}\cong\Z/n$ is any generator.
\end{corollary}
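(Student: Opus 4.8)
The plan is to obtain the corollary by a direct application of Theorem~\ref{thm:injectivity-exterior-product-a} to the family $\mathcal{X}:=\mathcal{S}\to\Spec\kappa[[t]]$ produced by Theorem~\ref{thm:extend-Brauer-class}; the only work is to check that this $\mathcal{S}$ verifies the hypotheses of that theorem and then to match up the conclusions.

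First I would record that $\mathcal{S}\to\Spec\kappa[[t]]$ is proper strictly semi-stable in the sense of~\S\ref{subsec:semi-stable}. By Theorem~\ref{thm:extend-Brauer-class} the total space $\mathcal{S}$ is regular, flat and projective over $\kappa[[t]]$ with geometrically connected fibres of relative dimension two; its geometric generic fibre $S_{\bar{\eta}}$ is a smooth projective surface, hence the generic fibre is geometrically smooth and therefore smooth; and by Theorem~\ref{thm:extend-Brauer-class}\eqref{itm:special-fibre} the special fibre $S_0=\sum_i S_{0i}$ is a reduced simple normal crossing divisor whose dual graph is a chain. Consequently the components $S_{0i}$ are smooth Cartier divisors on the regular threefold $\mathcal{S}$, each double intersection $S_{0i}\cap S_{0i+1}$ is a smooth irreducible curve (so smooth and equi-dimensional of codimension $2$ in $\mathcal{S}$), and any intersection of three or more distinct components is empty --- which is precisely the list of conditions in~\S\ref{subsec:semi-stable}. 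Next I would note that the remaining input of Theorem~\ref{thm:injectivity-exterior-product-a} is furnished verbatim by Theorem~\ref{thm:extend-Brauer-class}: part~\eqref{itm:generic-fibre} gives that $S_{\bar{\eta}}$ is a surface with $\NS(S_{\bar{\eta}})_{\tors}\cong\Z/n$, and part~\eqref{it:extend-brauer-class} supplies a class $\alpha\in\Br(\mathcal{S})[n]$ with $\delta(\alpha|_{S_{\bar{\eta}}})$ a generator of $\bigoplus_{\ell|n}H^3(S_{\bar{\eta}},\Z_\ell(1))_{\tors}\cong\Z/n$ and with $\alpha|_{S_{0i}}=0$ for every component $S_{0i}$; these are exactly conditions (P1) and (P2). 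Theorem~\ref{thm:injectivity-exterior-product-a} then applies and gives that the exterior product map~\eqref{eq:exterior-product} is injective for every smooth projective $\kappa$-variety $Z$ and every generator $\mathcal{L}\in\NS(S_{\bar{\eta}})_{\tors}$.

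It remains to justify the identification $\NS(S_{\bar{\eta}})_{\tors}=\Pic(S_{\bar{\eta}})_{\tors}$ used in the statement. Since $S_{\bar{\eta}}$ is a quotient $Y/\varphi_n$ of a smooth complete intersection surface by Theorem~\ref{thm:degeneration-X_{1,2,3}}, Proposition~\ref{prp:invariants-X} gives $q(S_{\bar{\eta}})=0$, so $\Pic^0(S_{\bar{\eta}})=0$ and hence $\Pic(S_{\bar{\eta}})=\NS(S_{\bar{\eta}})$; in particular $\Pic(S_{\bar{\eta}})_{\tors}=\NS(S_{\bar{\eta}})_{\tors}\cong\Z/n$, so the generator $\mathcal{L}$ above is simultaneously a generator of $\Pic(S_{\bar{\eta}})_{\tors}$ and thus legitimately serves as the divisor class entering the exterior product in the corollary. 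I do not expect any genuine obstacle here: the substance is already contained in Theorems~\ref{thm:injectivity-exterior-product-a} and~\ref{thm:extend-Brauer-class}, and the only point deserving a moment's care is that the special fibre of $\mathcal{S}$ --- a reduced simple normal crossing divisor with chain dual graph on a regular total space, as delivered by Theorem~\ref{thm:extend-Brauer-class} --- does fall under the definition of ``proper strictly semi-stable'' invoked in Theorem~\ref{thm:injectivity-exterior-product-a}.
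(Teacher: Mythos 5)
Your proposal is correct and follows essentially the same route as the paper, which likewise deduces the corollary by observing that properties (P1) and (P2) of Theorem \ref{thm:injectivity-exterior-product-a} are exactly \eqref{itm:extend-class} and \eqref{itm:rest-zero} of Theorem \ref{thm:extend-Brauer-class}. Your additional verifications (strict semi-stability of $\mathcal{S}$ and the identification $\NS(S_{\bar{\eta}})_{\tors}=\Pic(S_{\bar{\eta}})_{\tors}$ via $q(S_{\bar{\eta}})=0$) are correct and merely make explicit what the paper leaves implicit.
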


\begin{proof} The properties $(\text{P}1)$ and $(\text{P}2)$ in Theorem \ref{thm:injectivity-exterior-product-a} correspond to \eqref{itm:extend-class} and \eqref{itm:rest-zero} in Theorem \ref{thm:extend-Brauer-class}, respectively. Thus, the result follows immediately from Theorem \ref{thm:injectivity-exterior-product-a}.\end{proof}

\section{Proof of the main results}
Theorem \ref{thm:injectivity-result} will be deduced from the following.

\begin{theorem}\label{thm:injectivity-result-2} Let $n\geq 2$ be an integer. Let $Z$ be a complex smooth projective variety. Then there is a smooth complete intersection $Y\subset\mathbb{P}^{5}_{\mathbb{C}}$ of three invariant hypersurfaces $H_{i}\in|\mathcal{O}_{\mathbb{P}^{5}}(n)|^{\varphi_{n}}$, such that the automorphism $\varphi_{n}$ (see \eqref{eq:varphi}) acts freely on $Y$ and such that the following holds: If we set $S:=Y/\varphi_{n},$ then the exterior product map \begin{align}\label{ext-prod-A^{i}}
    \CH^{j}(Z)/n\longrightarrow \CH^{j+1}(S\times Z)/n,\ [z]\mapsto [\mathcal{L}\times z]\end{align} is injective, where $0\neq\mathcal{L}\in \Pic(S)_{\tors}\cong\Z/n$ is any generator.\end{theorem}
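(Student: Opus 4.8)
The plan is to deduce Theorem~\ref{thm:injectivity-result-2} from Corollary~\ref{cor:injectivity-exterior-product}, which already supplies the injectivity of the exterior product \emph{restricted to the subgroup} $E^{j}_{n}(Z)\subseteq A^{j}(Z)/n$, for a quotient $S_{\bar\eta}=Y_{\bar\eta}/\varphi_{n}$ defined over $k=\overline{\kappa((t))}$. Two things remain to be done: \textbf{(i)} realise $S_{\bar\eta}/k$ by an honest smooth complex surface, and \textbf{(ii)} enlarge the source of the injective map from $E^{j}_{n}(Z)$ to all of $A^{j}(Z)/n$.

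For \textbf{(i)} I would take $\kappa=\mathbb{C}$ in Corollary~\ref{cor:injectivity-exterior-product}. By the proof of Theorem~\ref{thm:degeneration-X_{1,2,3}} the total space $\mathcal{S}\to\Spec\mathbb{C}[[t]]$ is cut out by equations with coefficients in $\overline{\mathbb{Q}}[t]$ (cf.\ Example~\ref{ex:lemma}), so its generic fibre spreads out to a smooth projective family $\mathcal{S}_{U}\to U$ over a dense open $U\subseteq\mathbb{A}^{1}_{\mathbb{C}}$, all of whose closed fibres have the form $Y_{b}/\varphi_{n}$ with $Y_{b}\subseteq\mathbb{P}^{5}_{\mathbb{C}}$ a smooth complete intersection of three invariant hypersurfaces on which $\varphi_{n}$ acts freely. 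After shrinking $U$ (and possibly passing to a finite \'etale cover) a generator of the torsion of the Picard group of the geometric generic fibre spreads out to an $n$-torsion class over $U$, and $\NS(\cdot)_{\tors}\cong\Z/n$ persists for a very general member. Since $\CH^{*}$—and hence $A^{*}(\cdot)/n$—can only grow under extensions of algebraically closed fields (so that such extensions carry retractions by specialisation of cycles), the injectivity of Corollary~\ref{cor:injectivity-exterior-product} at the geometric generic point of $U$ specialises to a very general $b\in U(\mathbb{C})$: for $S:=Y_{b}/\varphi_{n}$ the map $E^{j}_{n}(Z)/n\to A^{j+1}(S\times Z)/n$, $[z]\mapsto[\mathcal{L}\times z]$, is injective for every smooth complex projective $Z$. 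Making this work for a \emph{single} $b$ uniformly over all $Z$, while keeping track of the various base-field changes, is the part of the argument that requires the most care.

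For \textbf{(ii)}, let $\bar z\in A^{j}(Z)/n$ satisfy $[\mathcal{L}\times z]=0$ in $A^{j+1}(S\times Z)/n$; I claim $\bar z\in E^{j}_{n}(Z)$, after which \textbf{(i)} gives $\bar z=0$. Applying the cycle class map and its compatibility with exterior products yields $\pr_{S}^{*}\cl^{1}_{S}(\mathcal{L})\cup\pr_{Z}^{*}\cl^{j}_{Z}(z)=0$ in $H^{2j+2}(S\times Z,\mu_{n}^{\otimes(j+1)})$. Here $\cl^{1}_{S}(\mathcal{L})\in H^{2}(S,\mu_{n})$ has order exactly $n$: it is the image of the order-$n$ class $[\mathcal{L}]\in\Pic(S)/n$ under the injective Kummer map~\eqref{eq:kummer}, and $[\mathcal{L}]$ does have order $n$ because $q(S)=0$ forces $\Pic(S)=\NS(S)\cong\Z^{\rho}\oplus\Z/n$ with $\mathcal{L}$ a torsion generator. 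An element of order exactly $n$ in an $n$-torsion abelian group is divisible by no prime factor of $n$, so under the perfect \'etale Poincar\'e pairing $H^{2}(S,\mu_{n})\times H^{2}(S,\mu_{n})\to H^{4}(S,\mu_{n}^{\otimes 2})\cong\Z/n$ there is a class $c^{\vee}\in H^{2}(S,\mu_{n})$ with $\cl^{1}_{S}(\mathcal{L})\cup c^{\vee}=[\mathrm{pt}]$, the fundamental class of a point. Cupping the displayed relation with $\pr_{S}^{*}c^{\vee}$ and then pushing forward along $\pr_{Z}\colon S\times Z\to Z$, the projection formula together with $(\pr_{Z})_{*}\pr_{S}^{*}[\mathrm{pt}]=1$ gives $\cl^{j}_{Z}(z)=0$ in $H^{2j}(Z,\mu_{n}^{\otimes j})$, i.e.\ $\bar z\in E^{j}_{n}(Z)$, as claimed. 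Running this for every $j$ and every $Z$ finishes the proof.

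I expect step \textbf{(i)}, rather than the short formal computation of step \textbf{(ii)}, to be the main obstacle: one must commit to one complex model $S$ and transfer the injectivity of Corollary~\ref{cor:injectivity-exterior-product} to it uniformly in $Z$, which rests on a careful spreading-out argument and on the growth of Chow groups under extensions of algebraically closed fields; by contrast, the only substantive input of \textbf{(ii)} is that $\cl^{1}_{S}(\mathcal{L})$ is genuinely of order $n$, which is exactly where the hypothesis $\NS(S)_{\tors}\cong\Z/n$ is used.
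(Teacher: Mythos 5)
Your step \textbf{(ii)} is exactly the first half of the paper's own proof: one shows the kernel of \eqref{ext-prod-A^{i}} lands in $E^{j}_{n}(Z)$ by producing a class $\bar{\alpha}\in H^{2}(S,\mu_{n})$ with $\cl^{1}_{S}(\mathcal{L})\cup\bar{\alpha}=\cl^{2}_{S}(pt)$ and applying the projection formula. Your justification that $\cl^{1}_{S}(\mathcal{L})$ has exact order $n$ (so that it pairs onto a generator of $\Z/n$) is, if anything, spelled out more carefully than in the paper. This part is fine.

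The gap is in step \textbf{(i)}. Taking $\kappa=\C$ forces you to compare the geometric generic fibre of the family (living over $\overline{\C((t))}$, or after spreading out over $\overline{\C(t)}$) with a very general \emph{closed} fibre $S_{b}$, $b\in U(\C)$. The principle you invoke --- that $A^{*}(\cdot)/n$ injects under extensions of algebraically closed fields --- points in the wrong direction for this: it lets you pass from a field to a \emph{larger} algebraically closed field, whereas passing from the geometric generic point of $U$ to a closed point is a specialization, under which Chow groups modulo algebraic equivalence need not inject. The usual substitute, an abstract field isomorphism $\sigma\colon\overline{\C(t)}\to\C$ identifying $S_{\bar{\eta}}$ with $S_{b}$, cannot be made compatible with the \emph{fixed} complex variety $Z$: no such $\sigma$ restricts to the identity on $\C$ (the element $t$ is transcendental over $\C$), so $(Z_{\overline{\C(t)}})_{\sigma}$ need not be $Z$, and the exterior product maps do not match up. This is precisely why the paper runs the reduction in the opposite order: it first descends $Z$ to a \emph{countable} algebraically closed field $\kappa\subset\C$ with $A^{j}(Z_{\kappa})\cong A^{j}(Z)$ and $E^{j}_{n}(Z_{\kappa})\cong E^{j}_{n}(Z)$ (possible because Chow groups modulo algebraic equivalence are countable), constructs the degeneration of Theorem \ref{thm:extend-Brauer-class} over $\kappa[[t]]$, and then uses that for countable $\kappa$ the field $k=\overline{\kappa((t))}$ embeds into $\C$ \emph{over} $\kappa$. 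The complex surface is then $S:=S_{\bar{\eta}}\times_{k}\C$ --- an extension of scalars of $S_{\bar{\eta}}$, not a specialization --- and the correctly oriented injectivity $A^{j+1}(S_{\bar{\eta}}\times_{k}Z_{k})/n\to A^{j+1}(S\times Z)/n$ concludes. Your sketch could likely be repaired along these lines (descend $Z$ to a countable field before choosing $b$ and a field isomorphism fixing that field), but as written the passage to a very general $b\in U(\C)$ is not justified.
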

    
\begin{proof} We argue as in the proof of \cite[Theorem 1.3]{Sch-griffiths}. We may find a countable algebraically closed field $\kappa\subset\C$, such that $Z=Z_{\kappa}\times_{\kappa}\C$ for some smooth projective variety $Z_{\kappa}$ over $\kappa$. By
the rigidity theorem of Lecomte \cite{lec}, the base change map
\begin{align*}
    \CH^{j}(Z_{\kappa})/n\longrightarrow \CH^{j}(Z)/n\end{align*}
is an isomorphism.
\par Fix an algebraic closure $k$ of the fraction field $\kappa((t))$ and let $\mathcal{S}\to \Spec
\kappa[[t]]$
be the degeneration of Theorem \ref{thm:extend-Brauer-class}. By Corollary \ref{cor:injectivity-exterior-product} the exterior product map \eqref{eq:exterior-product} $$\CH^{j}(Z_{\kappa})/n\longrightarrow \CH^{j+1}(S_{\bar{\eta}}\times_{k}Z_{k})/n$$ is then injective. As the field $\kappa$ is countable, the extension $\kappa\subset\C$ factors
through $k\subset\C$. As before by \cite{lec}, we have that the base change map $$\CH^{j+1}(S_{\bar{\eta}}\times_{k}Z_{k})/n\cong \CH^{j+1}(S\times_{\C}Z)/n$$ is an isomorphism,
where $S=S_{\bar{\eta}}\times_{k}\C$. Combining the above, we finally see that the exterior product map \eqref{ext-prod-A^{i}}
$$\CH^{j}(Z)/n\cong \CH^{j}(Z_{\kappa})/n\longrightarrow \CH^{j+1}(S\times Z)/n$$
is injective. The proof is complete.\end{proof}

Let $X$ be a complex smooth projective variety. Recall that we have a transcendental Abel--Jacobi map (see \cite[Theorem 1.6]{Sch-refined}) on torsion cycles 
\begin{align}\label{Abel-Jacobi-map} \lambda^{i}_{tr}\colon \Griff^{i}(X)_{\tors}\longrightarrow H^{2i-1}(X,\Q/\Z(i))/N^{i-1}H^{2i-1}(X,\Q(i)),\end{align} where $N^{*}$ denotes Grothendieck's coniveau filtration. By \cite[Proposition 8.5]{Sch-refined}, it coincides with Griffiths transcendental Abel--Jacobi map (see \cite{griffiths}) on torsion cycles. We let $$\mathcal{T}^{i}(X):=\ker(\lambda^{i}_{tr}).$$

\begin{corollary}\label{cor:trivial-abel-jacobi-invariant}In the setting of Theorem \ref{thm:injectivity-result-2}, there is a canonical injection $$E_{n}^{j}(Z)\ \hooklongrightarrow\mathcal{T}^{j+1}(S\times Z)[n].$$\end{corollary}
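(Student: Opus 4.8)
The plan is to show that the exterior product map of Theorem~\ref{thm:injectivity-result-2}, restricted to $E^{j}_{n}(Z)\subseteq A^{j}(Z)/n$, factors through $\mathcal{T}^{j+1}(S\times Z)[n]$ (for any fixed generator $\mathcal{L}$ of $\Pic(S)_{\tors}\cong\Z/n$). First I would check that $[z]\mapsto[\mathcal{L}\times z]$ is well defined: since $\mathcal{L}$ is a torsion line bundle, $n\mathcal{L}=0$ in $\Pic(S)$, so $\mathcal{L}\times(nc)=(n\mathcal{L})\times c=0$ in $\CH^{j+1}(S\times Z)$ for every $c\in\CH^{j}(Z)$, and $\mathcal{L}\times(\text{algebraically trivial})$ stays algebraically trivial; hence $[z]\mapsto[\mathcal{L}\times z]$ defines a homomorphism $A^{j}(Z)/n\to A^{j+1}(S\times Z)$ whose image is killed by $n$, and I restrict it to $E^{j}_{n}(Z)$.

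Next I would show $\mathcal{L}\times z$ is nullhomologous. Exactly as in the proof of Theorem~\ref{thm:injectivity-result-2}, $\cl^{j+1}_{S\times Z}(\mathcal{L}\times z)=p^{*}\cl^{1}_{S}(\mathcal{L})\cup q^{*}\cl^{j}_{Z}(z)$ in $H^{2j+2}(S\times Z,\Z_{\ell}(j+1))$ for every prime $\ell$, where $p,q$ are the two projections. If $\ell\nmid n$ then $\cl^{1}_{S}(\mathcal{L})=0$, being an $n$-torsion element of a $\Z_{\ell}$-module. If $\ell\mid n$, write $n=\ell^{r_{\ell}}m$ with $\ell\nmid m$; since $[z]\in E^{j}_{n}(Z)$, the reduction of $\cl^{j}_{Z}(z)$ in $H^{2j}(Z,\mu_{\ell^{r_{\ell}}}^{\otimes j})$ vanishes, so by the Bockstein sequence $q^{*}\cl^{j}_{Z}(z)=\ell^{r_{\ell}}\eta$ for some $\eta$, and as $\ell^{r_{\ell}}\cl^{1}_{S}(\mathcal{L})=0$ we get $p^{*}\cl^{1}_{S}(\mathcal{L})\cup q^{*}\cl^{j}_{Z}(z)=\bigl(\ell^{r_{\ell}}p^{*}\cl^{1}_{S}(\mathcal{L})\bigr)\cup\eta=0$. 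Thus $\cl^{j+1}_{S\times Z}(\mathcal{L}\times z)=0$ for all $\ell$, so the singular cohomology class of $\mathcal{L}\times z$ (which lies in a finitely generated group, hence injects into the product of its $\ell$-adic completions) vanishes; combined with the previous paragraph this places $[\mathcal{L}\times z]\in\Griff^{j+1}(S\times Z)[n]$.

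The main point is to see that $\lambda^{j+1}_{tr}(\mathcal{L}\times z)=0$. Here I would use that, by \cite[Proposition~8.5]{Sch-refined}, $\lambda^{j+1}_{tr}$ agrees on torsion cycles with Griffiths' transcendental Abel--Jacobi map, equivalently with Bloch's map $\lambda_{n}$ on $n$-torsion cycles (with values in $H^{2i-1}(-,\mu_{n}^{\otimes i})$) followed by the inclusion $\mu_{n}^{\otimes i}\hookrightarrow\Q/\Z(i)$ and the projection onto the quotient by $N^{i-1}H^{2i-1}(-,\Q(i))$. The relevant input is the compatibility of $\lambda_{n}$ with exterior products: if $w_{1}\in\CH^{i_{1}}(X_{1})$ is $n$-torsion and $w_{2}\in\CH^{i_{2}}(X_{2})$ is arbitrary, then --- writing $nw_{1}=\partial\Gamma_{1}$ in the Bloch--Ogus/Gersten resolution and using $n(w_{1}\times w_{2})=\partial(\Gamma_{1}\times w_{2})$ --- one gets $\lambda_{n}(w_{1}\times w_{2})=p_{1}^{*}\lambda_{n}(w_{1})\cup p_{2}^{*}\bigl(\cl^{i_{2}}_{X_{2}}(w_{2})\bmod n\bigr)$ in $H^{2(i_{1}+i_{2})-1}(X_{1}\times X_{2},\mu_{n}^{\otimes(i_{1}+i_{2})})$. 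Applying this with $w_{1}=\mathcal{L}$ (so $\lambda_{n}(\mathcal{L})\in H^{1}(S,\mu_{n})$ is the Kummer class of $\mathcal{L}$) and $w_{2}=z$, and using that $\cl^{j}_{Z}(z)\bmod n=0$ precisely because $[z]\in E^{j}_{n}(Z)$, I conclude $\lambda_{n}(\mathcal{L}\times z)=0$, hence $\lambda^{j+1}_{tr}(\mathcal{L}\times z)=0$ and $[\mathcal{L}\times z]\in\mathcal{T}^{j+1}(S\times Z)[n]$.

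Injectivity is then immediate: composing the resulting map $E^{j}_{n}(Z)\to\mathcal{T}^{j+1}(S\times Z)[n]$ with the inclusion $\mathcal{T}^{j+1}(S\times Z)[n]\hookrightarrow A^{j+1}(S\times Z)$ and the quotient $A^{j+1}(S\times Z)\twoheadrightarrow A^{j+1}(S\times Z)/n$ recovers the restriction to $E^{j}_{n}(Z)$ of the exterior product map of Theorem~\ref{thm:injectivity-result-2}, which is injective; hence so is the first arrow. I expect the only genuinely delicate point to be the product compatibility used in the third paragraph, i.e.\ making rigorous the effect on the transcendental Abel--Jacobi invariant of an exterior product by a torsion cycle (here $\mathcal{L}$) that is itself not homologically trivial; the remaining steps are formal or parallel the proof of Theorem~\ref{thm:injectivity-result-2}.
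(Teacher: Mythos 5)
Your proposal is correct and follows essentially the same route as the paper: reduce to showing $\cl^{j+1}_{S\times Z}(\mathcal{L}\times z)=0$ and $\lambda^{j+1}_{tr}(\mathcal{L}\times z)=0$ via the product formulas $p^{*}\cl^{1}_{S}(\mathcal{L})\cup q^{*}\cl^{j}_{Z}(z)$ and $p^{*}\lambda^{1}_{tr}(\mathcal{L})\cup q^{*}\overline{\cl^{j}_{Z}(z)}$, with both vanishing because $[z]\in E^{j}_{n}(Z)$. The ``delicate point'' you flag (compatibility of the transcendental Abel--Jacobi/Bloch map with exterior product by a torsion cycle) is exactly the content of \cite[Lemma 3.6]{th-sch}, which the paper simply cites rather than reproves.
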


\begin{proof} By Theorem \ref{thm:injectivity-result-2}, we have an injection 
\begin{align}\label{c-i}
            E_{n}^{j}(Z)\ \hooklongrightarrow A^{j+1}(S\times Z)[n],\ [z]\mapsto[\mathcal{L}\times z],\end{align} where we recall that $A^{j}(X)=\CH^{j}(X)/\sim_{\alg}$ is the Chow group modulo algebraic equivalence. We claim that this map factors through $\mathcal{T}^{j+1}(S\times Z)[n]$. Let $p\colon S\times Z\to S$ and $q\colon S\times Z \to Z$ be the two projections. Note that for any $[z]\in E_{n}^{j}(Z)$, the class $$\cl^{j+1}_{S\times Z}([\mathcal{L}\times z])=p^{*}\cl^{1}_{S}([\mathcal{L}])\cup q^{*}\cl^{j}_{Z}([z])$$ is zero, since $\mathcal{L}$ is annihilated by $n$, whereas $\cl^{j}_{Z}([z])$ is divisible by $n$. It follows that the image of \eqref{c-i} is contained in $\Griff^{j+1}(S\times Z)[n]$. The formula (cf. \cite[Lemma 3.6]{th-sch})$$\lambda^{j+1}_{tr}([\mathcal{L}\times z])=p^{*}\lambda^{1}([\mathcal{L}])\cup q^{*}\overline{\cl^{j}_{Z}([z])},$$ where $\lambda^{1}$ is the isomorphism $\CH^1(S)_{\tors} \cong H^{1}(S_{\et},\Q/\Z)$ induced from the Kummer sequence
(see \cite[$\S$3.2.1]{CS-brauer}) and where $\overline{\cl^{j}_{Z}([z])}\in H^{2j}(Z,\Q/\Z(j))$ denotes the image of the reduction modulo $n$ of $\cl^{j}_{Z}([z])$ via the natural map $H^{2j}(Z,\Z/n(j))\to H^{2j}(Z,\Q/\Z(j))$ implies that $[\mathcal{L}\times z]\in\mathcal{T}^{j+1}(S\times Z)[n]$ for all $[z]\in E_{n}^{j}(Z)$. This concludes the proof.\end{proof}

\begin{proof}[Proof of Theorem \ref{thm:injectivity-result}] By Theorem \ref{thm:injectivity-result-2}, and the proof of Corollary \ref{cor:trivial-abel-jacobi-invariant}, we have a canonical injection $$E_{n}^{j}(Z)\ \hooklongrightarrow\frac{\Griff^{j+1}(S\times Z)[n]}{n\Griff^{j+1}(S\times Z)[n^{2}]},\ [z]\mapsto[\mathcal{L}\times z].$$ Moreover, note that $\Griff^{j}(Z)/n\subset E_{n}^{j}(Z)$ (see \cite[Corollary 7.12 and Lemma 7.13]{Sch-refined}) and so the injectivity of \eqref{ext-prod-griffiths} follows.\end{proof}

        \begin{proof}[Proof of Theorem \ref{thm:ell-torsion-griffiths}] Let $C\subset\mathbb{P}^{2}_{\C}$ be a smooth quartic curve and write $JC$ for its Jacobian variety. By Theorem \ref{thm:injectivity-result}, we have a canonical injection $$\Griff^{2}(JC)/n\ \hooklongrightarrow \frac{\Griff^{3}(S\times JC)[n]}{n\Griff^{3}(S\times JC)[n^{2}]},\ [z]\mapsto[\mathcal{L}\times z],$$ where $0\neq\mathcal{L}\in\Pic(S)_{\tors}\cong\Z/n$ is any generator. The result (where we take $X:=S\times JC$) thus follows from Theorem \ref{thm:infinite-modulo-n}, since we can choose $C$, such that $\Griff^{2}(JC)/n$ contains infinitely many elements of order $n$.\end{proof}
        
        \begin{remark} Note that in Theorem \ref{thm:ell-torsion-griffiths}, the infinitely many torsion elements in $\Griff^{3}(X)$ have all trivial Abel-Jacobi invariant, by Corollary \ref{cor:trivial-abel-jacobi-invariant}.\end{remark}
        
        \begin{proof}[Proof of Corollary \ref{cor:ell-torsion-griffiths}] Fix an integer $m\geq 5$. Then the result is an immediate consequence of the projective bundle formula (see \cite[Theorem 3.3]{fulton}) applied to the smooth complex projective variety $X:=S\times JC\times\mathbb{P}^{m-5}_{\C}$, where $C$ and $S$ are as in the proof of Theorem \ref{thm:ell-torsion-griffiths}.\end{proof}
        
    

 \section*{Acknowledgements}
 The author thanks his doctoral supervisor Stefan Schreieder for introducing him to this problem and for answering his questions, as well as for helpful comments during the preparation of the paper. This project has received funding from the European Research Council (ERC) under the European Union's Horizon 2020 research and innovation program under grant agreement No 948066 (ERC-StG RationAlgic).
 

\end{document}